\documentclass[11pt]{amsart}
\usepackage[square,numbers,sort&compress]{natbib}
\usepackage{graphicx}

\usepackage[utf8]{inputenc}
\usepackage{amsmath, amsthm, amssymb}
\usepackage[all]{xy}
\usepackage{accents}
\usepackage[mathscr]{euscript}
\usepackage{braket}
\usepackage{url}
\numberwithin{equation}{section}
\newtheorem{lemma}{Lemma}[section]
\newtheorem{theorem}[lemma]{Theorem}%[section]
\newtheorem{conjecture}[lemma]{Conjecture}%[section]
\newtheorem{corollary}[lemma]{Corollary}%[section]
\newtheorem{proposition}[lemma]{Proposition}%[section]
\theoremstyle{definition}
%[section]
\newtheorem{remark}[lemma]{Remark}%[section]
\newtheorem{definition}[lemma]{Definition}%[section] 

\def\tilde{\widetilde}
\def\hat{\widehat}
\def\Sp{{\mathrm{Sp}}}

\def\GL{{\mathrm{GL}}}

\def\Sym{{\mathrm{Sym}}}
\DeclareMathOperator{\Ad}{Ad}
\DeclareMathOperator{\Hilb}{Hilb}

\DeclareMathOperator{\PHilb}{PHilb}
\DeclareMathOperator{\Spec}{Spec}

\DeclareMathOperator{\Frac}{Frac}

\DeclareMathOperator{\id}{\mathrm{id}}

\DeclareMathOperator{\Lie}{Lie}

\DeclareMathOperator{\Fl}{Fl}
\DeclareMathOperator{\Gr}{Gr}

\DeclareMathOperator{\diag}{\mathrm{diag}}
\DeclareMathOperator{\gr}{gr}

\newcommand{\bPic}{\overline{\mathrm{Pic}}}

\newcommand{\tM}{\widetilde{M}}
\newcommand{\cO}{\mathcal{O}}
\newcommand{\cP}{\mathcal{P}}
\newcommand{\cL}{\mathcal{L}}
\newcommand{\cF}{\mathcal{F}}
\newcommand{\cA}{\mathcal{A}}
\newcommand{\cR}{\mathcal{R}}
\newcommand{\cH}{\mathcal{H}}
\newcommand{\cM}{\mathcal{M}}
\newcommand{\cC}{\mathcal{C}}
\newcommand{\cI}{\mathcal{I}}
\newcommand{\cJ}{\mathcal{J}}
\newcommand{\cN}{\mathcal{N}}
\newcommand{\cT}{\mathcal{T}}
\newcommand{\cV}{\mathcal{V}}
\newcommand{\cK}{\mathcal{K}}

\newcommand{\fs}{\mathfrak{s}}
\newcommand{\fS}{\mathfrak{S}}

\newcommand{\hC}{\widehat{C}}

\newcommand{\fl}{\mathfrak{l}}

\newcommand{\fsl}{\mathfrak{sl}}
\newcommand{\fg}{\mathfrak{g}}
\newcommand{\ft}{\mathfrak{t}}
\newcommand{\tT}{\widetilde{T}}

\newcommand{\bI}{\mathbf{I}}
\newcommand{\bP}{\mathbf{P}}
\newcommand{\tbP}{\widetilde{\mathbf{P}}}
\newcommand{\bN}{\mathbf{N}}

\newcommand{\Sn}{\mathfrak{S}_n}

\newcommand{\C}{{\mathbb C}}

\newcommand{\A}{\mathbb{A}}
\newcommand{\tG}{\widetilde{G}}
\newcommand{\G}{\mathbb{G}}
\newcommand{\Z}{{\mathbb Z}}
\newcommand{\Q}{{\mathbb Q}}
\newcommand{\cW}{\mathcal{W}}
\newcommand{\F}{{\mathbb F}}

\newcommand{\Hom}{\mathrm{Hom}}

\renewcommand{\P}{\mathbb{P}}

\graphicspath{{./figures/}}

\begin{document}
\title[GASF and Hilbert schemes]{Generalized affine Springer theory and Hilbert schemes on planar curves}
\author{Niklas Garner}
\address{Department of Physics, University of Washington}
\author{Oscar Kivinen}
\address{Department of Mathematics, EPFL}
\begin{abstract}
We show that Hilbert schemes of planar curve singularities and their parabolic variants can be interpreted as certain generalized affine Springer fibers for $GL_n$, as defined by Goresky-Kottwitz-MacPherson. Using a generalization of affine Springer theory for Braverman-Finkelberg-Nakajima's Coulomb branch algebras, we construct a rational Cherednik algebra action on the homology of the Hilbert schemes, and compute it in examples. Along the way, we generalize to the parahoric setting the recent construction of Hilburn-Kamnitzer-Weekes, which may be of independent interest. In the spherical case, we make our computations explicit through a new general localization formula for Coulomb branches. Via results of Hogancamp-Mellit, we also show the rational Cherednik algebra acts on the HOMFLY-PT homologies of torus knots. This work was inspired in part by a construction in three-dimensional $\cN=4$ gauge theory.
\end{abstract}
\maketitle
\section{Introduction}
Let $\hC$ be the germ of a (possibly non-reduced and reducible) complex plane curve singularity and write $\hC = \Spec \C[[x,t]]/(f)$ for any nonzero $f \in \C[x][[t]]$ with $f(0,0) = 0$, whose closed points correspond to the vanishing set $\{f(x,t) = 0\}$. In this paper, we investigate a relationship between the Hilbert scheme of points on $\hC$  (plus its parabolic flag versions) and certain generalized affine Springer fibers in the sense of \cite{GKM}.

\begin{remark}
By the Weierstrass preparation theorem, there is no loss of generality in assuming that $f$ has finite degree in $x$. By the local constancy of classical affine Springer fibers \cite[Proposition 3.5.1.]{Ngo}, we may even assume $f(x,t)\in \C[x,t]$ is a polynomial of both $x$ and $t$, though its degree in terms of $t$ may be very large.
\end{remark}

The Hilbert schemes of points on singular curves have been objects of intense study due to their connections to a wide range of topics including knot theory \cite{ORS, GORS}, representation theory \cite{GORS, Nak, Kiv1, OY, EGL}, and curve counting \cite{PT, Pand}.
Affine Springer fibers, and their various generalizations, have also seen a wide range of study in combinatorics \cite{Hik}, geometry \cite{Ngo, LS}, number theory \cite{Ngo, YunPCMI}, and representation theory \cite{OY,VV}.
\subsection{Hilbert schemes and affine Springer fibers}
We now describe our approach in some detail. Recall that a starting point for Ng\^o's proof of the fundamental lemma was an identification of the compactified Picard scheme for a singular locally planar curve with a classical affine Springer fiber for $GL_n$ (see Definition \ref{def:genzdasf}) \cite{LaumonNgo, MY}. This identification realizes a rank 1 torsion-free module over $R:=\C[[x,t]]/(f)$ as a lattice in the total ring of fractions $\Frac(R)$.

When $f$ is an \emph{irreducible} polynomial, the corresponding classical affine Springer fiber for $SL_n$ can further be related to the compactified Jacobian of the singularity. When the compactified Jacobian is of finite type and admits a $\C^\times$ action, one can construct a Springer-like action of the rational Cherednik algebra (of $\mathfrak{sl}_n$) on its equivariant cohomology using a perverse filtration \cite{OY}. The work of Maulik-Yun and Migliorini-Shende \cite{MY,MS} shows that this perverse filtration arises from the Hilbert scheme of points on the curve via an Abel-Jacobi map.

We take the relation between affine Springer theory of $GL_n$ and Hilbert schemes further by interpreting (flags of) ideals of $R:=\C[[x,t]]/(f)$ as (flags of) lattices in $\Frac(R)$ contained in the standard lattice. These moduli spaces of lattices also have a realization as {\em generalized} affine Springer fibers in the sense of \cite{GKM}. 

Let us recall some basic facts about generalized affine Springer fibers. Choose a reductive group $G$ and let $\cK = \C((t))$. The data of a generalized affine Springer fiber includes a choice of a representation $N \in \text{Rep}(G)$, a parahoric subgroup $\bP \subset G_\cK$, a lattice $\bN \subset N_\cK$ stable under $\bP$, and a vector $v \in N_\cK$. More precisely, they are the reduced fibers of the map 
$$G_\cK\times_{\bP} \bN\to N_\cK$$ given by $(g,n)\mapsto g^{-1}.n$ and can be thought of as affine generalizations of Hessenberg varieties. Note that the generalized affine Springer $M_v$ is naturally identified with a sub(ind-)scheme of the partial affine flag variety $G_\cK / \bP$.\footnote{A more precise notation for the generalized affine Springer fiber $M_v$ would include the choices of the reductive group $G$, the parahoric subgroup $\bP$, and the lattice $\bN$. We suppress this dependence to avoid overburdening the notation.} Our first main result (Theorem \ref{thm:maincomparison} in the main text) is the following.

\begin{theorem}
	\label{introthm:maincomparison}
	Let $\hC:=\Spec R$ be a germ of a plane curve singularity and write $R=\C[[x,t]]/(f)$. If $f$ has $x$-degree $n$ then there is a generalized $\Ad\oplus V$-affine Springer fiber $M_v\subset \Gr_{GL_n}$ so that there is an isomorphism of (ind-)schemes $$\varphi: M_v\to \Hilb^\bullet(\hC),$$ where $v = (\gamma, e_n)$ for $\gamma$ the companion matrix of $f$ and $e_n$ the $n$-th standard basis vector of $V = \C^n$.
\end{theorem}

In the statement of this theorem and the remainder of the paper, $V$ is the vector representation of $GL_n$. We only stated the above theorem in the spherical case to keep the introduction more readable, but prove a more general form in the main body of the text. This generalization shows that parabolic flag Hilbert schemes, see e.g. \cite{GSV}, and the incidence varieties of \cite{ORS}, defined in terms of flag Hilbert schemes of $\hC$, also have natural interpretations as generalized affine Springer fibers.

\begin{remark}
	The proof of Theorem \ref{thm:maincomparison} does not require that the curve $\hC$ is reduced or irreducible. In particular, this construction yields generalized affine Springer fibers realizing the Hilbert scheme of points on non-reduced curves.
\end{remark}

\begin{remark}
While it would be tempting to interpret {\em all} generalized affine Springer fibers for $N=\Ad\oplus V$ as variants of Hilbert schemes of points, a moment's thought shows that this is not possible. For example, if the summand $e$ of $v=(\gamma,e)\in (\Ad\oplus V)_\cK$ is just $e=0$, we recover ordinary affine Springer fibers. In general, the proof of the Theorem shows that the lattice generated by $e, \gamma e, \gamma^2 e,\ldots,\gamma^n e$ should equal the standard lattice $\cO^n$ in order to recover a generalized affine Springer fiber $M_v$ isomorphic to a (parabolic) Hilbert scheme. Note that this is a stronger condition than merely requiring $e$ to be a cyclic vector. Note also the similarity of this construction to the GIT construction of $\Hilb^n(\C^2)$ \cite{Nak}, in which we quotient out the stable locus $\{(X,Y,v)\in \Ad(\C)^2\oplus \C^n|[X,Y]=0, \C[X,Y]v=\C^n\}$ by the natural $GL_n$-action. We do not know whether there is a GIT-style interpretation of Theorem \ref{introthm:maincomparison}.
\end{remark}

\subsection{Generalized affine Springer theory}
The (co)homologies of the classical affine Springer fibers admit an action of the trigonometric double affine Hecke algebra, at least in the ``homogeneous cases" \cite{OY,VV}, similar to the classical Springer action of the graded affine Hecke algebra on the (co)homologies of Springer fibers. Therefore, it is natural to expect that there is a Springer-type action of some algebra on the homologies of the generalized affine Springer fibers as well, for arbitrary $(G,N)$ (see \cite[Remark 3.9.(4)]{BFN}), and in particular in this case of Hilbert schemes of points.

This turns out to be the case, as recently explored by Hilburn-Kamnitzer-Weekes \cite{HKW} in the spherical case. The algebras in question turn out to be (quantizations of) the ring of functions on the {\em Coulomb branch} of a corresponding three-dimensional $\cN=4$ gauge theory, or simply the {\em Coulomb branch algebra}, as mathematically defined in \cite{BFN} by a convolution algebra construction modeled on the affine Grassmannian (and in our case, other partial affine flag varieties), generalizing the work of \cite{BFM} for $N=0$. We generalize the results of \cite{HKW} to their natural maximum, allowing in particular for generalized affine Springer fibers in any partial affine flag variety. Combining our construction with the results of \cite{HKW} will give us more insight into the nature of Springer representations of various algebras arising as Coulomb branches.

It was shown by Kodera-Nakajima \cite{KN} that the Coulomb branch algebra, for the datum $G = GL_n$ and $N = \Ad\oplus V$, where $\Ad$ is the adjoint representation and $V$ is the vector representation, is isomorphic to the spherical rational Cherednik algebra of $\fg\fl_n$. In addition, \cite{WebDO, BEF} prove that the Iwahori version of the Coulomb branch in question is naturally isomorphic to the full rational Cherednik algebra. See Theorem \ref{thm:rca} and subsequent discussion for the precise statements.

Combining the above ingredients, we find an action of the spherical rational Cherednik algebra of $\fg\fl_n$ on the equivariant Borel-Moore homology of $\Hilb^\bullet(\hC)$ as a type of ``generalized affine Springer theory" similar to the orbital variety version in \cite[Section 6.5.]{CG}. The Iwahori generalization of this yields an action of the full rational Cherednik algebra on the parabolic flag Hilbert schemes $\PHilb^{[\bullet,\bullet+(1,\ldots,1)]}(\hC)$. More precisely, subject to a mild constraint on the stabilizer $L_v$ of $v$ (see Theorem \ref{thm:convolution} for more details), we arrive at the following result.

\begin{proposition}
	The rational Cherednik algebra $\cH_n$ of $\fg\fl_n$ acts on $$\bigoplus_{m\geq 0} H_*^{L_v}(\PHilb^{[m,m+(1,\ldots,1)]}(\hC))$$ and the spherical rational Cherednik algebra $e\cH_n e$ acts on $$\bigoplus_{m\geq 0} H_*^{L_v}(\Hilb^m(\hC))$$ via a natural convolution product. 
\end{proposition}

This fits well with the results of \cite{GORS,ORS,EGL,OY,GSV}, see e.g. Section \ref{sec:gsv}, where we compare our calculations with the recent results of Gorsky-Simental-Vazirani.

For the case where the plane curve singularity $\hC = \hC_{n,k}$ is quasi-homogeneous and given by $f=x^n-t^k$, we find the above actions with parameter $m = -\tfrac{k}{n} \hbar$ (to match with most conventions, we must specialize $\hbar \to -1$) on the equivariant Borel-Moore homology with respect to the stabilizer $L_v \cong \C^\times\subset \C^\times_{rot}\times \C^\times_{dil}$ of a specific element $v\in N_\cO$, realizing an expectation of \cite{ORS}. When $\gcd(n,k)=1$, the Hilbert scheme of points $\Hilb^\bullet(\hC_{n,k})$ has isolated $\C^\times$-fixed points and we can take the analysis quite far. We compute the action in the basis of fixed points by means of an ``abelianization procedure" akin to \cite{BDGH,BFNSlices,lineops} in some cases. 
\begin{remark}
	This abelianization rests on the rather general localization formula in Proposition \ref{prop:torusloc}. This general formula may be of independent interest.
	
	For example, the work \cite{HKW} identifies a certain generalized affine (or BFN) Springer fiber with the central fiber of the small resolution $\cL_n \to \mathcal{Z}_n$ of Drinfeld's Zastava space $\mathcal{Z}_n$ by Laumon's space $\cL_n$. Our localization formula could serve as a way of explicitly relating the ``BFN Springer action" on homologies of spaces of quasimaps and the $\mathcal{U}_\hbar \fg\fl_n$-action of Feigin-Finkelberg-Frenkel-Rybnikov on $\cL_n$ \cite{FFFR}.
\end{remark}
In \cite{GORS}, Gorsky-Oblomkov-Rasmussen-Shende (GORS) suggested a connection between the homology of the Hilbert scheme in the quasi-homogeneous case and irreducible representations of the spherical Cherednik algebra. Using the above proposition, we verify an isomorphism between these representations:
\begin{theorem}[Theorem \ref{thm:glnirrep}]
	\label{thm:fdirrep}
	When $\gcd{(n,k)} = 1$, we have $$H^{\C^{\times}}_*(\Hilb^\bullet(\hC_{n,k})) \simeq eL_{k/n}(\textrm{triv})$$ as modules for the spherical rational Cherednik algebra of $\fg\fl_n$.
\end{theorem}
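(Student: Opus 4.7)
The plan is to combine the spherical rational Cherednik algebra action on $M := H^{\C^\times}_*(\Hilb^\bullet(\hC_{n,k}))$ constructed earlier in the paper --- via the realization of the Hilbert scheme as the generalized $\mathrm{Ad}\oplus V$ affine Springer fiber $M_v$ (Theorem \ref{introthm:maincomparison}) together with the Coulomb branch identification of Theorem \ref{thm:rca} --- with the known representation theory of rational Cherednik algebras at admissible negative rational parameters, in order to identify $M$ on the nose.

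When $\gcd(n,k)=1$, the parameter $m = -k/n$ is admissible: the spherical Verma module $e\Delta_{-k/n}(\mathrm{triv})$ has a unique finite-dimensional irreducible quotient $eL_{-k/n}(\mathrm{triv})$, of dimension equal to the rational Catalan number $\frac{1}{n+k}\binom{n+k}{n}$ (Berest--Etingof--Ginzburg). On the geometric side, for $\gcd(n,k)=1$ the $\C^\times$-fixed points on $\Hilb^\bullet(\hC_{n,k})$ are isolated and biject with $(n,k)$-invariant monomial ideals, of which there are exactly the same number. Equivariant localization therefore gives $\dim M = \dim eL_{-k/n}(\mathrm{triv})$.

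To upgrade this equality of dimensions to an isomorphism of modules, I would single out the fundamental class $v_0$ of the distinguished fixed point $R \in \Hilb^\bullet(\hC_{n,k})$ (the trivial lattice) and show (a) $v_0$ is a lowest weight vector for the Euler grading on the Cherednik algebra and is annihilated by the spherical analogues of the lowering operators $y_i$, and (b) $v_0$ generates $M$ as a module over the spherical RCA. Together (a) and (b) exhibit $M$ as a nonzero cyclic quotient of $e\Delta_{-k/n}(\mathrm{triv})$; being finite-dimensional of matching dimension, it must coincide with the simple quotient $eL_{-k/n}(\mathrm{triv})$.

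The main obstacle is property (a) --- the vanishing of the lowering operators on $v_0$. These operators arise as BFN convolution cycles on $\Gr_{GL_n}$, and the required vanishing amounts to a local geometric statement about the correspondences near $R$ in $M_v$. I expect to establish it via the fixed-point abelianization procedure referenced in the excerpt: after localizing to the torus fixed points, the Coulomb branch generators become explicit rational functions in the equivariant parameters, and the required annihilations reduce to a finite combinatorial check at $v_0$. Cyclicity (b) should then be essentially automatic from the admissibility of $m = -k/n$, since at this parameter $e\Delta_m(\mathrm{triv})$ has a unique proper submodule and $M$ already has the correct dimension to be its simple quotient; alternatively, one can argue cyclicity directly from the grading, since the spherical RCA acts by operators of bounded negative degree and $v_0$ lies in the top-degree component.
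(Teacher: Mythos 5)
Your outline follows the same broad strategy as the paper --- use the abelianized fixed-point formulas to locate a singular vector and then count --- but the counting step contains a genuine error. The $\C^\times$-fixed points of $\Hilb^\bullet(\hC_{n,k})$ are the finite-colength monomial ideals of $R$, and there are infinitely many of them (by Proposition \ref{prop:fixedpoints} they are the cocharacters $A$ with $0\le A_1\le\cdots\le A_n\le A_1+k$); correspondingly both $H^{\C^\times}_*(\Hilb^\bullet(\hC_{n,k}))$ and $eL_{-k/n}(\textrm{triv})$ for $\fg\fl_n$ are infinite-dimensional. The rational Catalan number $\tfrac{1}{n+k}\binom{n+k}{n}$ counts the fixed points of the \emph{compactified Jacobian}, equivalently the dimension of the finite-dimensional simple module for the spherical RCA of $\fs\fl_n$ --- not of $\fg\fl_n$. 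So the assertion that localization gives $\dim M=\dim eL_{-k/n}(\textrm{triv})$, and the concluding step ``being finite-dimensional of matching dimension, it must coincide with the simple quotient,'' fail as stated. The repair, which is what the paper does in the proof of Theorem \ref{thm:glnirrep}, is to split off the Weyl algebra generated by $X=[\cR_{\leq(1,0,\ldots,0)}]$ and $Y=[\cR_{\leq(-1,0,\ldots,0)}]$ (which satisfy $[X,Y]=n\hbar$ by \cite{KN}), write the module as $\C[X]\otimes M'$ with $M'$ a module for the spherical RCA of $\fs\fl_n$, and verify $\dim_\C M'=\tfrac1n\binom{n+k-1}{n-1}$ by multiplying the graded Euler characteristic $\frac{1}{1-q^n}\left[\begin{smallmatrix}n-1+k\\ n-1\end{smallmatrix}\right]_q$ by $1-q$ and setting $q=1$; one then invokes uniqueness of the finite-dimensional simple module for the spherical RCA of $\fs\fl_n$ at parameter $-k/n$.

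A second, smaller issue is your step (b). Showing that $v_0=\ket{0}$ is annihilated by the lowering operators only produces a map from $e\Delta_{-k/n}(\textrm{triv})$ onto the submodule generated by $v_0$; it does not show that $v_0$ generates $M$, and your grading heuristic is inverted: $v_0$ is the class of $\Hilb^0$, which sits in the \emph{lowest} degree, and the raising operators must reach infinitely many higher degrees. What the paper actually proves (Proposition \ref{prop:module}) is the stronger statement that $\ket{0}$ is the \emph{unique} singular vector --- the joint kernel of all the $F_r[f]$ is one-dimensional, established inductively from the explicit formula \eqref{eq:SCAaction} using coprimality of $n$ and $k$ --- and it is this uniqueness, rather than mere existence, that yields irreducibility and pins down the module. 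Your proposed ``finite combinatorial check at $v_0$'' should therefore be upgraded to a computation of the full joint kernel of the lowering operators on all fixed-point classes.
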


\begin{remark}
	For the case of $(2,2\ell+1)$ torus knots we show this directly, see Section \ref{sec:SCA2}. For the remaining cases the direct analysis becomes cumbersome, so we resort to a dimension computation to conclude the result. It is however remarkable that our approach is, in principle, amenable to completely explicit computation, when compared with e.g. \cite{OY}. We also note that Theorem \ref{thm:fdirrep} is compatible with the earlier results and conjectures of \cite{VV, OY, ORS, GORS} relating modules for the spherical rational Cherednik algebra and $\Hilb^\bullet(\hC_{n,k})$.
\end{remark}
\subsection{HOMFLY-PT homology of torus knots}
The links of the quasi-homogeneous ($\hC_{n,k}=\{x^n=t^k\}$) singularities are the (positive) $(n,k)$-torus links, and it has been known for a while that the representations constructed above are closely connected with corresponding ``lowest $a$-degree parts" of the HOMFLY-PT homologies of these links. In particular, our approach combined with recent results of Hogancamp-Mellit \cite{HM} (and the older philosophies of Gorsky-Oblomkov-Rasmussen-Shende \cite{GORS,ORS}) quite directly shows the fact that the rational Cherednik algebra of $\fg\fl_n$ acts on these link homologies, {\em par transport de structure}. This is the subject of Section \ref{sec:toruslinks}. 
\begin{remark}
	The higher $a$-degrees also have natural interpretations from the parahoric viewpoint, and the full Iwahori invariant, \emph{i.e.} considering full flags of ideals, is likely related to the annular invariant introduced in Trinh's thesis \cite[Definition 1.7.8]{Trinh}. Conditioned on the Oblomkov-Rasmussen-Shende (ORS) conjecture \cite[Conjecture 2]{ORS}, our results also imply the rational Cherednik algebra acts on the HOMFLY-PT homology of {\em any} algebraic link. We do not pursue these directions further.
\end{remark}

\begin{remark}[For the physically minded reader]
	As is clear from the introduction, we were inspired in part by the physics of three-dimensional $\cN=4$ gauge theory \cite{DGHOR} and its relationship to a recent construction of the triply graded HOMFLY-PT homology \cite{OR}, whereby the various $a$-degrees are realized within a certain category of matrix factorizations. 
	
	In the upcoming (companion) work \cite{DGHOR}, the construction of \cite{OR} is interpreted as a computation in the $B$-twist of $U(n)$ gauge theory with hypermultiplets transforming in the representation $T^*N$ for $N = \Ad\oplus V$. For the $\ell$-th possible $a$-degree, one computes the supersymmetric Hilbert space of the theory in the presence of a Wilson line in the representation $\bigwedge^\ell V$ subject to a certain boundary condition whose parameters specify the knot in question.
	
	The three-dimensional mirror of this construction is a computation in the $A$-twist of the same theory. Again, one computes the supersymmetric Hilbert space of the theory but now in the presence of a particular vortex line and subject to a different boundary condition. The parameters of this boundary condition translate to the eigenvalues of one of the adjoint fields, which braid around one another along the boundary. For algebraic links, this computation can be reformulated algebraically and one finds that the supersymmetric Hilbert space associated to the lowest $a$-degree component of HOMFLY-PT homology can be computed as the homology of the generalized affine Springer fibers we discuss below.
	
	In the general context of three-dimensional $\cN=4$ theories, the supersymmetric Hilbert spaces associated to boundary conditions and the action of the quantized Coulomb branch on them appeared previously in \cite{BDGH} and \cite{BDGHK}, and we make their geometric action rigorous via the BFN presentation in Section \ref{sec:sca}. In many cases of interest, we can realize the action of the Coulomb branch using an ``abelianization procedure," c.f. \cite{BFNSlices, BDG, WebKD}.
	
	A generalization of these Hilbert spaces, and the local operators that act upon them, that includes ($\tfrac{1}{2}$-BPS) vortex line operators appeared briefly in \cite{BDGH} and was the central aim of \cite{lineops}. Some choices of vortex lines and boundary conditions admit an algebraic realization; the vortex lines are labeled by a choice of unbroken gauge group $\bP$ together with allowed profiles for the matter fields $\bN$ compatible with $\bP$. For a certain class of Dirichlet boundary conditions, and in particular those used in the construction of HOMFLY-PT homology for algebraic links, the supersymmetric Hilbert space in the presence of a line operator $\cL_{\bP, \bN}$ is identified with the equivariant homology of a generalized affine Springer fiber.
	
	We describe below the line operator $\cL_\ell$, \emph{i.e.} choice of $\bP, \bN$, and Dirichlet boundary condition, \emph{i.e.} choice of fiber, realizing the incidence varieties of \cite{ORS}. ORS conjecture \cite[Conjecture 2]{ORS} that the equivariant homology of these incidence varieties, \emph{i.e.} the supersymmetric Hilbert spaces in the presence of this boundary condition and vortex line, realizes the $\ell$-th possible $a$-degree of HOMFLY-PT homology. These homologies are naturally endowed with an action of the algebra of local operators bound to the vortex line, \emph{i.e.} an action of a convolution algebra generalizing the Coulomb branch construction of BFN \cite{BFN}.
	
	A physical derivation of the choice of line operator $\cL_\ell$ and boundary condition used in our construction, as well as its relation to the work of Oblomkov-Rozansky \cite{OR}, will be discussed in \cite{DGHOR}. Understanding the module structure of these homologies is a direction for future work.
\end{remark}

\begin{remark}
	From the physical perspective described above, it is clear why the action constructed in \cite{OY} is of the \emph{trigonometric} Cherednik algebra, whereas the present work constructs an action of the \emph{rational} Cherednik algebra.
	
	The construction of \cite{OY} describes the action of the Coulomb branch (and a vortex line generalization thereof) for a related gauge theory (a 3d $\cN=4$ gauge theory again with $U(n)$ gauge group but instead with the representation $N = \Ad$ instead of $N = \Ad \oplus V$) and Dirichlet boundary condition. The Coulomb branch of this theory is known to be the spherical subalgebra of the trigonometric Cherednik algebra \cite{KN} and the above physical construction realizes an action thereof on the homology of an ``classical" affine Springer fiber.
	
	Mathematically, the geometry involved in the BFN construction shows that the Coulomb branch algebra for a representation $N=N_1\oplus N_2$ injects into the Coulomb branch algebra of $N_i$. From \cite{KN,BEF} these injections for the Jordan quiver with different framings are identified with Suzuki's embedding of the type A cyclotomic Cherednik algebra into the trigonometric one.

From our construction, one also has a similar inclusion of generalized affine Springer fibers for ``forgetting" representations. Although we do not use it in the present work, it seems plausible that these inclusions induce equivariant maps for the convolution actions of the Coulomb branch algebras in Borel-Moore homology. The case of $\mathbf{P} = G(\mathcal{O})$ and $\bN = N(\mathcal{O})$ is provided \cite[Prop. 4.15]{HKW}.
\end{remark}

\begin{remark}
	Most of our results, including the computations with fixed-point localization, make sense over other algebraically closed fields, in particular $\overline{\F}_q$ with $\overline{\Q}_\ell$-coefficients in cohomology. But since it makes life easier, and the results of \cite{BFN} are also written in the language of algebraic geometry over $\C$, we have decided to work over $\C$ throughout. This also makes the comparison to link homology more transparent.
\end{remark}
The paper is organized as follows. In Section \ref{sec:gasf} we recall the necessary definitions of generalized affine Springer fibers $M_v$. In Section \ref{sec:hilb} we identify the generalized affine Springer fiber (for the datum $(GL_n,\Ad\oplus V)$) isomorphic to $\Hilb^\bullet(\hC)$, and generalizations thereof, for $\hC$ the germ of a plane curve singularity. In Section \ref{sec:sca} we define a convolution action of the quantized Coulomb branches of \cite{BFN} on the equivariant (Borel-Moore) homology of the generalized affine Springer fibers $M_v$, specializing in particular to the action of the spherical rational Cherednik algebra on the equivariant homology of the Hilbert schemes. The proof that the convolution really defines an action is relegated to Appendix \ref{appendix}. In Section \ref{sec:toruslinks} we discuss the quasi-homogeneous singularities $\hC_{n,k}$ related to $(n,k)$ torus links and show how they relate to rational Cherednik algebra representations. In Section \ref{sec:SCA2} we discuss $(2,2\ell+1)$ torus knots in detail.

\section{Generalized Affine Springer Theory}
\label{sec:gasf}
This section is written in more generality than is needed for most of our main results.
Let $G/\C$ be a reductive group, $\fg=\Lie(G)$, and $N$ be an algebraic representation of $G$. Let $\cK=\C((t))$ and $\cO=\C[[t]]$. 
Let $\bP$ be a parahoric subgroup of $G(\cK)$ and $\bN\subset N(\cK) $ a lattice stable under $\bP$. In later sections, we only use standard parahorics $\bP\subset G(\cO)$ coming as preimages of parabolic subgroups in $G(\C)$ via the ``evaluation at zero" map, but it should be clear where this assumption can be dropped.
Let $\Gr_G$ be the affine Grassmannian of $G$, $\Fl_G$ the affine flag variety of $G$ and, 
more generally, $\Fl_\bP$ the partial affine flag variety associated to $\bP$. On the level of $\C$-points, $\Fl_\bP(\C)=G(\cK)/\bP$.

\begin{definition}
	\label{def:genzdasf}
	Let $v\in N(\cK)$. Define the {\em generalized affine Springer fiber} (GASF) associated to the datum $(v,\bP,\bN)$ as the closed sub-ind-scheme of the partial affine flag variety $\Fl_\bP$ whose functor of points is defined as
	$$M_v^{\bP, \bN}(A):=\{g\in G(A((t))) | g^{-1}.v\in \bN(A)\}/\bP(A)$$ for any $\C$-algebra $A$.
\end{definition}

\begin{remark}
	Note that the definition of $M^{\bP, \bN}_v$ also depends on $G$. Since we will only be working with $G=GL_n$, we mostly omit these from the notation. When $\bP=G(\cO), \bN = \Ad(\cO) \oplus \cO^n$, we simply denote $M_v^{\bP, \bN}$ by $M_v$. Similarly, when $\bP=\bI,\bN = \Lie(\bI) \oplus \cO^n$ for $\bI$ an Iwahori subgroup we use $\tM_v$ and, more generally, when $\bN = \Lie(\bP) \oplus \cO^n$ we use $M^\bP_v$. 
\end{remark}
\begin{remark}
Note that $M_v$ is in general highly nonreduced. We will only work with its etale/singular Borel-Moore homologies so may in practice work with the reduced structure only.

\end{remark}
\begin{remark}
	The ``classical" affine Springer fibers are the case when $N=\Ad$ and $\bN$ is the Lie algebra of $\bP$. As explained in \cite{GKM}, the GASF can be thought of as an affine analog of Hessenberg varieties. Note that both our GASF and those of \cite{GKM} are different from the Kottwitz-Viehmann varieties, which are group versions of affine Springer fibers.
\end{remark}

In \cite{Va,VV,OY}, an action of the (degenerate) double affine Hecke algebra of $\fs\fl_n$ was constructed on the equivariant (K-)homology of certain (usual) affine Springer fibers using the convolution algebra technique (see e.g. \cite{CG}).

Just as affine Springer fibers are a source of affine Springer representations of affine Weyl groups and Cherednik algebras, generalized affine Springer fibers can be used to construct representations of certain convolution algebras associated to the datum $(G,N)$, see \cite[Theorem 3.10]{BFN} or Theorem \ref{thm:BFN} below. These are the ``quantized Coulomb branches" of three-dimensional $\cN=4$ field theories, or ``BFN algebras." In the classical case $N=\Ad$, the K-theoretic analog of the Coulomb branch algebra is the DAHA, as explained e.g. in \cite{FT}.

In particular, in \cite{HKW}, the convolution algebra technique from above was extended to {\em any} Coulomb branch algebra. The authors of {\em loc. cit.} were kind enough to share their preliminary results on the topic with us, and we expand upon these results in Section \ref{sec:sca} (which focuses on the $N=\Ad\oplus V$ case) and in Appendix \ref{appendix}. We also define the maximal parahoric generalization of the generalized affine Springer theory, using natural variations of the techniques in \cite{BFN,HKW}.

\begin{remark}
	In analogy with \cite{YunThesis}, we expect there to be a ``global" Springer theory defined on certain generalized Hitchin spaces (spaces of quasimaps) at least for $N$ with good invariant-theoretic properties. This direction will be pursued in future work.
\end{remark}

\section{Hilbert Schemes of Points on Curve Singularities}
\label{sec:hilb}
Let $\hC:=\Spec R$ be the germ of a (possibly non-reduced) plane curve singularity and write $R=\C[[x,t]]/(f)$ for $f \in \C[[x,t]]$ with $f(0,0) = 0$.
\begin{remark}
	\label{rmk:weierstrass}
	We note that many choices of $f$ yield the same $\hC$. In particular, we may always use the Weierstrass preparation theorem to choose an $f$ that is Weierstrass, \emph{i.e.} a monic polynomial in $x$ whose coefficients are formal series in $t$ that vanish at $t = 0$:
		$$f(x,t) = x^n - a_{n-1} x^{n-1} - ... - a_0 \qquad a_k(t) \in t \C[[t]]\,.$$
	 We will usually denote the $x$-degree of $f$ by $n$.
\end{remark}
\begin{definition}
	The {\em Hilbert scheme of $m$ points on $\hC$} is defined as the scheme representing the functor of points
	$$\hC^{[m]}(A):=\Hilb^m(\hC)(A):=\{\text{colength } m \text{ ideals in } A[[x,t]]/f\},$$ where $A$ is any $\C-$algebra.
	Similarly, given a partition $\vec{p} = (p_1, ..., p_d)$ of $n$, the  {\em $\vec{p}$-flag Hilbert scheme of $m+n$ points on $\hC$} is defined as the scheme representing $$\begin{aligned}
		\hC^{[m,m+\vec{p}]}(A) & :=\Hilb^{[m,m+\vec{p}]}(\hC)(A)\\
		& :=\{I_d \subset ... \subset I_0 \subset A[[x,t]]/f  | I_i \text{ is a colength } m + \sum\limits_{j=1}^i p_j \ \text{ ideal in } A[[x,t]]/f\}\,.
	\end{aligned}$$
\end{definition}
	In particular, the scheme 
	$$\Hilb^\bullet(\hC):=\bigsqcup_{m\geq 0}\Hilb^m(\hC)$$ 
	is naturally the moduli space of finite length subschemes on $\hC$, whereas
	$$\Hilb^{[\bullet,\bullet+\vec{p}]}(\hC):=\bigsqcup_{m\geq 0}\Hilb^{[m,m+\vec{p}]}(\hC)$$ 
	is naturally the moduli space of flags of such subschemes.

	Requiring flags of ideals such that $I_d = t I_0$ puts a natural constraint on the allowed partitions $p$; if $f$ is a polynomial in $x$ of degree $n$ then $p$ must be a partition of $n$. When $\vec{p}=(1,\ldots,1)$ is the one-column partition of $n$, the relevant Hilbert scheme is the {\em parabolic flag Hilbert scheme} $\PHilb^\bullet(\hC)$ (see e.g. \cite{GSV}), consisting of full flags of ideals of length $n$, with the condition that $I_n=tI_0$. More generally, if $\vec{p}=(p_1,...,p_d)$ is any partition of $n$ we can define the {\em $\vec{p}$-parabolic flag Hilbert scheme} $\PHilb^{[\bullet,\bullet+\vec{p}]}(\hC)$.

\begin{definition}
	\label{def:parabolic}
	The {\em $\vec{p}$-parabolic flag Hilbert scheme} $\PHilb^{[m,m+\vec{p}]}(\hC)$ is defined as the scheme $$\PHilb^{[m,m+\vec{p}]}(\hC):=\{I^\bullet \in \Hilb^{[m,m+\vec{p}]}(\hC) | I_d = t I_0)\}.$$
\end{definition}
We now state and prove our first main theorem. As above, we fix the pair $G = GL_n, N = \Ad \oplus V$ and the (germ of a) planar curve singularity $\hC$. Write $\hC = \Spec R$ for $R = \C[[x,t]]/(f)$ and $f$ as in Remark \ref{rmk:weierstrass} and denote $v = (\gamma, e_n) \in N(\cK)$, where $\gamma$ is the companion matrix of $f$ and $e_n$ is the $n$-th standard basis vector of $V = \C^n$.
\begin{theorem}
	\label{thm:maincomparison}
	For the germ of any plane curve singularity $\hC$, the generalized $\Ad\oplus V$-affine Springer fiber $M_v\subset \Gr_{G}$ admits an isomorphism of schemes $$\varphi: M_v\to \Hilb^\bullet(\hC).$$
	More generally, there exist $\bP, \bN = \Lie(\bP) \oplus \cO^n$ such that the generalized $\Ad\oplus V$-affine Springer fiber $M^{\bP}_v\subset \Fl_\bP$ admits an isomorphism of schemes $$\varphi_\bP: M^{\bP}_v \to \PHilb^{[\bullet,\bullet+\vec{p}]}(\hC).$$
\end{theorem}
\begin{proof}
	
	Note that we can interpret $\hC$ and $\hC^{[m]}$ as follows. We use Weierstrass preparation to write $f(x,t)$ as a degree $n$ polynomial in $x$ as in Remark \ref{rmk:weierstrass}. There is an isomorphism of $\C[[t]]=\cO$-modules
	\begin{equation}
	\label{eq:moduleiso}
	R = \C[[x,t]]/(f) \cong \langle 1,x,\ldots, x^{n-1}\rangle_\cO,
	\end{equation}
	where $\langle S \rangle_\cO$ denotes the free $\cO$-module generated by a set $S$. (For an arbitrary $\C$-algebra $A$ we also have $A[[x,y]]/(f)\cong\langle 1,x,\ldots, x^{n-1}\rangle_{A[[t]]}$, and similar considerations apply below, so we omit this from the notation.)
	
	Taking the total ring of fractions of $R$, we see that as $\C((t))=\cK$-vector spaces
	$\Frac(R)\cong (\cK^n)^*$ ($\cK$-linear dual of $\cK^n$) as follows. If $f$ is square-free so that $\hC$ is reduced, $\Frac(R)\cong \prod_{i=1}^dF_i$  where $d$ is the number of irreducible factors over $\cK$ of $f$ and $F_i$ are finite extensions of $\cK$ so that $\sum_i [F_i:\cK]=n$.
	
	If $f$ has a repeated factor, by the Chinese Remainder Theorem we have an isomorphism $R\cong \prod_{i=1}^d\cO_i$ where each $\cO_i$ is a finite ring extension of $\cO$ which is torsion-free over $\cO$. More precisely, writing $f=
\prod_{i=1}^d f_i^{m_i}$, where $f_i$ are irreducible and pairwise distinct, $\cO_i\cong \cO[x]/f_i^{m_i}$.  Since $\cO$ is a domain, $\Frac(\cO_i)\cong \cO_i\otimes_\cO \cK$. As a $\cO$-module, $\cO_i\cong \cO^n$ and in particular, $\Frac(R)\cong (\cK^n)^*$.
	
	Under the identification $\phi^*:\Frac(R)\cong (\cK^n)^*$, the natural injection $R\hookrightarrow \Frac(R)$ realizes the isomorphism in \eqref{eq:moduleiso} by identifying $R$ with $(\cO^n)^*$ and $1\in R$ with the vector $e_1^*=(1,0,\ldots, 0)$ in $(\cK^n)^*$. We may moreover choose $\phi^*$ so that in the dual basis of $(\cK^n)^*$, the operator of multiplication by $x$ has the form $$\gamma=\begin{pmatrix}
	0 & 1 & \cdots & 0 & 0\\
	\vdots & \vdots & \ddots & \ddots & \vdots\\
	0 & 0 & \ddots & 1 & 0\\
	0 & 0 & \cdots & 0 & 1\\
	a_0 & a_1 & \cdots & a_{n-2} & a_{n-1}\\
	\end{pmatrix},$$
	where $a_i \in t\cO$. Recall that a matrix of the above form is called the {\em companion matrix} of the polynomial $x^n-a_{n-1} x^{n-1} -\cdots-a_1 x-a_0$. In particular, $f(x,t)$ agrees with the characteristic polynomial of $\gamma$ $$f(x,t) = \det(x \text{Id}_{n} - \gamma).$$ Note that $\{e_k^* = e_1^*\gamma^{k-1}\}_{k=1}^{n}$ is a $\cO$-basis of $(\cO^n)^*$ and $\gamma e_{k+1} = e_k + a_k e_n$ for $k = 1,..., n-1$.
	
	By definition, $\cO$-lattices in $(\cK^n)^*$ stable under $\gamma$ are the same as (nonzero) fractional $R$-ideals, i.e. $R$-submodules $\Lambda$ with nonzero $r\in \Frac(R)$ with $r\Lambda\subset R$.
	The variety of nonzero ideals of finite codimension in $R$ is then identified with fractional ideals in $\Frac(R)$ contained in $R$. Indeed, note that the condition of being a lattice implies that tensoring $\Lambda$ with $\cK$ and projecting to each factor of $\cK$ is a surjective map, hence the corresponding ideal is of finite codimension. Under $\phi$, we get $$\Hilb^\bullet(\hC)\cong X:=\{\Lambda\subset (\cO^n)^*| \Lambda\gamma \subset \Lambda\}.$$ 
	
	Now for any lattice $\Lambda$, there is an element $g\in G(\cK)$ so that 
	$\Lambda=(\cO^n)^* g^{-1}$.
	It is well defined up to the stabilizer of $(\cO^n)^*$, which is $G(\cO)$. 
	If $\Lambda \subset (\cO^n)^*$ and $\Lambda \gamma \subset \Lambda$, we have 
	\begin{enumerate}
		\item $g^{-1}\in G(\cK) \cap \fg\fl_n(\cO)$, because $(\cO^n)^* g^{-1} =\Lambda \subset (\cO^n)^*$, and
		\item $g^{-1} \gamma g \in \Ad(\cO)$, because $(\cO^n)^* g^{-1} \gamma g=\Lambda \gamma g \subset \Lambda g = (\cO^n)^*$ and the stabilizer of $(\cO^n)^*$ is $\fg \fl_n(\cO) = \Ad(\cO)$.
	\end{enumerate}
	If $e_i$ denotes the standard basis in $\cK^n$, the first point implies that $g^{-1} e_n$ belongs to $\cO^n$.
	
	Let $v:=(\gamma, e_n) \in (\Ad \oplus V)(\cO)$ and consider the map $$\Lambda\mapsto [g]$$ from $X$ to the scheme 
	$$M_v=\{[g]\in \Gr_G | g^{-1} \gamma g\in \Ad(\cO), g^{-1} e_n \in \cO^n\}.$$ 
	We will construct an inverse to this map. Given any $[g]\in M_v$, we have
	\begin{enumerate}
		\item $g^{-1}\in G(\cK) \cap \fg\fl_n(\cO)$, because $g^{-1} e_n\in \cO^n$, $g^{-1} \gamma g \in \Ad(\cO)$ and $$g^{-1} e_{k}= (g^{-1} \gamma g) g^{-1} e_{k+1} - a_k g^{-1} e_n \in \cO^n$$ for $k = 1, ..., n-1$, and 
		\item $(\cO^n)^* g^{-1} \gamma \subset (\cO^n)^* g^{-1}$, because $g^{-1} \gamma g\in \Ad(\cO)$. 
	\end{enumerate}
	The first point implies that $\Lambda = (\cO^n)^* g^{-1} \subset (\cO^n)^*$ and the second implies $\Lambda$ is closed under the action of $\gamma$, i.e. $\Lambda \in X$. As these constructions are inverse to each other, we have $X\cong M_v$.
	
	Finally, composing with the isomorphism to $\Hilb^\bullet(\hC)$ we get that 
	$$\Hilb^\bullet(\hC)\cong M_v.$$ By Definition \ref{def:genzdasf} the space $M_v$ is the generalized $\Ad\oplus V$-affine Springer fiber for $v=(\gamma, e_n)$.
	
	Now choose a partition $\vec{p} = (p_1, ..., p_d)$ of $n$ and let $\bP$ be the corresponding parahoric subgroup. From the above we know that a flag of ideals $t I_0 = I_d \subset ... \subset I_0 \subset R$ can be identified with a flag of lattices $t \Lambda_0 = \Lambda_d  \subset ... \subset \Lambda_0 \subset (\cK^n)^*,$ such that each lattice is closed under the action of $\gamma$. Such a flag is the $G_\cK$ translate of the standard flag of lattices $t (\cO^n)^* \subset ... \subset (\cO^n)^* \subset (\cK^n)^*$, and the stabilizer of this standard flag is exactly $\bP$, \emph{i.e.} each of these flags of lattices can be identified with some $[g] \in \Fl_\bP$. For $[g] \in \Fl_\bP$ to realize a flag of ideals is equivalent to $g^{-1}.v \in \Lie(\bP)\oplus \cO^n$. Just as above, the identification $t I_0 \subset ... \subset I_0 \subset R \leftrightarrow [g]$ yields the desired isomorphism with $M^{\bP}_v$.
\end{proof}
\begin{remark}
	\label{rmk:incidence}
	It is interesting to consider the generalized affine Springer fiber over the same $v$ as above but with $\bN \neq \Lie(\bP)\oplus \cO^n.$  One such variant yields the incidence varieties ``$C^{[m \leq m + l]}$" (note the notational difference to this paper) of \cite{ORS}, where we choose the partition $(l, n-l)$ and require that the $\Ad(\cO)$ element is proportional to $t$ in the first $l$ columns:
	$$\bN = \mathfrak{gl}_n(\cO) t^{(1,...,1,0,...,0)} \oplus \cO^n\,.$$ 
	This choice of $\bN$ ensures that the flag of lattices $t \Lambda_0 \subset \Lambda_1 \subset \Lambda_0$ satisfies $\Lambda_0 \gamma \subset \Lambda_1$. In terms of ideals, this latter point implies that $M I_0 \subset I_1 \subset I_0$, where $M = \langle x, t \rangle$ is the maximal ideal of $R$. See \cite{DGHOR} for more details. Note that in the $G=SL_n, N=\Ad$-case similar incidental varieties appear in the work of Cherednik and Philipp \cite{CP} under the name of flagged Jacobian factors.
\end{remark}
\begin{remark}
	An equivalent, perhaps preferred, description of $\Hilb^\bullet(\hC)$ is as lattices $\Lambda \subset \cO^n$. If we identify $1 \leftrightarrow e_1$, then following the above proof one finds an isomorphism to the generalized $\Ad\oplus V^*$-affine Springer fiber $M_w'$ for the vector $w=(\gamma^T, e_n^*) \in \Ad(\cO)\oplus (\cO^n)^*$, c.f. \cite{YunPCMI}.
\end{remark}
\begin{remark}
Note that the proof doesn't assume $\hC$ to be reduced. In particular, this suggests us to define the ``compactified Picard variety" $\bPic(\hC)$ for these non-reduced curves as the classical $\GL_n$-affine Springer fiber, although it is usually not considered in the literature. For example, when $\gamma$ is the regular nilpotent matrix, the ASF in question gives an infinite-dimensional affine Springer fiber whose homology coincides with that of the affine Grassmannian. Similarly, the GASF in question yields the Hilbert schemes of points on the non-reduced curve $\{x^n=0\}$, which are now finite-dimensional projective subvarieties of the ``positive part" of the affine Grassmannian.
\end{remark}
\begin{remark}
\label{rmk:positivegrassmannian}
More generally, note that by $$\Hilb^\bullet(\hC)\cong \{\Lambda\subset (\cO^n)^*| \Lambda\gamma \subset \Lambda\}$$ we may identify $\Hilb^\bullet(\hC)$ as the intersection $$\Sp_\gamma\cap \Gr_{GL_n}^+$$ where $\Sp_\gamma$ is the ``usual" ($N=\Ad$) affine Springer fiber of $\gamma$ and $\Gr_{GL_n}^+$ is the positive part of the affine Grassmannian $$\Gr_{GL_n}^+:=\{\Lambda \subseteq (\cO^n)^* \subset (\cK^n)^*\}$$ not to be confused with the ``positive Grassmannian" which is a distantly related object of intense research. See also \cite[Remark 4.24]{Kiv2}.
\end{remark}
\begin{remark}
	Using the decomposition of $\Gr_G$ by $\pi_1(G)=\Z$ we find that $M_v$ can be expressed as
	$$M_v = \bigsqcup_{m \leq 0}M_v^m,$$
	where $M_v^m$ is the component of $M_v$ inside the degree $m$ part of $\Gr_G$. Indeed, we have $M_v^{m} = \Hilb^{|m|}(\hC)$. Thus $M_v$ is a (infinite) disjoint union of projective varieties, because the Hilbert scheme for fixed $m$ can be realized as a closed subvariety of a Grassmannian. There is a similar decomposition of $M_v^{\bP, \bN}$ obtained from the decomposition of $\Fl_\bP$ by $\pi_1(G)$, coming via pullback by the projection $\Fl_\bP\to \Gr_G$.
\end{remark}

\subsection{Links and torus actions}
If $f(x,t)$ is a polynomial, we may interpret $\hC$ as the germ of the curve $C=\{f=0\}\subset \C^2$. In this case, the intersection of $C$ with a small three-sphere centered at the origin yields a compact one-manifold 
$$\cL:=\text{Link}_0(C)\hookrightarrow S^3.$$

By work of Oblomkov-Rasmussen-Shende and others (see \cite{Mig} and references therein) it is conjectured that, topologically, the Hilbert schemes of $\hC$ are controlled by the HOMFLY-PT homology of the corresponding link $\cL$. See Conjecture \ref{conj:ORS} for a more precise statement of the conjecture for minimal $a$-degree.

Consider $f$ of the form $f = x^n - t^k$ for $n,k \geq 0$. The special form of $f$ in this case means that the singularity is {\em quasi-homogeneous}, so there is a straightforward $\C^\times$ action on $\cM_{(n,k)}:=M_v$ coming from scaling $x$ and $t$. As has been noted by various authors, we thus get an extra torus action on the Hilbert schemes. This is less trivial on the generalized affine Springer fiber side. 

Namely, let $1\to G\to \tG\to G_F\to 1$ be an extension of algebraic groups over $\C$ and let $\tG^{\cO}_{\cK}$ be the preimage in $\tG_\cK$ of $G_{F,\cO}$. With our definition of $M_v$, we always have an action of the stabilizer of $v$ in $\tG^\cO_\cK \rtimes \C^\times_{rot}$ on $M_v$ (see the next section). Let $G=GL_n, G_F=\C^\times_{dil}, \tG=GL_n\times \C^\times_{dil}$, where $\C^\times_{dil}$ acts by dilating the $\Ad$-part in $\Ad\oplus V$. This action is considered in \cite{OY} in the case of usual affine Springer fibers, where $\C^\times_{rot},\C^\times_{dil}$ are denoted $\G_m^{rot}, \G_m^{dil}$. For $v=(\gamma,e_n)$ corresponding to $f=x^n-t^k$ as in Theorem \ref{thm:maincomparison}, the stabilizer is given as follows. It is worth noting that we use different conventions from the usual (physical) conventions used for $\C^\times_{rot}$ in some of the literature \cite{BDG, BDGH, BDGHK, BFN} . In particular, we do \emph{not} include the overall scaling of $N$ by weight $\tfrac{1}{2}$ in addition to scaling $t$. These conventions are those used by Webster, see e.g. \cite{WebKD}.

\begin{lemma}
	\label{lemma:nkstab}
	For $v=(\gamma,e_n)$ corresponding to $f=x^n-t^k$, with $\gcd(n,k) = 1$, as in Theorem \ref{thm:maincomparison}, we have
	$$L_v:=\text{Stab}_{\tG_\cK^\cO\rtimes \C^\times_{rot}}(v)\cong \C^\times.$$
\end{lemma}
\begin{proof}
	Consider acting with $(g, \mu,\lambda) \in \tG_{\cK}^\cO\rtimes\C^\times_{rot}$ on $v=(\gamma, e_n)$ for $v$ corresponding to $f=x^n-t^k$. Here $\mu$ denotes the flavor part of $\tilde{g}=(g,\mu)\in\tG_\cK^\cO$. Preserving the determinant of $\gamma$ imposes the equation 
	$$\mu^n \lambda^k = 1.$$
	Preserving $e_n$ then says that the last column of $g$ is $e_n$, thus the last column of $g^{-1}$ is also $e_n$. From this, we find that the last column of $g \gamma g^{-1}$ is the penultimate column of $g \mu$, so we need this column of $g$ to be $\mu^{-1} e_{n-1}$ for $g$ to preserve the last column of $\gamma$. This process continues column-by-column so we must have 
	$$g = \diag(\mu^{1-n}, \ldots, \mu^{-1}, 1).$$
	In particular, the stabilizer is the image of the cocharacter $\C^\times\to \tG_{\cK}^\cO\rtimes\C^\times_{rot}$ given by $$\nu\mapsto (\diag(\nu ^{(n-1)k}, \ldots, \nu^k,1), \nu^{-k}, \nu^n).$$
\end{proof}

\begin{remark}
	In general, \emph{i.e.} when $\gamma$ is not quasi-homogeneous, it's {\em always} the case that the stabilizer is trivial by a similar argument. On the other hand, the same proof shows that $\gamma$ for the curve $\{x^n=0\}$ has stabilizer $(\C^\times)^2$ given by $(\diag(\mu^{1-n}, \ldots, \mu^{-1}, 1), \mu, \lambda)$.
\end{remark}

%\begin{remark}
%	\label{rmk:multcomps}
%	Note that $\Sp_\gamma$ has a $A \times \C^\times_{rot}$-action in the case $f=x^n-t^{dn}$, where $A$ is the conjugate of the maximal torus $T$ by $g$ such that $g\gamma g^{-1}$ is diagonal.
%	
%Since $\Gr^-_{GL_n}$ is a stable subset for this action, we also get a large torus action on $\Hilb^\bullet(\hC)$. This has not been considered in the literature and seems harder to describe from the point of view of the Hilbert scheme.
%\end{remark}

\begin{proposition}
	\label{prop:fixedpoints}
	In the case $\gcd(n,k)=1$, the action of $L_v$ on $M_v$ has isolated fixed points labeled by cocharacters $A$ of the maximal torus $T\subset GL_n$ such that
	\begin{equation}
	\label{eq:fixedpointsconstaint}
	\langle A, \omega_n\rangle \geq 0 \hspace{1cm} \langle A, \alpha_i \rangle \geq 0 \hspace{1cm} \sum_{i=1}^{n-1} \langle A, \alpha_i \rangle \leq k,
	\end{equation}
	where $\omega_n$ is the $n$-th fundamental weight of $GL_n$, $\alpha_i$ are the simple roots of $GL_n$, and $\langle, \rangle$ is the pairing of cocharacters and weights. Note that this is just the $k$-dilated fundamental alcove as appears e.g. in \cite{GSV}.
\end{proposition}
\begin{remark}
	If we write $A = (A_1, ..., A_n)$ the above constraint \eqref{eq:fixedpointsconstaint} corresponds to
	$$0 \leq A_n \leq A_{n-1} \leq \ldots \leq A_1 \leq A_n + k$$ This fixed point corresponds to the ideal generated by $(t^{A_1}, t^{A_{2}}x, ..., t^{A_{n-1}} x^{n-2}, t^{A_n} x^{n-1}).$ In this language, the constraint on $A$ is to ensure that this is indeed an ideal. Namely, the set generated by the monomials $t^{A_i} x^{i-1}$ over $\cO$ is closed under multiplication by $x$.
\end{remark}
\begin{proof}
	The action of $\nu \in L_v$ on $[g] \in M_v$ is simply $[\nu g]$, where the product of $L$ and $G(\cK)$ is viewed within $\tG(\cK)\rtimes \C^\times$. In particular, we have
	$$\nu.[g(t)] = [\nu^{((n-1)k,..., k,0)} g(\nu^n t)]$$ 
	where $\nu^{(m_1, ..., m_n)} := \diag(\nu^{m_1}, ..., \nu^{m_n})$. 
	Define the ``orbital variety" (see the next section for motivation) 
	$$\hat{\cV}^v:=G_\cK.v\cap N_\cO.$$
	
	We now describe $M_v \cong \hat{\cV}^v/G(\cO)$. By the Iwasawa decomposition of $G(\cK)$, we can choose to represent elements of $Gr_G$ by a lower-triangular matrix in $G(\cK)$ of the form $h = t^{-A} + q$, where $q$ is strictly lower triangular. Moreover, we can always use $G(\cO)$ to make the (non-zero) $q_{ij}$ Laurent polynomials and with no terms of degree larger than $-A_i-1$, c.f. \cite{LS}. We interpret $A$ as a cocharacter of $T\subset GL_n$.
	
	Note that the chosen representative $h(t) = t^{-A} + q$ is not invariant under $L_v$ but will require a compensating $G(\cO)$ transformation. In particular, under the action of $\nu$, the diagonal entries of $h$ transform as $t^{-A_i} \mapsto \nu^{(n-i)k - n A_i} t^{-A_i}$ whereas $q_{ij}(t) \mapsto \nu^{(n-i)k} q_{i j}(\nu^n t)$. We can always return the diagonal entries to $t^{-A_i}$ by means of a compensating diagonal $G(\cO)$ transformation, sending $\nu^{(n-i)k} q_{i j}(\nu^n t) \mapsto \nu^{k(j-i)+n A_j} q_{i j}(\nu^n t)$. Since the non-zero entries of $q$ are (Laurent) polynomials and have degree at most $-A_i-1$ in row $i$, it follows that there is no lower-triangular matrix that can send this back to $h$. For example, when $j = i-1$ we must solve the equation 
	$$\nu^{n A_{i-1}-k} q_{i i-1}(\nu^n t) + t^{-A_i} p_i(t) = q_{i i-1}(t)$$
	for $p_i(t) \in \cO$. This requires $t^{A_i}(q_{i i-1}(t)-\nu^{n A_{i-1}-k} q_{i i-1}(\nu^n t))$ to belong to $\cO$, hence $$q_{i i-1}(t)-\nu^{n A_{i-1}-k} q_{i i-1}(\nu^n t) = 0$$ since $q_{i i-1}$ has no terms of degree more than $-A_{i}-1$. Finally, since $k$ is coprime to $n$ we conclude that $q_{i i-1}(t) = 0$. With $q_{i i-1} =0$, it is straightforward to inductively show that $q = 0$.
	
	Finally, we see that $t^A e_n \in \cO^n$ if and only if $A_n \geq 0$. Similarly, we have $$t^{A}\gamma t^{-A} = \begin{pmatrix}
	0 & t^{A_1 - A_2} & \cdots & 0 & 0\\
	\vdots & \vdots & \ddots & \ddots & \vdots\\
	0 & 0 & \ddots & t^{A_{n-2} - A_{n-1}} & 0\\
	0 & 0 & \cdots & 0 & t^{A_{n-1} - A_n}\\
	t^{k+A_n - A_1} & 0 & \cdots & 0 & 0\\
	\end{pmatrix},$$
	which belongs to $\Ad(\cO)$ if and only if $A_{i} \geq A_{i+1}$ for $i = 1, ... n-1$ and $A_n + k \geq A_1$. Thus, $t^{A}.v$ to belong to $\hat{\cV}^v$, for $v$ corresponding to the $(n,k)$ torus knot, if and only if
	$$\langle A, \omega_n\rangle \geq 0 \hspace{1cm} \langle A, \alpha_i \rangle \geq 0 \hspace{1cm} \sum_{i=1}^{n-1} \langle A, \alpha_i \rangle \leq k.$$
\end{proof}
\begin{remark}
	When $n$ and $k$ are \emph{not} coprime it is possible to have $$q_{i i-1}(t)-\nu^{n A_{i-1}-k} q_{i i-1}(\nu^n t) = 0$$ for $q_{i i-1}(t)$ nonzero. In these circumstances there are still fixed points but they need not be isolated. 
\end{remark}
\begin{remark}
	The above proof works, up to Weyl group elements, for $L_v$ acting on $\tM_v = \{[g] \in \Fl_\cI | g^{-1}v \in \Lie(\bI) \oplus \cO^n\}$ with $\bI$ an Iwahori subgroup. In particular, when $\gcd{(n,k)} = 1$ there are isolated fixed points which can be represented by matrices $h = t^{-A} \sigma^{-1}$ for cocharacters $A$ of the maximal torus of $T \subset GL_n$ and Weyl group elements $\sigma \in  \fS_n$. For $h^{-1}.v = \sigma t^{A}.v$ to belong to $\Lie(\bI)\oplus \cO^n$, the non-negative integers $(A_1, ..., A_n)$ must have forced jumps. In particular, $\sigma t^A e_n \in \cO^n$ imposes $A_n \geq 0$ and $\sigma t^A \gamma t^{-A} \sigma^{-1} \in \Lie(\bI)$ requires
	$$A_{i} \geq \begin{cases}
	A_{i+1}+1 & \sigma(i+1) < \sigma(i)\\
	A_{i+1} & \sigma(i+1) > \sigma(i)\\
	\end{cases}$$
	$i \in \{1,2,..., n-1\}$ and
	$$A_{n}+k \geq \begin{cases}
	A_1+1 & \sigma(1) < \sigma(n)\\
	A_1 & \sigma(1) > \sigma(n)\\
	\end{cases}$$
	for $i = n$.
	In comparison to the discussion in \cite{GSV}, the fundamental class of this fixed point in equivariant Borel-Moore homology of $\tM_v$ (after localization) corresponds to their ``renormalized" vector $\tilde{v}_{\sigma(A)}$.
\end{remark}
\begin{proposition}
	\label{prop:iwahorifixedpoints}
	In the case $\gcd(n,k)=1$, the action of $L_v$ on $\tM_v$ has isolated fixed points labeled by cocharacters $A$ of the maximal torus $T\subset GL_n$ and $\sigma \in \fS_n$ such that
	\begin{equation}
	\langle A, \omega_n\rangle \geq 0 \hspace{1cm} \langle A, \alpha_i \rangle \geq \tau(i) \hspace{1cm} \sum_{i=1}^{n-1} \langle A, \alpha_i \rangle \leq k - \tau(n).
	\end{equation}
	where $\tau(i) = 1$ if $\sigma(i+1) < \sigma(i)$ and $\tau(i) = 0$ if $\sigma(i+1) > \sigma(i)$, with $\sigma(n+1):=\sigma(1)$.
\end{proposition}

\section{Action of the Rational Cherednik Algebra}
\label{sec:sca}
In this section, we construct an action of the rational Cherednik algebras on equivariant Borel-Moore homologies of Hilbert schemes of $\hC$ and some of its variants.

We first recall the construction of the BFN algebras in general. This is a minor parahoric variant of the construction in \cite{BFN}; the case when $\bP$ is an Iwahori subgroup appears in the work of Webster \cite{WebKD}. We choose a parahoric subgroup $\bP \subset G_\cK$. Suppose $1\to G\to \tG\to G_F\to 1$ is an extension of algebraic groups and suppose 
$\tbP$ is a parahoric subgroup of $\tG_\cK$ that fits in an extension $1\to \bP\to \tbP\to (G_F)_\cO\to 1$ such that $\tbP\cap G_\cK=\bP$. Let
$\tG^{\cO}_{\cK}$ be the preimage in $\tG_\cK$ of $G_{F,\cO}$. 
%Suppose there exists a section $s:G_F\to G$. Let $ev_0: \tG_\cO\to \tG$ be the homomorphism sending $t\mapsto 0$, and let $\tbP:=ev_0^{-1}(s(G_F) ev_0(\bP))$.

Note that $$\Fl_\bP\cong \tG^{\cO}_{\cK}/\tbP\cong 
(\tG^{\cO}_{\cK}\rtimes \C^\times_{rot})/(\tbP \rtimes \C^\times_{rot})$$ and in particular
$$\Gr_G\cong \tG^{\cO}_{\cK}/\tG_{\cO} \cong (\tG^{\cO}_{\cK} \rtimes \C^\times_{rot})/(\tG_{\cO} \rtimes \C^\times_{rot}).$$ Let $N$ be an algebraic representation of $\tG$ extending the representation of $G$ on $N$. 
\begin{definition}
	Define the {\em BFN space} of $(G,N,\bP,\bN)$ as
	$$\cR_{G,N,\bP,\bN}=\{([g], s)\in \Fl_\bP \times \bN|g.s\in \bN\}.$$
\end{definition}
\begin{remark}
	If $\bP=G_\cO, \bN=N_\cO$ we omit the subscripts $\bP, \bN$.
	We naturally have
	$$\cR_{G,N,\bP,\bN}\subset \cT_{G,N,\bP,\bN}:= G_\cK \times_{\bP} \bN \cong \{([g],s)\in \Fl_\bP \times N_{\cK}|g.s\in \bN\}$$
	
	The last isomorphism is given by the embedding $[g,s']\mapsto ([g],g^{-1}.s')$, see \cite[discussion on p.6]{BFN}, and note that we use $g^{-1}$ where they use $g$. We use these descriptions interchangeably. When $\bP=G_\cO, \bN = N_\cO$, $\cT_{G,N}$ has the modular interpretation 
	$$\cT_{G,N}\cong \{(P,\sigma,s)| P \text{ is a } G-\text{torsor on the formal disk } D, 
	\sigma: P|_{D^\times}\xrightarrow{\cong} G|_{D^\times}, s \in \Gamma(D, P\times_G N)\}.$$
	The locally closed sub-ind-scheme $\cR_{G,N}$ consists of those triples $(P,\sigma, s)$ where $\sigma(s)$ extends to a section over $D$. The versions with $\bP$ incorporate appropriate parabolic structure; i.e. we impose that $P$ have a $\bP$-reduction and require $s$ to be compatible with this reduction.
\end{remark}
\begin{theorem}[\cite{BFN}]
	\label{thm:BFN}
	There is a natural convolution product on $\cA_{G,N}:=H_*^{G_\cO}(\cR_{G,N})$ and $\cA^{\hbar}_{G,N}:=H_*^{G_\cO\rtimes \C^\times_{rot}}(\cR_{G,N})$, making them associative algebras with unit. Moreover, $\cA^{\hbar}_{G,N}$ is a flat deformation over $\C[\hbar]$ of $\cA_{G,N}$, which is commutative.
\end{theorem}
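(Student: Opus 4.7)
The plan is to follow the strategy of Braverman--Finkelberg--Nakajima, which is really a sheaf-theoretic convolution argument generalizing the well-known case $N=0$ (where one recovers $H_*^{G_\cO}(\Gr_G)$ with the usual convolution product). First, I set up the basic convolution diagram. Let $\cT = G_\cK \times^{G_\cO} N_\cO$ and consider the convolution space
$$\cR \tilde\times \cR := \{([g_1],[g_2],s) \in \Gr_G \times \Gr_G \times N_\cO \mid g_1^{-1}.s \in N_\cO,\ (g_1 g_2)^{-1}.s \in N_\cO\},$$
together with the two maps
$$\cR \times \cR \xleftarrow{\;p\;} \cR \tilde\times \cR \xrightarrow{\;m\;} \cR,$$
where $p([g_1],[g_2],s) = (([g_1],s),\,([g_2],g_1^{-1}.s))$ and $m([g_1],[g_2],s) = ([g_1 g_2],s)$. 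The map $p$ is a locally trivial fibration with smooth fibers, and one verifies directly that $m$ is ind-proper when restricted to a union of finite-dimensional $G_\cO$-orbit closures in $\Gr_G$; this is the only nontrivial geometric input.

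Next I would define the product. For classes $a,b \in H^{G_\cO}_*(\cR)$, form the exterior product $a\boxtimes b \in H^{G_\cO \times G_\cO}_*(\cR \times \cR)$, pull back along $p$ (well-defined up to the shift in equivariance along the $G_\cO$-torsor $G_\cK \to \Gr_G$) to obtain a class on $\cR \tilde\times \cR$, and then pushforward along the ind-proper map $m$. Associativity is standard and comes from comparing two factorizations of a three-fold convolution diagram $\cR \tilde\times \cR \tilde\times \cR$; both yield the same pushforward. The unit is the fundamental class of the fiber of $\cR \to \Gr_G$ over the base point $[e] \in \Gr_G$, which is $N_\cO$ itself. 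The same formulas with $G_\cO$ replaced by $G_\cO \rtimes \C^\times_{rot}$ give the product on $\cA^\hbar_{G,N}$.

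The remaining substantive assertions are that $\cA_{G,N}$ is commutative and that $\cA^\hbar_{G,N}$ is a filtered quantization. For commutativity, I would use that $\cR$ sits inside the $G_\cO$-equivariant vector bundle $\cT$ over $\Gr_G$, and invoke equivariant formality of $H^*_{G_\cO}(\Gr_G)$ together with a standard swap-argument: one compares the convolution product with the opposite convolution through the involution $([g_1],[g_2],s) \mapsto ([g_2],[g_1],\ldots)$, and shows that the two differ by a shift carried by the $\hbar$-parameter. Setting $\hbar = 0$ kills this correction, so the resulting algebra is commutative; keeping $\hbar$ gives a one-parameter deformation whose associated graded recovers $\cA_{G,N}$, establishing the filtered quantization property.

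The main obstacle is the careful geometric handling of the infinite-dimensional setup: $\cR$ is an ind-scheme of ind-infinite type, so one must work with $G_\cO$-invariant finite-dimensional approximations and check that pushforwards along $m$ are well-defined and that pullbacks along $p$ can be made sense of despite $p$ having infinite-dimensional fibers (the resolution is the renormalized/dualizing pullback used in \cite{BFN}). Once that formalism is in place, associativity, unitality, commutativity at $\hbar=0$ and the quantization statement all reduce to diagrammatic or formality arguments; the content is essentially in the geometry of the BFN space and the properness of $m$.
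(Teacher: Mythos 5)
The theorem you are proving is quoted in the paper from [BFN] without proof; the closest in-paper material is Appendix A, which uses exactly the machinery you describe (the convolution diagram, restriction with supports, finite-dimensional approximation, properness of $m$, the unit as the fiber $N_\cO$ over the base point) to establish the \emph{module} analogues. Your outline of the product, associativity and unitality is therefore the right approach, with one well-definedness issue: your $\cR\tilde\times\cR$ and the map $p$ do not descend to $\Gr_G\times\Gr_G\times N_\cO$. The condition $(g_1g_2)^{-1}.s\in N_\cO$ and the assignment $([g_1],[g_2],s)\mapsto(([g_1],s),([g_2],g_1^{-1}.s))$ both change when $g_1$ is replaced by $g_1h$ with $h\in G_\cO$. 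The correct setup is the twisted product: work upstairs with $(g_1,[g_2,s])\in G_\cK\times\cR$, map it to $([g_1,g_1g_2.s],[g_2,s])\in\cT\times\cR$, define $p^*$ by restriction with supports for the Cartesian square over the closed embedding $\cR\times\cR\hookrightarrow\cT\times\cR$, and only then quotient by $G_\cO$. You gesture at the renormalized pullback at the end, but the space itself must be $G_\cK\times^{G_\cO}\cR$ rather than a locus in the ordinary product.

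The genuine gap is commutativity. There is no involution of the convolution space exchanging $[g_1]$ and $[g_2]$ --- the two factors enter asymmetrically (one acts on $s$, the other on $g_1^{-1}.s$) --- and equivariant formality of $H^*_{G_\cO}(\Gr_G)$ does not by itself compare the product with the opposite product; nor is there an ``$\hbar$-shift'' to set to zero inside $\cA_{G,N}$, which carries no loop-rotation parameter to begin with. In [BFN] commutativity is proved by a different mechanism: localization gives an injective algebra homomorphism from $\cA_{G,N}$ into the abelianized, localized algebra $\cA_{T,N}\otimes_{H^*_T(\pt)}\Frac(H^*_T(\pt))$ (the same Lemma 5.10 of [BFN] that this paper invokes in its Section 4.3), and for the torus the product of the classes $[t^\lambda]$ and $[t^\mu]$ is an explicit product of equivariant Euler classes which is symmetric in $\lambda$ and $\mu$ exactly when loop-rotation equivariance is dropped. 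Relatedly, the ``filtered quantization'' claim requires $\cA^\hbar_{G,N}$ to be flat over $\C[\hbar]$ with $\cA^\hbar_{G,N}/\hbar\cong\cA_{G,N}$, which [BFN] deduce from vanishing of odd equivariant homology of $\cR_{G,N}$; that input is also absent from your sketch. As written, the last third of your argument would not go through.
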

\begin{definition}
	We will call either of these algebras the {\em BFN algebra} or the (quantized) {\em Coulomb branch}.
\end{definition}
\begin{remark}
	The BFN algebra $\cA_{G,N}$ and its quantization have natural deformations given an extension as above. Namely, the homologies $\tilde{\cA}_{G,N}:=H_*^{\tG_\cO}(\cR_{G,N})$ and $\tilde{\cA}^{\hbar}_{G,N}:=H_*^{\tG_\cO\rtimes \C^\times_{rot}}(\cR_{G,N})$ have the structures of algebras that deform $\cA_{G,N}$ and $\cA_{G,N}^\hbar$, respectively, with $\tilde{\cA}^{\hbar}_{G,N}$ a filtered quantization of the commutative $\tilde{\cA}_{G,N}$. See \cite[Section 3(viii)]{BFN} for more details. This physically corresponds to turning on complex mass parameters for the flavor group $G_F$. In that context, one assumes that $G_F$ is a torus.
\end{remark}
\subsubsection{Parahoric versions}
A slight modification of the construction in \cite[Theorem 3.10]{BFN} gives:
\begin{theorem}
	There is a natural convolution product on $\cA_{G,N,\bP,\bN}:=H_*^{\bP}(\cR_{G,N,\bP,\bN})$ and $\cA^{\hbar}_{G,N,\bP,\bN}:=H_*^{\bP \rtimes \C^\times_{rot}}(\cR_{G,N,\bP,\bN})$, making them associative algebras with unit. Moreover, $\cA^{\hbar}_{G,N,\bP,\bN}$ is a filtered quantization of $\cA_{G,N,\bP,\bN}$.
\end{theorem}
\begin{remark}
	Similarly, one can define the flavor-deformed version $\tilde{\cA}_{G,N,\bP,\bN} = H_*^{\tbP}(\cR_{G,N,\bP,\bN})$ and its quantization $\tilde{\cA}_{G,N,\bP,\bN}^\hbar = H_*^{\tbP \rtimes \C^\times_{rot}}(\cR_{G,N,\bP,\bN})$. Note that unless $\bP=G_\cO$, the algebra 
	$\cA_{G,N,\bP,\bN}$ is in general {\em not} commutative. For example, $\tilde{\cA}_{G,N,\bP,N_\cO}$ is a matrix algebra (of size $\dim_\C G_\cO / \bP \times \dim_\C G_\cO / \bP$) over $\tilde{\cA}_{G,N}$, because the map $\cR_{G,N,\bP,\bN}\to \cR_{G,N}$ is a $G/P$-fibration. For more details, see \cite[Section 7.1]{lineops}.
\end{remark}
\begin{remark}[For the physically minded reader]
	The algebra $\cA_{G,N,\bP,\bN}$ encapsulates the algebra of local operators bound to a ($\tfrac12$-BPS) vortex line operator labeled by the algebraic data $\bP, \bN$. As described in \cite[Section 4.5]{lineops}, the choice of $\bP$ is a breaking of the gauge group in the vicinity of the line operator. The choice $\bN$ is related to a choice of superpotential (compatible with the choice of symmetry breaking) coupling the bulk degrees of freedom to the degrees of freedom on the line operator. Examples of such line operators have been used to obtain non-commutative resolutions of Coulomb branches \cite{BFN-lines} and played a central role in understanding of symplectic duality between Higgs and Coulomb branches \cite{WebKD}.
\end{remark}

\subsection{Convolution action of Coulomb branches on GASF}
Recall that we have defined the BFN space $\cR_{\bP,\bN}:=\cR_{G,N,\bP,\bN}$ of a representation $N$. We will also consider the infinite-rank vector bundle $$\cT_{\bP,\bN}:=\cT_{G,N,\bP,\bN}:=G_\cK\times_{\bP} \bN \to \Fl_\bP.$$ 
For $v \in N_\cK$, we define $$\cV^v_{\bN}:=(\tG_\cK^\cO \rtimes \C^\times_{rot}).v\cap \bN.$$ This is analogous to the {\em orbital varieties} in \cite{CG}, and is also called such by \cite{HKW} in the case $\bP=G_\cO$.
Note that on the level of closed points (which is what we are concerned with, since we only work with the reduced structure), it is clear that $\cV^v_{\bN}/(\tbP \rtimes \C^\times_{rot})=M_v^{\bP,\bN}/L_v$.

We now define the convolution action of $\tilde{\cA}^\hbar_{\bP,\bN}:= \tilde{\cA}^\hbar_{G,N,\bP,\bN}$, following \cite{BFN} and \cite{HKW} (which consider the case $\bP=G_\cO, \bN=N_\cO$). 
\begin{theorem}
	\label{thm:convolution}
	Suppose the stabilizer $L_v$ of $v$ is contained in $\tbP \rtimes \C^\times_{rot}$ and $M_v^{\bP,\bN}$ is ind-proper. Then there is an action of $\tilde{\cA}^{\hbar}_{\bP,\bN}$ on $H_*^{L_v}(M_v^{\bP,\bN})$.
\end{theorem}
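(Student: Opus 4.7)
The plan is to extend the BFN convolution construction to an action of $\tilde\cA^\hbar$ on $H^{L_v}_*(M_v)$, following the template of classical affine Springer theory \cite{VV, OY} and the generalized formalism announced in \cite{HKW}. Concretely, I would construct a convolution correspondence
$$\cR \times M_v \xleftarrow{p} \mathcal{Z} \xrightarrow{m} M_v$$
built from the twisted product $G_\cK \times^{G_\cO} M_v$ (equivalently, a sub-ind-scheme of the BFN convolution space $\cT \tilde\times \cT$ carved out by the conditions that the first factor lies in $\cR$, the second in $M_v$, and the $G_\cK$-product in $M_v$). The action is then defined by the usual formula
$$\alpha \ast \beta := m_*(p^!(\alpha \boxtimes \beta)),$$
using the refined Gysin pullback $p^!$ (well-defined because $p$ is locally a regular embedding into a vector bundle, exactly as in \cite[Section 3]{BFN}) and proper pushforward $m_*$ (well-defined because $m$ is ind-proper, inherited from the ind-properness of multiplication on the affine Grassmannian).

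Associativity of the action follows from a standard triple-space argument: one constructs $\cR \tilde\times \cR \tilde\times M_v$ and checks that the two ways of composing convolutions agree on $H^{L_v}_*(M_v)$, by reduction to associativity of multiplication in $G_\cK$ and the BFN associativity of $\cR \tilde\times \cR$. The hypothesis $L_v \subset \tG_\cO \rtimes \C^\times_{rot}$ is used throughout to restrict the $\tG_\cO \rtimes \C^\times_{rot}$-equivariance of classes in $\tilde\cA^\hbar$ to $L_v$-equivariance compatible with $M_v$; this restriction commutes with $p^!$ and $m_*$ by functoriality of equivariant Borel--Moore homology under a subgroup inclusion.

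The principal obstacle, and the reason the complete proof is deferred to the appendix, is the careful bookkeeping of ind-scheme structures and equivariance. Since $\cR$, $M_v$, and $\mathcal{Z}$ are all infinite-dimensional ind-schemes and the maps $p, m$ involve $G_\cK$, one must present $\mathcal{Z}$ and the triple space as compatible inductive limits of finite-type subvarieties on which $m$ is honestly proper and $p^!$ is defined in the usual sense; this is exactly the approximation-by-slices technique of \cite[Section 2]{BFN}, applied in the slightly more general context of \cite{HKW}. The equivariance is the most subtle point: $M_v$ is not stable under the full left $G_\cO$-action, only under its intersection with $\text{Stab}_{G_\cK}(v)$, and it is precisely the hypothesis $L_v \subset \tG_\cO \rtimes \C^\times_{rot}$ that forces the $L_v$-action on $\mathcal{Z}$ and $M_v$ to be compatible with the ambient $\tG_\cO \rtimes \C^\times_{rot}$-action on $\cR$, so that the correspondence is equivariant and the convolution yields a well-defined $\tilde\cA^\hbar$-module structure on $H^{L_v}_*(M_v)$.
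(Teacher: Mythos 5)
Your high-level template (a BFN/Chriss--Ginzburg style convolution correspondence, a restriction-with-supports pullback, a proper pushforward, and associativity via a triple space) is indeed the shape of the paper's argument, and you correctly locate the subtle point in the equivariance. But as written the construction does not get off the ground, and the step you defer to ``functoriality of equivariant Borel--Moore homology under a subgroup inclusion'' is exactly where the content of the proof lies. The space $M_v\subset \Gr_G$ carries \emph{no} action of $\tG_\cO\rtimes\C^\times_{rot}$ (left multiplication by $h\in G_\cO$ sends $g^{-1}.v$ to $g^{-1}h^{-1}.v$, which need not lie in $N_\cO$), so the twisted product $G_\cK\times^{G_\cO}M_v$ is not defined, and there is no map of groups or spaces along which one can ``restrict'' the $\tG_\cO\rtimes\C^\times_{rot}$-equivariance of classes in $\tilde{\cA}^\hbar$ to an $L_v$-equivariance on a correspondence over $M_v\times M_v$. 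Saying that the hypothesis $L_v\subset\tG_\cO\rtimes\C^\times_{rot}$ ``forces compatibility'' names the problem without solving it.

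The missing idea is the passage to the orbital variety $\cV^v=(\tG^\cO_\cK\rtimes\C^\times_{rot}).v\cap N_\cO$, which \emph{is} stable under the full group, together with the isomorphism $H^{L_v}_*(M_v)\simeq H^{\tG_\cO\rtimes\C^\times_{rot}}_*(\cV^v)$ --- valid precisely because $L_v\subset\tG_\cO\rtimes\C^\times_{rot}$, so that $[\cV^v/(\tG_\cO\rtimes\C^\times_{rot})]$ and $[M_v/L_v]$ agree. After this change of groups, the paper runs the BFN convolution diagram verbatim on $N_\cO$ with coefficients in $\cF_v=\omega_{\cV^v}[-2\dim\tG_\cO]$: the pullback is the restriction-with-supports map for the same map $p:(g,s)\mapsto([g],g.s,s)$ as in \cite{BFN} (so its well-definedness is inherited, rather than requiring a new ``locally a regular embedding'' argument on a correspondence over $\Gr_G$), the identification $\pi_1^!\cF_v\cong\pi_2^!\cF_v$ uses that the action groupoid preserves $\cV^v$, and properness of $m:q(\cP)\to N_\cO$ gives the pushforward. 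Your correspondence $\cR^v\rightrightarrows M_v$ does exist and is used in the paper --- but only in Section \ref{sec:corr}, as an a posteriori reinterpretation for computations, after the action has been constructed on $\cV^v$. Without the orbital-variety step your proof has a genuine gap rather than a routine bookkeeping deficit.
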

\begin{proof}
	Note that there is a natural map 
	\begin{equation}
	\label{eq:projection}
	p: \tG_\cK^\cO \rtimes \C^\times_{rot} \times \bN \to \cT_{\bP,\bN} \times \bN
	\end{equation} given by $$(g,s)\mapsto ([g,s],s).$$ 
	
	Let $L_v$ be the stabilizer of $v$ in $\tG_\cK^\cO\rtimes \C^\times_{rot}$. If $X^{\bP,\bN}_v:=\{g\in \tG^{\cO}_{\cK}\rtimes \C^\times_{rot}|g^{-1}.v\in \bN\}$, there are two natural projections from $X^{\bP, \bN}_v$ to $M_v^{\bP,\bN}$ and $\cV_{\bP,\bN}^v$, which are $\tbP \rtimes \C^\times_{rot}$ and $L_v$-torsors, respectively.
	Taking the equivariant cohomology of the dualizing sheaves, we get
	$$H^{L_v}_*(M_{v}^{\bP,\bN}) \simeq H^{\tbP \rtimes \C^\times_{rot}}_*(\cV^v_{\bP,\bN}),$$
	where the left-hand side makes sense because $L_v$ is compact.   
	
	Consider the groupoid over $\bN$ given by $$\cP_{\bP,\bN}:=\{(g,s)\in \tG^{\cO}_{\cK}\rtimes \C^\times_{rot}\times \bN| g^{-1}.s\in \bN\}\xrightarrow{\pi_1:(g,s)\mapsto s} \bN.$$
	The source, target, unit, multiplication and inverse maps are defined as in \cite[Section 4.2]{HKW}.
	Note that there is another projection map $\pi_2$ to $\bN$ given by $(g,s)\mapsto g^{-1}.s$. Then consider $\cF^v_{\bP,\bN}:=\omega_{\cV_{\bP,\bN}^v}[-2\dim \tbP]$, which is an object in the $\tbP \rtimes \C^\times_{rot}$-equivariant derived category of $\bN$. Here we use, as everywhere in this paper, the grading shift like defined in \cite{BFN} and Appendix \ref{appendix}. We have a natural isomorphism $$\pi_1^*\cF^v_{\bP,\bN}\cong \pi_2^*\cF^v_{\bP,\bN}.$$
	
	By definition we have $p^{-1}(\cR_{\bP,\bN}\times \bN)=\cP_{\bP,\bN}$, and that $m\circ q=\pi_2$, 
	$\pi\circ j=\pi_1$, where $$\pi: \tG_{\cK}^{\cO}\rtimes \C^\times_{rot} \times \bN\to \bN$$ is the projection.

	Consider then the  following diagram:
	\begin{center}
\includegraphics{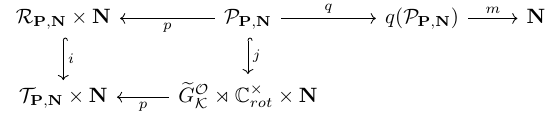}
		\label{eq:convolutiondiagram}
	\end{center}
	Here $p$ is as above, $q$ is quotient by the $\tbP \rtimes \C^\times_{rot}$-action $h.(g,s)=(gh^{-1},h.s)$ and $m$ is the multiplication map $[g,s]\mapsto g.s$. The composition $m \circ q$ is the above map $\pi_2: \cP_{\bP,\bN} \to \bN$.
	
	Using the ``restriction with support" map of Section \ref{sec:rws} (see \cite[Section 3(ii)]{BFN}) applied to the leftmost Cartesian square, and the map 
	$$p^*(\omega_{\cT_{\bP,\bN}}[-2\dim \bN]\boxtimes \cF^v_{\bP,\bN})\cong \omega_{\tG_{\cK}^\cO\rtimes \C^\times_{rot}}[-2\dim \tbP\rtimes \C^\times_{rot}]\boxtimes \cF^v_{\bP,\bN}$$
	we get a map (omitting the shifts for sake of readability)
	\begin{align}
	p^*:\, &H_{\tbP \rtimes \C^\times_{rot} \times \tbP \rtimes \C^\times_{rot}}^{-*}(\cR_{\bP,\bN}\times \bN, \omega_{\cR_{\bP,\bN}}\boxtimes \cF^v_{\bP,\bN})\nonumber\\ 
	& =H_*^{\tbP\rtimes \C^\times_{rot}}(\cR_{\bP,\bN})\otimes H_*^{\tbP\rtimes \C^\times_{rot}}(\cV^v_{\bN})\to H^*_{\tbP\rtimes \C^\times_{rot}\times \tbP\rtimes \C^\times_{rot}}(\cP_{\bP,\bN}, \pi_1^!\cF^v_{\bP,\bN}).
	\end{align}
	
	Since $\cF^v_{\bP,\bN}$ is a $\bP$-equivariant complex, we have
	$\pi_1^!\cF^v_{\bP,\bN}\cong \pi^!_2\cF^v_{\bP,\bN}$ and since $\pi_2=m\circ q$, we get 
	$$H^*_{\tbP\rtimes \C^\times_{rot}}(\cP_{\bP,\bN}, \pi_1^!\cF^v_{\bP,\bN})=H^*_{\tbP\rtimes \C^\times_{rot}}(q(\cP_{\bN}), m^!\cF^v_{\bP,\bN})$$
	
	Finally, $m$ is (ind-)proper because its fibers are closed subvarieties of a partial affine flag variety, so that using the adjunction $m_!m^!\to \id$ we get a map
	$$(m\circ q)_*: H_*^{\tbP \rtimes \C^\times_{rot}}(q(\cP_{\bP,\bN}),m^!\cF^v_{\bP,\bN})\to H_*^{\tbP \rtimes \C^\times_{rot}}(\cV^v_{\bP,\bN}).$$
	
	In particular, composing gives us an ``intersection pairing" $$\star:=(m\circ q)_*p^*:H^{\tbP \rtimes \C^\times_{rot}}_*(\cR_{\bP,\bN}) 
	\times H^{L_v}_*(M^{\bP,\bN}_v) \to H^{L_v}_*(M^{\bP,\bN}_v).$$ This is clearly bilinear over $\Q$. We prove the associativity in Lemma \ref{lemma:associativity} and the fact that the identity acts by $1$ in Lemma \ref{lemma:unit}.
\end{proof}

\subsubsection{The case of Hilbert schemes}
Specializing the construction of Theorem \ref{thm:convolution} to $N=\Ad \oplus V$ and $\bP=G_\cO, \bN=N_\cO$, the $L_v$-equivariant homology of the GASF $M_v$ admits an action of the spherical rational Cherednik algebra of $\fg\fl_n$. Similarly, for $\bP=\bI, \bN=\Lie(\bI)\oplus \cO^n$ we get an action of the (full) RCA of $\fg\fl_n$, as we now describe.
\begin{definition}
	\label{def:cherednikalg}
	The rational Cherednik algebra of $\fg\fl_n$ is the quotient algebra
	$$\overline{\cH}_n=\frac{\C[\hbar,m]\langle x_1,\ldots,x_n,y_1,\ldots,y_n\rangle\rtimes\C\Sn}{\sim}$$
	where $\sim$ consists of the relations $[x_i,x_j]=[y_i,y_j]=0$ for all $i, j$, and 
	$$[y_i,x_j]=\begin{cases}
	-\hbar+m\sum_{k\neq i} (i\,k) & \text{ if } i=j,\\
	-m(i\,j) & \text{ if } i \neq j.
	\end{cases}$$
	
	The {\em spherical subalgebra} is defined as $e\overline{\cH}_ne$ where $e=\tfrac{1}{n!}\sum_{w\in \Sn} w$. We often refer to the spherical subalgebra simply as the spherical rational Cherednik algebra of $\fg\fl_n$. 
\end{definition}
\begin{remark}
	To match with the conventions in most other sources, we should specialize $\hbar \to -1$. Indeed, it is the specialized algebra which will act on the equivariant homology as in Theorem \ref{thm:glnirrep} and for example \cite{OY}.
\end{remark}
We record the following theorems of Kodera-Nakajima and Braverman-Etingof-Finkelberg \cite{KN, BEF} (see also \cite{WebDO,WebLP}).
\begin{theorem}[\cite{KN}]
	\label{thm:rca}
	For $G = GL_n, N = \Ad\oplus V,$ the quantized BFN algebra $\tilde{\cA}^{\hbar}_{G,N}$ is isomorphic to the {\em spherical rational Cherednik algebra} of $\fg\fl_n$.
\end{theorem}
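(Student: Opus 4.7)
The theorem is attributed to Kodera--Nakajima \cite{KN}, so the plan is to outline the strategy of their argument, which proceeds via the abelianization technique of Braverman--Finkelberg--Nakajima and an explicit matching of generators.

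First, I would invoke the abelianization embedding from \cite[Section 5]{BFN}: if $T\subset G$ is a maximal torus and $W$ its Weyl group, then $\tilde\cA^\hbar_{G,N}$ embeds into (a localization of) the $W$-invariants of the torus Coulomb branch $\tilde\cA^\hbar_{T,N}$. The latter is computed explicitly: generators are the equivariant cohomology classes of $\Gr_T$ (one ``monopole'' generator $u^\lambda$ for each cocharacter $\lambda\in X_*(T)$) together with the polynomial ring $H^*_{T\times\C^\times_{rot}}(\mathrm{pt})=\C[\fh][\hbar]$, with relations
\[
u^\lambda\cdot u^\mu = \Big(\prod_{\chi}\prod_{i}(\chi+i\hbar)\Big)\, u^{\lambda+\mu},
\]
the product being over weights $\chi$ of $N$ and integers $i$ in a range depending on $\langle\chi,\lambda\rangle$ and $\langle\chi,\mu\rangle$. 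For $N=\Ad\oplus V$, weights of $\Ad$ are the roots $\alpha$ (with the Cartan part contributing nothing to the shift) and weights of $V$ are the coordinate characters $\varepsilon_i$.

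Second, I would write the spherical rational Cherednik algebra of $\fg\fl_n$ in its standard presentation: it is generated by the power sums $\sum x_i^k$, by the invariants $\sum D_i^k$ of the Dunkl operators
\[
D_i = \hbar\,\partial_{x_i} - m\sum_{j\neq i}\frac{1-s_{ij}}{x_i-x_j},
\]
and by the Euler operator, subject to the $\fs\fl_2$-triple and Weyl-invariance relations. The plan is to match the fundamental monopole operators $u^{\pm\omega^\vee_i}$ in $\tilde\cA^\hbar_{G,N}$ with the elementary symmetric functions in the $D_i$ (respectively $x_i$), after a suitable change of variables in which the flavor parameter for the $V$-summand is identified with the Cherednik parameter $m$. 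Concretely, the shift $\prod(\alpha+i\hbar)$ coming from the $\Ad$ part produces precisely the $\frac{1}{x_i-x_j}$-poles of the Dunkl operators after taking $W$-invariants, while the shift from $V$ inserts the factor $m$.

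The main obstacle is the last step: checking that the explicit monopole formulas, after restriction to the Weyl-invariant subalgebra and use of the abelianization embedding, reproduce the Dunkl-operator relations (commutators of $\sum D_i^k$ with $\sum x_i^\ell$, etc.) on the nose. This is the technical heart of \cite{KN}, and requires a residue calculation identifying the BFN convolution product in the torus with the Demazure--Lusztig type operators that appear in the Cherednik side. Once this matching is carried out on a generating set, Weyl-invariance, the filtration by cocharacter, and a dimension count (e.g.\ by comparing associated graded algebras with $\C[T^*\fh]^W$) show that the map is an isomorphism, not merely an injection.
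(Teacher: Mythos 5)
The paper does not actually prove Theorem \ref{thm:rca}: it is imported from \cite{KN} and used as a black box, so there is no internal argument to compare your proposal against. As a summary of the cited proof your outline is essentially accurate — embed $\tilde{\cA}^{\hbar}_{G,N}$ into a localization of the $W$-invariants of the abelianized algebra $\tilde{\cA}^{\hbar}_{T,N}$ via \cite[Section 5]{BFN}, compute the monopole relations there explicitly, match the minuscule monopole operators against symmetric functions in the $x_i$ and in Dunkl operators, and close the argument with a generation/filtration comparison. One detail, however, is backwards and would derail the explicit matching if carried out as written: in the conventions of this paper (see the remark immediately following the theorem) the flavor torus $G_F=\C^\times_{dil}$ scales the $\Ad$ summand with weight $1$ and acts \emph{trivially} on $V$. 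Hence the Cherednik parameter $m$ enters through the shifted adjoint weights $\varepsilon_i-\varepsilon_j+m$, and it is the $\Ad$ summand that produces both the root-direction structure and the factor of $m$ in the reflection terms $m\sum_{j\neq i}\frac{1-s_{ij}}{x_i-x_j}$ of the Dunkl operators, while the weights $\varepsilon_i$ of $V$ contribute the $m$-independent factors (compare the two products in the numerator of Eq.~\eqref{eq:SCAaction}). Your sketch attaches the flavor parameter to the $V$ summand and has $V$ "inserting the factor $m$"; with that assignment the generators would not reproduce the Dunkl relations. Beyond this, the proposal explicitly defers the technical core (the residue/commutator verification and surjectivity) to \cite{KN}, which is consistent with how the paper itself treats the statement, but means the proposal is an outline of the citation rather than an independent proof.
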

\begin{theorem}[\cite{BEF}]
	For $G = GL_n, N = \Ad\oplus V, \bP=\bI, \bN=\Lie(\bI)\oplus \cO^n$, the quantized BFN algebra $\tilde{\cA}^{\hbar}_{G,N,\bP,\bN}$ is isomorphic to the {\em rational Cherednik algebra} of $\fg\fl_n$.
\end{theorem}
\begin{remark}
	The extended group $\tG$ in the above theorems is simply $G \times G_F$ where $G_F=\C^\times_{dil}$ acts by scaling $\Ad$ with weight $1$ and $V$ with weight 0.
\end{remark}

In the situation of Theorem \ref{thm:maincomparison} we get 
\begin{corollary}
	The spherical rational Cherednik algebra $e\overline{\cH}_ne$ of $\fg\fl_n$ acts on  
	$H_*^{L_v}(\Hilb^\bullet(\hC))$ where $L_v$ is the stabilizer in $\tG_\cK^\cO\rtimes \C^\times_{rot}$ of $v\in \Ad(\cK)\oplus \cK^n$ associated to $\hC$ as in Theorem \ref{thm:maincomparison}.
\end{corollary}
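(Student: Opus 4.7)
The corollary combines three ingredients established above: the geometric isomorphism of Theorem \ref{thm:maincomparison}, the general convolution action of Theorem \ref{thm:convolution}, and the Kodera--Nakajima identification of Theorem \ref{thm:rca}. The plan is to chain them together.

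First, I would use Theorem \ref{thm:maincomparison} to identify $\Hilb^\bullet(\hC)$ with the generalized affine Springer fiber $M_v$ for $v=(\gamma,e_1)\in \Ad(\cK)\oplus \cK^n$. The isomorphism $\varphi$ constructed there sends a class $[g]\in M_v$ to the fractional ideal $(\cO^n)^* g^{-1}$, and therefore intertwines the residual left multiplication action of $L_v\subset \tG_\cK^\cO\rtimes \C^\times_{rot}$ on $M_v$ (including the $\C^\times_{rot}$-scaling of $t$) with the transported action on $\Hilb^\bullet(\hC)$. Consequently the equivariant Borel--Moore homologies agree, $H_*^{L_v}(\Hilb^\bullet(\hC))\cong H_*^{L_v}(M_v)$, and it suffices to produce the action on the right-hand side.

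Second, I would invoke Theorem \ref{thm:convolution} with $(G,N)=(GL_n,\Ad\oplus V)$ to produce a convolution action of $\tilde{\cA}^\hbar_{G,N}$ on $H_*^{L_v}(M_v)$. To apply that theorem one must check the hypothesis $L_v\subset \tG_\cO\rtimes \C^\times_{rot}$. In the quasihomogeneous case this is precisely the content of Lemma \ref{lemma:nkstab}: the generating cocharacter $\nu\mapsto (\diag(1,\nu^k,\ldots,\nu^{(n-1)k}),\nu^{-k},\nu^n)$ has constant (hence $\cO$-valued) matrix part. For generic $\hC$ the stabilizer is trivial by the remark following that lemma, so the containment is automatic.

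Finally, Theorem \ref{thm:rca} identifies $\tilde{\cA}^\hbar_{GL_n,\Ad\oplus V}$ with the spherical rational Cherednik algebra $e\cH_n e$, so transporting this action through $\varphi$ delivers the claimed action on $H_*^{L_v}(\Hilb^\bullet(\hC))$. The main obstacle I anticipate is not in any single step but in verifying the stabilizer containment uniformly across all $\hC$ covered by Theorem \ref{thm:maincomparison}: outside the quasihomogeneous and generic regimes one has to inspect the companion matrix $\gamma$ of $f$ directly, though in all such cases one expects the stabilizer of $v$ to already live in $\tG_\cO\rtimes \C^\times_{rot}$, possibly after passing to an appropriate subtorus.
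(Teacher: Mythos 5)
Your proposal is correct and matches the paper's (implicit) argument exactly: the corollary is stated without proof precisely because it is the immediate concatenation of Theorem \ref{thm:maincomparison}, Theorem \ref{thm:convolution} with $(G,N)=(GL_n,\Ad\oplus V)$, and Theorem \ref{thm:rca}. Your extra care in checking the hypothesis $L_v\subset\tG_\cO\rtimes\C^\times_{rot}$ is sound and in fact covers all cases, since Lemma \ref{lemma:nkstab} and the remarks following it show the stabilizer always has constant matrix part (quasihomogeneous and nilpotent cases) or is trivial (nonhomogeneous $\gamma$).
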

\begin{corollary}
	\label{cor:fullrcaaction}
	The rational Cherednik algebra $\overline{\cH}_n$ of $\fg\fl_n$ acts on $H_*^{L_v}(\PHilb^{\bullet}(\hC))$.
\end{corollary}
\begin{remark}
	The action in Corollary \ref{cor:fullrcaaction} coincides by \cite[Section 7]{WebDO} with that studied in \cite{GSV}. Both papers use a different set of generators than us, and we compare their construction to ours in Section \ref{sec:gsv}.
\end{remark}

\subsection{Comparison of the convolution action to an action by correspondences}
\label{sec:corr}
For many of our results, in particular Theorem \ref{thm:glnirrep}, we will need to compare the convolution action from Theorem \ref{thm:convolution} to another action by correspondences. We will do this again in greater generality than needed for the rest of the paper. In particular, we make rigorous expectations from \cite{BDG} and \cite{BDGH}.

\begin{definition}
	\label{def:raviolo}
	Define the {\em raviolo space/Hecke stack} for $v$
	which has $\C$-points given by $$\cR^{v}_{\bP,\bN}(\C) = \{(s_2, g, s_1)\in \cV^v_{\bN}\times (\tG^\cO_\cK\rtimes \C^\times_{rot})  \times \cV^v_{\bN} | g.s_1 = s_2\}/\tbP \rtimes \C^\times_{rot}.$$
	Explicitly, the element $g' \in \tbP \rtimes \C^\times_{rot}$ acts as $g'.(s_2, g, s_1) = (s_2, g g'^{-1}, g' s_1)$.
\end{definition}
\begin{definition}
	\label{def:traviolo}
	Define also $$\cT^{v}_{\bP,\bN}(\C) = \{(s_2, g, s_1)\in \cW^v\times (\tG^\cO_\cK\rtimes \C^\times_{rot})  \times \cV^v_{\bN} | g.s_1=s_2\}/\tbP \rtimes \C^\times_{rot},$$ where $\cW^v:=(\tG^\cO_\cK\rtimes \C^\times_{rot}).v\subset N(\cK)$.
\end{definition}

Next, note that $\cR^{v}_{\bP,\bN}$ is a locally closed sub-ind-variety of $\cR_{\bP,\bN}$ via $[s_2,g,s_1]\mapsto [g,s_1]$. The space $\cR_{\bP,\bN}$ has a natural stratification pulled back from the Schubert stratification of $\Fl_\bP=\bigsqcup_{w\in W_\bP\backslash W^{aff}/W_{\bP}} \bP w \bP$ and therefore $\cR_{\bP,\bN}^v$ inherits a stratification $\cR^{v}_{\bP,\bN, w} := \cR^{v}_{\bP,\bN} \cap \cR_{\bP,\bN, w}$ (ditto for $\cT^v_{\bP,\bN}, \cT_{\bP,\bN}$). Here $w\in W_{\bP}\backslash W^{aff}/W_\bP$ is a double  coset for the extended affine Weyl group of $G$. We also have maps
\begin{center}
\includegraphics{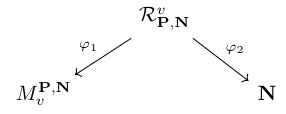}
\end{center}
where $\varphi_1$ is the $\tbP \rtimes \C^\times_{rot}$-
equivariant projection map $$\varphi_1: [s_2,g,s_1]\mapsto [s_1],$$ whose restriction to 
$\cR^{v}_{\bP, \bN, w}$ is smooth (because it is a base change from the smooth morphism $
\cR_{\bP,\bN,w}\to \Fl_\bP$), and $\varphi_2$ is another proper equivariant projection given by $$\varphi_2: [s_2,g,s_1]\mapsto s_2,$$ whose image is naturally identified with $\cV^v_{\bN}$.

The map in Equation \eqref{eq:projection} restricts to \begin{eqnarray}
\label{eq:ravioloproj}
p: p^{-1}(\cR_{\bP, \bN}\times \cV_{\bN}^v)& \to \cR_{\bP, \bN}^v\times \cV_{\bN}^v \\
p: \tG^\cO_\cK\rtimes \C^\times_{rot}\times \cV_{\bN}^v &\to \cT_{\bP,\bN}^v\times \cW^v\end{eqnarray}
and $q(p^{-1}(\cR_\bP\times \cV_\bP^v))\cong \cR_\bP^v$ by the right quotient. Note that when the stabilizer of $v$ is trivial, we have $p^{-1}(\cR\times \cV^v)\cong \cV^v\times \cV^v$ by $(g,s)\mapsto (s,g.s)$. Our goal is to interpret the ``push-pull" maps in equivariant cohomology of $\cV^v$ giving rise to the action.

Note that $q_*p^*$, where $p^*$ is defined in Theorem \ref{thm:convolution}, defines a map $$H_*^{\tbP\rtimes \C^\times}(\cR_{\bP,\bN}^v\times \cV_{\bN}^v)\to H^*_{\tbP\rtimes\C^\times_{rot}}(\cP_{\bN},\pi_1^!\cF^v_{\bP, \bN})=
H^{\tbP\rtimes\C^\times_{rot}}_*(\cR_{\bP,\bN}^v).$$

Given a class $\beta \in \cA_{\bP, \bN,\leq w}^\hbar$ and $\alpha\in H_*^{L_v}(M^{\bP, \bN}_v)\cong H_*^{\tbP\rtimes \C^\times}(\cV_{\bN}^v)$ 
we have that $q_*p^*(\beta \otimes \alpha)$ is identified with the restriction of the map $q_*p^*$ to $\cR^v_{\bP, \bN, \leq w}\times \cV^v_{\bN}$. 

In particular, whenever $\cR_{\bP, \bN,w}$ is closed (for example, minuscule in the $G_\cO$-case) there is a well-defined class $[\cR_{\bP, \bN, w}]\in\cA_{\bP, \bN,\leq w}^\hbar$. By smoothness of the maps in Eq. \eqref{eq:ravioloproj} and the natural inclusion $\cR_{\bP, \bN}\to \cT_{\bP, \bN}$ we may then use the ``classical" refined pullback map as in \cite{Fulton} to compute $q_*p^*([\cR_{\bP, \bN, w}]\otimes \alpha)$ given good enough understanding of $\cR^v_{\bP, \bN}$ and how it sits in $\cT_{\bP, \bN}^v$.
Moreover,   $m_*: H_*^{\tbP\rtimes\C^\times_{rot}}(\cR_{\bP, \bN}^v)\to H^{L_v}_*(M_v^{\bP, \bN})$ as given as in Theorem \ref{thm:convolution} is identified with $\varphi_{2,*}$.
In Section \ref{sec:fixedpts} we will see that it is possible to compute $(m\circ q)_*p^*$ using this interpretation in the abelian setting, which enormously simplifies computations.

\subsubsection{The case of Hilbert schemes} 
Suppose now $\lambda=\omega_1=(1,0,\ldots, 0)$ is the first minuscule coweight of $GL_n$ and we are in the setting of Theorem \ref{thm:maincomparison}. We will compare our results to those of \cite{Kiv1}. Denote for ease of notation $\cR:=\cR_{GL_n,\Ad\oplus V}$. We have the following proposition.
\begin{proposition}
We have an isomorphism of stacks 
$$[\tG_\cO\rtimes \C^\times \backslash \cR_{\leq \lambda}^v]\cong [L_v\backslash \Hilb^{\bullet,\bullet+1}(C)]$$ where $\Hilb^{\bullet,\bullet+1}(C)$ is the flag Hilbert scheme of $C$ consisting of finite colength ideals $I_1\subset I_2\subset \cO_C$ with $\dim I_2/I_1=1$. Therefore, $H_*^{\tG_\cO\rtimes \C^\times}(\cR_{\leq \lambda}^v)\cong 
H^{L_v}_*(\Hilb^{\bullet,\bullet+1}(C))
$. While the definitions of most of the objects below make sense in general, in order to apply the deformation theory results from \cite{Kiv1,She12}, we need to assume $C$ is the germ of a {\em reduced} (but possibly reducible) curve.

\end{proposition}
\begin{proof}
Recall that $\Gr_{GL_n}^{\leq \omega_1}$ can be identified with lattices $\Lambda \subset\cO^n$ such that $\ell(\cO^n/\Lambda)=1$. Using this, we check that the points of $[\tG_\cO\rtimes \C^\times \backslash \cR_{\leq \omega_1}^v]$ correspond to points of $\Hilb^{\bullet,\bullet+1}(C)$ up to the stabilizer action on $v$.
\end{proof}

Recall from the proof of \cite[Theorem 1.1]{Kiv1} the diagram 

\begin{equation}
	\label{fig:hilbdiagram}
	\begin{aligned}
\includegraphics{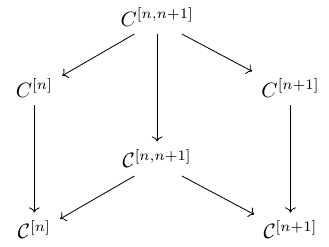}
	\end{aligned}
\end{equation}
Where the spaces in the lower row correspond to the total families of (flag) Hilbert schemes associated to a versal deformation of $C$. We will now use this diagram for $C$ a germ of a curve singularity 
(as in the rest of the paper).

\begin{theorem}
The map $\varphi^*_1=q_*p^*$ can be identified with the refined pullback map associated to the left square in Diagram \eqref{fig:hilbdiagram} coming from the embedding $C\hookrightarrow \cC$ (the resulting pullback is also denoted ``$p$" in \cite{Kiv1}) Similarly, if $\lambda=(0,\ldots, -1)$, we recover the refined pullback associated to the right square (called ``$q^*$" in {\em loc. cit.}).
\end{theorem}
\begin{remark}
Note that this only covers part of the main result of \cite{Kiv1}. In particular, the other maps defined in {\em loc. cit.} use global (projective) curves in their definition.
\end{remark}
\begin{proof}
The affine Grassmannian of $\GL_n$ is the increasing union of the projective varieties 
$$\Gr_{\GL_n}^d:=\{\Lambda\subset \cK^n|t^{d}\cO^n\subset \Lambda \subset t^{-d}\cO^n\}.$$
It is clear that $M_v$ as in Theorem \ref{thm:maincomparison} corresponding to the germ of a curve $\hC$ has $M_v^{m}:=\bigsqcup_{i=0}^m \Hilb^i(\hC)\subset M_v$ contained in $\Gr_{\GL_n}^d$ for all $m$ and some $d$ depending on $m$.

Let moreover $N_d:=N(\cO)/t^d N(\cO)$ and $\cV^v_d$ be the image of $\cV^v$ in the quotient by $t^d$. Let also $\cR^d:= \{[g,s]\in \Gr_G^d\times^{G(\cO)/t^d} N_d| g^{-1}. s\in N_d\}$. Then $\cR$ is  the colimit of $\cR^d$ for the inclusions coming from $\Gr_G^d\hookrightarrow \Gr_G^{d+1}$, in particular the equivariant Borel-Moore homology is the corresponding colimit.

Choose $d\gg 0$ and some open neighborhood $U$ of $v\in N_d$. Then choosing some transversal slice $S$ to $\cV^v_d$, we locally have 
$\cV^v_d\times S\cong U$, as in \cite[Proposition 6.5.13]{CG}.  In particular, if we let $\varphi: \cR^d\to N_d$ be the projection, and
$$\Sigma:=\varphi^{-1}(\cV^v_d), \; \Sigma_{U}:=\varphi^{-1}(U\cap \cV^v_d)$$ then $$\Sigma_U\cong (\cV^v\cap U)\times M_v^d.$$

Consider the inclusion $\cV^v_d\cap  U\hookrightarrow U$. The map $\varphi^{-1}(U)\to U$ is smooth, so we get a refined pullback map \cite{Fulton}
$$\varphi^*: H_*^{\tG(\cO)/t^d\rtimes \C^\times}(\cV^v_d\cap U)\to H_*^{\tG(\cO)/t^d\rtimes \C^\times}(\Sigma_U).$$ We will in fact abuse notation and denote by $\varphi^*$ the composition of this map and the pushforward
$$H_*^{\tG(\cO)/t^d\rtimes \C^\times}(\Sigma_U)\to  H_*^{\tG(\cO)/t^d\rtimes \C^\times}(\Sigma).$$

Possibly further increasing $d$ and throwing away some high codimension subset of $U$ not containing $v$, we claim that it is possible to identify $\varphi^{-1}(U)\to U$ with 
the family of Hilbert schemes of $0,1,\ldots, d$ points (i.e. the union thereof). Indeed, the fibers at $v=(\gamma,e)\in U$ of the map $\varphi$ are by the proof of Theorem \ref{thm:maincomparison} identified with Hilbert schemes of (at most $d$, since we truncate by $t^d$) points on the curve $\Spec \C[[t]][\gamma]$, whenever $e$ is a cyclic vector for $v$. Since having a cyclic vector is an open condition, we get a family of Hilbert schemes of points on some curves.

Since $N_d$ is the space of {\em all} matrices and vectors in $\cO/t^d$, the associated family of (germs of) planar curves is versal for large enough $U$. In particular, the characteristic polynomial map $\Ad(\cO)\to \C[[t,x]]$ restricted to the natural section $\Ad(\cO/t^d)\to \Ad(\cO)$ contains all degree $\deg(f)$ polynomials, and it is a standard result that this ensures versality, see e.g. \cite[Proposition 17]{She12}.

In turn, this implies the associated total spaces of the Hilbert schemes of points and the one-step flag Hilbert schemes of $C$ are smooth, by \cite[Proposition 17]{She12} and \cite[Proposition 2.6]{Kiv1} .

Further restricting $\varphi$ to $\varphi^{-1}(U)\cap \cR^{\leq\lambda}_d$ for the cocharacter $\lambda=(1,\ldots, 0)$
identifies the refined intersection map $p^*$ for the inclusion $v\hookrightarrow U$ in \cite[Definition 3.4]{Kiv1} with $\varphi^*_{\leq \lambda}$. The other case is similar.
\end{proof}
In particular, this gives an interpretation of one of the Weyl algebras appearing in \cite{Kiv1,Ren}. Recall that in {\em loc.cit.} it was shown that a certain subalgebra of the Weyl algebra of $\A^{2\cdot d}$ (where $d$ is the number of irreducible components of $C$) containing $\text{Weyl}(\A^1)\otimes \text{Weyl}(\A^1) \cong \text{Weyl}(\A^2)$ acts on the homologies of the Hilbert schemes of points in question. The other $\text{Weyl}(\A^1)$ in the algebra has to do with the Hilbert schemes of global curves and cannot be defined in our setting. Indeed, the definition of this other Weyl algebra  as a subalgebra of the one constructed in {\em loc.cit.} depends on the number of components of the curve, whereas our Cherednik algebra depends on the {\em degree} of the curve.
\begin{remark}
	It is remarkable to note that the convolution action works on the level of punctual Hilbert schemes directly. In \cite{Kiv1} and \cite{Ren}, one of the main points is to define convolution maps for the Hilbert schemes of (locally planar) singular curves using refined intersection products, which are constructed by deforming the singularities as we saw above. The role of the deformation in our context is played by considering the infinite-dimensional ind-variety $\cV^v$ in place of $M_v$. Note also that the ``restriction with supports" map is a refined intersection product in the case of a regular embedding, while here we use a rather special form of the map $p$, which is very far from anything like a regular embedding, but rather a principal bundle. 
\end{remark}

\subsection{Localization to fixed points}
\label{sec:fixedpts}
Let us analyze the construction of Theorem \ref{thm:convolution} first in the case $G=T$ is a torus. In this case, $\cR_T$ is a collection of (infinite rank) vector bundles over a discrete set $\Gr_T\cong X^*(T)$, of finite codimension in $\cT$.
Its complex points are $$\cR_T(\C) = \{(g,s)\in \tT^\cO_\cK\rtimes \C^\times_{rot} \times N(\cO) : g^{-1}.s \in N_\cO\}/\tT_\cO\rtimes \C^\times_{rot},$$ and the map $\pi_T: \cR_T \to \Gr_T$ given by forgetting $s$. The map $$\tT_\cK^\cO\times N_\cO\to \cT_T\times N_\cO$$ is simply many copies of the quotient map $$\C((t))^\times\to \C((t))^\times/\C[[t]]^\times.$$

Fix now $G$ reductive and $T$ a maximal torus in it. We may think of $\cR_T$ as an ``abelianized" BFN space for $G$, as it also admits an inclusion map $\iota: \cR_T \hookrightarrow \cR$ via inclusion of $\Gr_T\hookrightarrow \Gr_G$. The space $\cR_T$ has a natural convolution product and it admits a natural action of the Weyl group $W$. By Lemma 5.10 of \cite{BFN} there is an algebra homomorphism $(\iota_\cR^W)_*:(\cA_T^\hbar)^W \to\cA^\hbar$ coming from the inclusion $\iota_\cR: \cR_T \hookrightarrow \cR.$ We call $\cA_T^\hbar$ the ``abelianized" BFN algebra. This construction generalizes to the flavor deformed algebras $(\tilde{\cA}_T^\hbar)^W \to\tilde{\cA}^\hbar$, where $\tilde{\cA}^\hbar_{T}:= H^{\tT_\cO\rtimes \C^\times_{rot}}_*(\cR_T)$.

Consider $\cR^{v}_T = \cR_T \cap \cR^{v}$.
By definition of the generalized affine Springer fiber for $v$, where we consider $N$ as a representation of $T\subset G$, we see that $\cR^v_T$ is the Hecke stack associated to the datum $(T,N,v)$.
Using the convolution action of Theorem \ref{thm:convolution} for $(T,N)$, we get an action of $\tilde{\cA}_T^\hbar$ on $H^{L_{T,v}}_*(M_{v,T})$ where $L_{T,v}$ is the stabilizer of $v$ in $T$.

We can now try to compare the two actions.

\begin{proposition}
	\label{prop:torusloc}
	Suppose $\Gr_G$ has isolated fixed points under the stabilizer $L_v\subset \tG_\cK^\cO\rtimes \C^\times_{rot}$ of $v$ and that $L_v$ is contained in $\tT_\cO \rtimes \C^\times_{rot}$. Suppose also $M_v$ is equivariantly formal for $L_v$. Then 
	\begin{enumerate}
		\item $M_{v,T}=M_v^{L_v}$ 
		\item $(\iota_{M_v})_*: H_*^{L_v}(M_v^{L_v})\to H_*^{L_v}(M_v)$ becomes an isomorphism after inverting countably many characters of $L$.
		\item $(\iota_{M_v})_*$  intertwines the actions of $(\tilde{\cA}^\hbar_{T,N})^W$ and $\tilde{\cA}^\hbar_{G,N}$.
	\end{enumerate}
\end{proposition}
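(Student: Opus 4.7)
The plan is to address the three claims in turn. Claim (1) is a definitional unpacking using the hypothesis $L_v\subset \tT_\cO\rtimes \C^\times_{rot}$; claim (2) is equivariant localization applied level-by-level to the ind-scheme $M_v$; and claim (3) reduces to a base-change compatibility between the convolution diagrams attached to $(G,N)$ and $(T,N)$.

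For (1), the assumption $L_v\subset \tT_\cO\rtimes \C^\times_{rot}$ combined with the isolated fixed point hypothesis gives $\Gr_G^{L_v}=\Gr_T$: the inclusion $\Gr_T\subset \Gr_G^{L_v}$ always holds because $\tT_\cO\rtimes \C^\times_{rot}$ fixes $\Gr_T$ pointwise (the loop rotation acts on $t^\lambda$ by a $T(\cO)$-gauge transformation, and $\tT_\cO$ commutes with $T(\cK)$), and any additional fixed point would lie on a positive-dimensional $T$-orbit, violating isolation. For $[g]\in \Gr_T$ the condition $g^{-1}.v\in N(\cO)$ is the same whether $N$ is viewed as a $G$- or $T$-representation, so $M_v\cap \Gr_T = M_{v,T}$ and therefore $M_v^{L_v}=M_{v,T}$.

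For (2), filter $M_v$ by the natural $L_v$-stable projective subvarieties $M_v^{\leq m}:=\bigsqcup_{0\leq k\leq m}\Hilb^k(\hC)$ coming from the decomposition $M_v=\bigsqcup_m M_v^m$ noted in Section \ref{sec:hilb}. Each $M_v^{\leq m}$ has finitely many $L_v$-fixed points, so Atiyah--Bott / equivariant localization makes the fixed-point pushforward an isomorphism on equivariant Borel--Moore homology after inverting the finitely many tangent characters at those fixed points. Passing to the colimit in $m$ yields the desired isomorphism after inverting the countable union of these character sets.

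For (3), embed the abelian convolution diagram into the nonabelian one: the inclusions $\iota_\cR:\cR_T\hookrightarrow \cR$ and $\iota_{M_v}:M_{v,T}\hookrightarrow M_v$ induce a morphism from the $T$-version of the convolution diagram of Theorem \ref{thm:convolution} to the $G$-version, with each relevant square behaving Cartesianly for equivariant base change. Combining Lemma 5.10 of \cite{BFN} --- which identifies $(\iota_\cR^W)_*:(\tilde{\cA}^\hbar_T)^W\to \tilde{\cA}^\hbar$ as an algebra map --- with proper base change for the multiplication map $m$ and compatibility of the restriction-with-supports map $p^*$ with closed inclusions, one reads off the intertwining identity $(\iota_{M_v})_*(b\star\alpha)=(\iota_\cR^W)_*(b)\star(\iota_{M_v})_*(\alpha)$ for every $W$-invariant $b\in \tilde{\cA}^\hbar_T$ and $\alpha\in H^{L_v}_*(M_{v,T})$. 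The main obstacle will be precisely (3): verifying that the required squares are honestly Cartesian (or behave so for the purposes of the equivariant derived category), and tracking the Weyl symmetry through the ind-scheme structure of $\cP$ and the raviolo spaces of Definition \ref{def:raviolo}. I expect this to be a careful refinement of the argument of \cite[Lemma 5.10]{BFN} adapted to the GASF setting of Sections \ref{sec:gasf} and \ref{sec:sca}, along the lines of the preliminary results of \cite{HKW}.
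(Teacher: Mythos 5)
Your strategy coincides with the paper's on all three parts: (1) is the same observation that the isolation hypothesis forces the $L_v$-fixed locus of $\Gr_G$ down to $\Gr_T$, and (2) is Atiyah--Bott localization, exactly as in the paper. The problem is part (3), where you have written a plan rather than a proof, and the step you defer is precisely the content of the paper's argument. The paper builds a tower of squares relating the $(G,N)$ convolution diagram of Theorem \ref{thm:convolution} to its $(T,N)$ analogue, and the entire weight of the hypothesis $L_v\subset\tT_\cO\rtimes\C^\times_{rot}$ falls on verifying that the middle square of that tower --- comparing $\tG^\cO_\cK\rtimes\C^\times_{rot}\times\cV^v\to\cT\times\cV^v$ with $\tT^\cO_\cK\rtimes\C^\times_{rot}\times\cV^v_T\to\cT_T\times\cV^v_T$ --- is Cartesian: a point of the fibre product produces an element $g$ agreeing with an element $t$ of the abelian side up to an element $g'$ of the stabilizer $L_v$, and only the containment $L_v\subset\tT_\cO\rtimes\C^\times_{rot}$ lets one conclude $g\in\tT^\cO_\cK\rtimes\C^\times_{rot}$ (the paper notes this genuinely fails without the hypothesis, e.g.\ for $N=0$, $v=0$). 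You assert that ``each relevant square behav[es] Cartesianly'' and then immediately flag this as the main obstacle, so the claim is not established.

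A second ingredient you name but do not supply is the compatibility of the two restriction-with-supports maps $p^*$ and $p_T^*$. This is not a formal consequence of base change for a proper map: one must unwind Definition \ref{def:rws}, writing $p^*$ as the composite $i^!A\to i^!p_*p^*A= p_*j^!p^*A\to p_*j^!B$, apply $(\iota_\cR\times\iota_{\cV^v})^!$ and use proper base change along the closed embeddings to identify the result with the $T$-version $i_T^!A_T\to p_{T*}j_T^!B_T$, and only then pass to equivariant hypercohomology, take $W$-invariants, and localize so that $\iota_*$ is invertible; properness of $m$ and of $\iota_\cP$ then closes the argument. Your sketch names the right tools and the right references, but until the Cartesian square and the $p^*$-compatibility are actually checked, part (3) remains unproved.
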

\begin{proof}
	The first assertion follows from the fact that the $L_v$-fixed points are contained in the $L_v$-fixed points on the affine Grassmannian, which we assume are also the $T$-fixed points. On the other hand, these fixed points for $T$ are topologically a discrete set of points coinciding with $\Gr_T$. 
	The second assertion is the GKM (or in some contexts, Atiyah-Bott) localization theorem. 
	
	Consider the following diagram:
	\begin{center}
\includegraphics{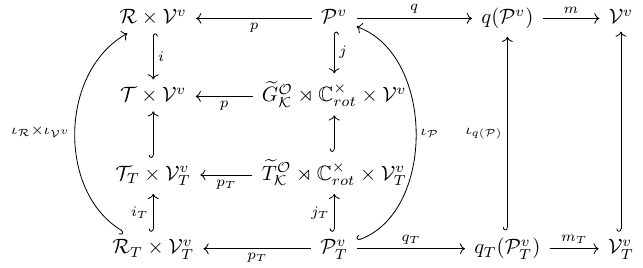}
	\end{center}
	Here $i, j, p, q, m$ are as before, and the versions with subscript $T$ are the corresponding maps for $T\subset G$. The inclusions $\iota_{?}$ come from the maps $T\hookrightarrow G, \Gr_T\hookrightarrow \Gr_G$ and variations. The space $\cP^v$ is defined as $\cP^v:=p^{-1}(\cR\times \cV^v)$ and $\cP^v_T$ by replacing $G$ with $T$.
	
	Note that the upper and lower squares on the left tower of squares are clearly Cartesian. We claim that the middle one is so too. By definition the fiber product 
	$$(\tG_{\cK}^\cO\rtimes\C^\times_{rot} \times \cV^v)\times_{\cT \times \cV^v}(\cT_T \times \cV^v_T)$$
	consists of $(g,s',[t,s])$ so that $[g,s']=[t,s]$ and $s=s'$. In particular, there is some $g'\in L_v$ such that $gg'=t$. But since $L_v$ is contained in $\tT_\cO\rtimes \C^\times_{rot}$, we must have $g\in \tT_\cK^\cO\rtimes \C^\times_{rot}$. So every square in the tower is Cartesian. Note that this is not true without our assumptions (take for example $N=0, v=0$).
	
	Let $\cF=\omega_{\cV^v}[-2\dim \tG_\cO]$ and
	$\cF_T=\omega_{\cV^v_T}[-2\dim \tT_\cO]$. 
	Let $\iota_{\cV^v}: \cV^v_T\hookrightarrow \cV^v$. Then $\iota_{\cV^v_T}^!\cF=\cF_T$ by functoriality of the upper shriek and the definition of the dualizing complex.

	Let then $$r\otimes \alpha \in H_*^{\tG_\cO\rtimes \C^\times_{rot}}(\cR)\otimes H^{L_v}_*(M_v)\cong
	H^{-*}_{\tG_\cO\rtimes\C^\times_{rot}\times\tG_\cO\rtimes\C^\times_{rot}}(\cR\times \cV^v,\omega_\cR\boxtimes \cF).$$ 
	By Lemma 5.10. of \cite{BFN}, the pushforward map 
	$$(\iota_\cR)^W_*: (\tilde{\cA}^{\hbar}_T)^W\to \tilde{\cA}^\hbar$$ given by taking the $W$-invariants of the $\tT_\cO\rtimes \C^\times_{rot}$-equivariant pushforward becomes an isomorphism after localizing at countably many characters of $\tT\times \C^\times_{rot}$. By parts (1) and (2), 
	$$(\iota_{\cV^v})_*: H_*^{\tT_\cO\rtimes \C^\times_{rot}}(\cV^v_T)\cong H^{L_v}_*(M^{L_v}_v)\to 
	H^{L_v}_*(M_v)\cong H_*^{\tG_\cO\rtimes \C^\times_{rot}}(\cV^v)$$ also becomes an isomorphism after localizing at countably many characters of $L_v$.
	
	If we define moreover $$\iota_*:=(\iota_\cR)^W_*\otimes (\iota_{\cV^v})_*$$ and work in this localization,
	the intertwining property we need to show becomes
	$$\iota_*(m_T\circ q_T)_* p_T^*((\iota_*)^{-1}(r \otimes\alpha))=
	(m\circ q)_*p^*(r\otimes \alpha).$$
	Define $$A:=\omega_{\cT}[-2\dim N_\cO]\boxtimes \cF, A_T:=\omega_{\cT_T}[-2\dim N_\cO]\boxtimes \cF_T$$
	and $$B:=\omega_{\tG_{\cK}^\cO\rtimes \C^\times_{rot}}[-2\dim\tG_\cO\rtimes\C^\times_{rot}]\boxtimes \cF, B_T:=\omega_{\tT_{\cK}^\cO\rtimes \C^\times_{rot}}[-2\dim\tT_\cO\rtimes\C^\times_{rot}]\boxtimes \cF_T$$
	The restriction with support map $p^*$ from Theorem \ref{thm:convolution} and Definition \ref{def:rws} is (the induced map in hypercohomology of) the composition
	$$i^!A\to i^!p_*p^*A=p_*j^!p^*A\to p_*j^!B.$$
	Similarly we have 
	$$i^!_TA_T= (\iota_\cR\times \iota_{\cV^v})^!i^! A\to i^!_Tp_{T*}p_T^*A_T\to p_{T*}j_T^!B_T$$
	Using proper base change, we rewrite this as 
	$$(\iota_\cR\times \iota_{\cV^v})^!i^!A\to i^!_Tp_{T*}p_T^*A_T=(\iota_\cR\times \iota_{\cV^v})^!i^!p_*p^*A\to p_{T*}j_T^!B_T=(\iota_\cR\times \iota_{\cV^v})^!p_*j^!B.$$
	Passing to $\tT_\cO\rtimes \C^\times$-equivariant hypercohomology, we get that the square
	\begin{center}
\includegraphics{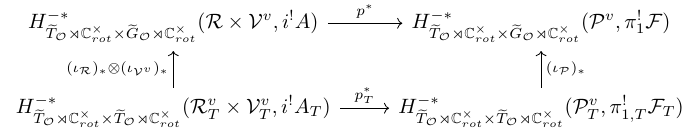}
	\end{center}
	commutes. Now taking $W$-invariants on the $\cR$-factor everywhere and passing to the localization where the left column becomes an isomorphism, we get
	$$p_T^*((\iota_*)^{-1}(r \otimes\alpha))=
	(\iota_{\cP*})^{-1}p^*(r \otimes \alpha).$$
	Since the right large square is also Cartesian and $\iota_\cP$ is a closed embedding, using proper base change  once more we get 
	$$\iota_*(m_T\circ q_T)_*(\iota_{\cP})^{-1}_*p^*(r \otimes \alpha)=(m\circ q)_*p^*(r \otimes \alpha).$$
\end{proof}
\begin{remark}
	Parts (1) and (2) of the above Proposition were also obtained in \cite[Theorem 5.13]{HKW}.
\end{remark}
\begin{remark}
	While it is natural to anticipate similar localization results for the parahoric cases, we do not know how these work due to a lack of an obvious replacement for the map $\Gr_T\hookrightarrow \Gr_G$ respecting the convolution structure in the case of other partial affine flag varieties.
\end{remark}

\subsubsection{Computations in the spherical case}

Let $\Gr_{G}^\lambda$ be the $G_\cO$-orbit of $t^\lambda \in \Gr_{G}$ and set $\cR^{\leq\lambda} = \cR \cap \pi^{-1}(\overline{\Gr_G^{\lambda}})$, where $\pi:\cR \to \Gr_{G}$ is the projection forgetting $N_\cO$. In what follows we will determine the action of various classes in $\tilde{\cA}^{\hbar}$ by means of two-fold fixed-point localization. Recall that there are commutative subalgebras $H^*_{\tG\times \C^\times_{rot}}(pt) \hookrightarrow \tilde{\cA}^\hbar$ and $H^*_{\tT \times \C^\times_{rot}}(pt) \hookrightarrow \tilde{\cA}_T^\hbar$. Denote the equivariant parameters for the torus $\tT \times \C^\times_{rot}$ collectively by $\varphi$ (for $T$), $m$ (for $G_F$) and $\hbar$ (for $\C^\times_{rot}$). 

Let $[t^\lambda]$ denote the fundamental class of $\cR_T \cap p_T^{-1}(\overline{\Gr_T^{\lambda}}),$ often called an ``abelianized monopole" \cite{BDG,BDGH}.
For $\lambda$ dominant with $\Gr_{G}^\lambda$ closed (minuscule) we can then write the following localization formula, c.f. \cite[Proposition 6.6.]{BFN}: \begin{equation}
\label{eq:abelianmonompole}
[\cR^{\leq \lambda}] = \iota_*\bigg(\sum\limits_{w \in W/W_\lambda}\frac{[t^{w.\lambda}]}{e(T_{w.\lambda} \Gr_G^{\leq \lambda})}\bigg),\end{equation} where $W_\lambda$ is the stabilizer of $\lambda$ in the Weyl group $W$.%
\footnote{Since the $\varphi$ do not commute with $[t^\lambda]$, we take the convention that the denominator is to the right of the numerator in writing this formula.}
The unit of the algebra $\tilde{\cA}^\hbar$ is $1:=[\cR^{\leq 0}]$. Other generators of $\tilde{\cA}^\hbar$ can be constructed by including a $W_\lambda$-invariant function $f(\varphi, m, \hbar)$ to the numerator of this expression: 
\begin{equation}
\label{eq:dressedmonopole}
[\cR^{\leq \lambda}][f] = \iota_*\bigg(\sum\limits_{w \in W/W_\lambda}\frac{(w.f) [t^{w.\lambda}]}{e(T_{w.\lambda} \Gr_G^{\leq \lambda})}\bigg)\end{equation}
These are called ``dressed" monopole operators, which are known to generate $\tilde{\cA}^\hbar$ in the cases we are interested in; see \emph{e.g.} \cite[Theorem A.7]{BFNSlices} and more generally \cite[Theorem 3.7]{WeekesGens}.

\begin{remark}
	More precisely, Theorem 3.7 of \cite{WeekesGens} says that the $[\cR_{\leq \lambda}][f]$ with {\em minuscule} $\lambda$ and a slightly smaller collection of $f$'s generate $\tilde{\cA}_{G,N}^\hbar$ for any quiver gauge theory; the quiver in the present case is a Jordan quiver with a framing node of rank 1 and was also described in Appendix A(iii) of \cite{BFNSlices}.
\end{remark}
\begin{remark}
	The terminology ``dressed monopole" has its origins in the physics literature, in our context they appear for example  in \cite{CHZ}. These operators also appear as the dimensional reduction of the four-dimensional mixed Wilson-'t Hooft operators of \cite{Kap}.
\end{remark}
Assume the hypothesis of Proposition \ref{prop:torusloc} and, moreover, that the map $L_v \to G_F \times \C^\times_{rot}$ is injective. Thus, the action of $H^*_{\tilde{T}\times \C^\times}(pt)$ factors through the action of $H^*_{L_v}(pt)$ \cite{HKW}.

We saw above that, if the induced action of the stabilizer subgroup $L_v$ on $\Gr_G$ has isolated fixed points and is contained in $\tT_{\cO} \rtimes \C^\times_{rot}$, the fixed points of the $L_v$ action on $M_v$ can be identified with elements of $\Gr_T \subset \Gr_G$. Of course, if $[p] \in M_v$ is a fixed point of the $L_v$ action and is represented by $p \in T_\cK$, $p$ is only fixed by $L_v$ up to the action of $T_\cO$, \emph{i.e.} we need to compensate the action of $L_v$ on $T_\cK$ with the action of $T_\cO$. The requirement that $L_v \to G_F \times \C^\times_{rot}$ is injective implies that there is a unique such compensating transformation, hence the action of $H^*_T(pt) \subset H^*_{\tT \times \C^\times_{rot}}(pt)$ on the fixed point class $\ket{p}$ is uniquely determined by the action of $H^*_{T_F \times \C^\times}(pt)$ on $\ket{p}$. We write $\varphi \ket{p} = \varphi(p) \ket{p}$. The action of $H^*_{T_F \times \C^\times}(pt)$ is then determined by the injection $L_v \to G_F \times \C^\times_{rot}$, which imposes $\textrm{rank } G_F + 1 - \textrm{rank } L_v$ linear relations on the $m \ket{p}, \hbar \ket{p}$. This is the source of the specialization discussed earlier.
\begin{remark}
	The bra-ket notation used to denote the fixed point classes $\ket{p}$ is used due to the realization of these classes as vectors in the supersymmetric Hilbert space in the gauge theory setup. It is worth noting that this isn't an honest Hilbert space, as the space of states in the twisted theory need not have a positive-definite inner product. Nonetheless, there is a natural symmetric, non-degenerate pairing of classes, c.f. \cite[Section 3.3]{BDGHK}.
\end{remark}
\begin{lemma}
	\label{lemma:monopoleAction}
	Assume that $M_v$ has isolated fixed points under the action of $L_v \subset \tT \times \C^\times_{rot}$ and that the map $L_v \to G_F \times \C^\times_{rot}$ is injective.
	
	For $\lambda$ a minuscule cocharacter and $f(\varphi, m, \hbar)$ a $W_\lambda$-invariant function we have
	\begin{equation}\label{eq:monopoleAction}
	[\cR_{\leq \lambda}][f] \ket{p} = \sum\limits_{w \in W/W_\lambda}\frac{\big(w.f(\varphi(t^{w.\lambda} p),m,\hbar)\big)  e(E_{p,w.\lambda})}{e(T_{w.\lambda} \Gr_G^{\leq\lambda})}\ket{t^{w.\lambda} p},
	\end{equation}
	where $E_{p,\nu}$ is an excess intersection factor. The denominator in this formula should be understood as replacing $\varphi$ in the polynomials $e(T_{w.\lambda} \Gr_G^{\leq \lambda})$ with $\varphi(p)$.
\end{lemma}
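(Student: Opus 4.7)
The plan is to combine the abelianization of Proposition \ref{prop:torusloc} with a direct fixed-point computation. Under the standing hypotheses on $L_v$, parts (1)--(3) of that proposition identify $H^{L_v}_*(M_v)$ with $H_*^{\tT_\cO\rtimes \C^\times_{rot}}(\cV^v_T)$ after inverting countably many characters, compatibly with the convolution actions of $\tilde{\cA}^\hbar$ and $(\tilde{\cA}_T^\hbar)^W$. Substituting \eqref{eq:dressedmonopole} into the action of $[\cR_{\leq\lambda}][f]$ then reduces the problem to computing, for each $w\in W/W_\lambda$, the action on $\ket{p}$ of the abelianized dressed monopole $(w.f)[t^{w.\lambda}]$, divided by $e(T_{w.\lambda}\Gr_G^{\leq \lambda})$ evaluated at $\varphi \mapsto \varphi(p)$ as in \eqref{eq:abelianmonompole}.

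For a single cocharacter $\mu = w.\lambda$, since $\Gr_T$ is discrete the relevant component of the abelianized raviolo $\cR^v_T$ is the quotient of $\{(s_2,t^\mu,s_1) : s_1,s_2 \in \cV^v_T,\; t^\mu.s_1 = s_2\}$ by $\tT_\cO\rtimes\C^\times_{rot}$, together with its equivariant projections $\varphi_1$ to the source and $\varphi_2$ to the target in $\cV^v_T$. The convolution action of Theorem \ref{thm:convolution}, specialized to $T$ and interpreted via the correspondence description of Section \ref{sec:corr}, sends $\ket{p} \mapsto \varphi_{2,*}\varphi_1^*\ket{p}$. I would apply the projection formula and $L_v$-fixed-point localization first to $\varphi_1$ (refined pullback to the fiber over $p$) and then to $\varphi_2$ (proper pushforward to the fiber over $t^\mu p$); since both fixed loci are isolated this produces a scalar multiple of $\ket{t^\mu p}$, whose numerator is the equivariant Euler class $e(E_{p,\mu})$ of the ``excess'' normal directions in the raviolo fiber not already accounted for by $T_{t^\mu p}\cV^v_T$ together with $T_\mu \Gr_G^{\leq\lambda}$, and whose denominator retains $e(T_\mu \Gr_G^{\leq\lambda})$ from the localization of $[\cR^{\leq\lambda}]$.

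The remaining ingredient is the dressing $w.f$, viewed as an element of $H^*_{\tT\times \C^\times_{rot}}(pt)$. Because the injectivity of $L_v \to G_F \times \C^\times_{rot}$ forces the compensating $\tT_\cO$-transformation at each fixed point to be unique, $H^*_{\tT \times \C^\times_{rot}}(pt)$ acts diagonally in the fixed-point basis via $\varphi \mapsto \varphi(q)$ for $\ket{q}$; thus the dressing is evaluated on the \emph{target} fixed point, giving the scalar $w.f(\varphi(t^{w.\lambda}p), m, \hbar)$. Summing over $w\in W/W_\lambda$ then yields \eqref{eq:monopoleAction}.

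I expect the main obstacle to be a precise description of the excess intersection factor $E_{p,w.\lambda}$: one must identify the equivariant normal bundle of the fiber $\varphi_1^{-1}(p)\cap \varphi_2^{-1}(t^{w.\lambda}p)$ inside $\cR^v_T$ relative to the tangent directions already captured by the projections to $M_v$ and to $\Gr_G^{\leq\lambda}$, and verify that the remaining weights of $\tT\times \C^\times_{rot}$ combine into a well-defined Euler class. Minuscularity of $\lambda$ ensures that all strata $\Gr_G^{w.\lambda}$ have a common dimension and removes boundary pathologies inside $\overline{\Gr_G^\lambda}$, so this ultimately reduces to an explicit weight calculation with the $T$-representation $N$.
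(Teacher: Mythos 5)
Your proposal follows essentially the same route as the paper: reduce to the abelianized algebra via Proposition \ref{prop:torusloc}, expand $[\cR_{\leq\lambda}][f]$ using \eqref{eq:dressedmonopole}, and compute $[t^{w.\lambda}]\ket{p}$ as an excess-intersection Euler class times $\ket{t^{w.\lambda}p}$, with the dressing evaluated at the target fixed point and the denominator at the source. The one ingredient you defer --- identifying $E_{p,\lambda}$ --- is supplied in the paper as $E_{p,\lambda}\simeq N(\cO)/(N(\cO)\cap t^{-\lambda}N(\cO))$, viewed as a quotient of tangent spaces at $(t^\lambda p, t^\lambda, p)\in\cT_T$, whose equivariant Euler class is the explicit weight product $\prod_{\tilde\mu\,:\,\langle\mu,\lambda\rangle<0}[\langle\tilde\mu,\varphi(p)+m\rangle]^{-\langle\mu,\lambda\rangle}$.
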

\begin{proof}
	By the previous Proposition we only need to compute this inside $H^{L_v}_*(M_v^{L_v})\otimes \C(\fl)$.
	Let $\ket{p}$ be (the inclusion of) the fundamental class of a fixed point in $M_v \subset N(\cO)/G(\cO)$. The subalgebra $H^*_T(pt)=\C[\ft] \subset \tilde{\cA}_T^\hbar$ acts as $\varphi_a \ket{p} = \varphi_a(p)\ket{p}.$
	Since $\pi_T^{-1}(\Gr_T^{\lambda})$ is a vector bundle over a point, using the excess intersection formula for the refined pullback $p^*$ (see Fulton \cite[Section 6.3]{Fulton}) we have  $$[t^\lambda] \ket{p} =
	(m\circ q)_* p^* ([t^\lambda]\otimes [p])=
	e(E_{p,\lambda}\big)\ket{t^\lambda p}.$$ As a vector space over $\C$, $E_{p,\lambda}$ can be expressed as
	$$E_{p,\lambda} \simeq N(\cO)/ (N(\cO)\cap t^{-\lambda}N(\cO)).$$ The equivariant structure of this vector space is determined by $\lambda$ and $p$; $E_{p, \lambda}$ should be thought of as a quotient of tangent spaces at $(t^\lambda p, t^\lambda, p) \in \cT_T$. A straightforward computation shows that $$e(E_{p,\lambda}\big) = \prod\limits_{\tilde{\mu} \textrm{ s.t. } \langle \mu, \lambda\rangle < 0}[\langle\tilde{\mu},\varphi(p)+m\rangle]^{\langle \mu, \lambda\rangle},$$ where the product runs over the $\tG$ weights $\tilde{\mu}$ of $N$, with $\mu$ its restriction to $G$. We also use the notation that
	$$[x]^r = \begin{cases}
	\prod\limits_{j=0}^{r-1} (x+j\hbar) & r>0\\
	1 & r=0\\
	\prod\limits_{j=1}^{|r|} (x-j \hbar) & r < 0\\
	\end{cases}.$$ It is worth noting that if $t^\lambda$ maps $p$ outside $N(\cO)$ then $E_{p,\lambda}$ will necessarily have a vector that transforms trivially under $\C^\times$, i.e. $e(E_{p,\lambda})=0$. 
	By Eq. \eqref{eq:dressedmonopole} the result follows.
\end{proof}
\begin{remark}
	The above localization computations and the ``abelianization procedure" appear in \cite{BFNSlices} as an embedding of the algebra $\tilde{\cA}^\hbar$ to an algebra of differential(-difference) operators on the maximal torus $T\subset G$.
\end{remark}

\section{Torus Links and the Spherical RCA}
\label{sec:toruslinks}
Fix $G = GL_n, N = \Ad\oplus V, G_F = \C^\times_{dil}$ and set $\cR = \cR_{G, N}, \cA^\hbar = \cA_{G,N}^\hbar$.  We focus on the case of $v\in N(\cO)$ corresponding to positive $(n,k)$ torus knots, which can be realized by the plane curve singularities $\hC_{(n,k)}$ associated to $f = x^n - t^k$. Based on the relation between Hilbert schemes of points on $\hC_{n,k}$ and GASF in Theorem \ref{thm:maincomparison}, we see that $$\cM_{(n,k)}:=\Hilb^\bullet(\hC_{(n,k)}) = M_v$$ for, \emph{e.g.}, $v = (\gamma, e_n) $ with $$\gamma=\begin{pmatrix}
0 & 1 & \cdots & 0 & 0\\
\vdots & \vdots & \ddots & \ddots & \vdots\\
0 & 0 & \ddots & 1 & 0\\
0 & 0 & \cdots & 0 & 1\\
t^k & 0 & \cdots & 0 & 0\\
\end{pmatrix}.$$

In this section we speculate on the relation with a conjecture of Oblomkov-Rasmussen-Shende \cite{ORS} concerning the relation between the homology of the Hilbert schemes of points on plane curve singularities and minimal $a$-degree (unreduced) HOMFLY-PT homology $\overline{\mathscr{H}}_{\text{min}}$ of the associated link.
\begin{conjecture}[\cite{ORS}]
	\label{conj:ORS}
	Let $\hC$ be the germ of any plane curve singularity, then there is an isomorphism of graded vector spaces $$H_*(\Hilb^\bullet(\hC)) \cong \overline{\mathscr{H}}_{\text{min}}(\text{Link of } \hC).$$
\end{conjecture}
In this isomorphism, the $q$-grading on $\overline{\mathscr{H}}_{\text{min}}$ corresponds to the number of points on $\Hilb^\bullet(\hC)$ and the $t$-grading is homological on both sides. Oblomkov-Rasmussen-Shende further conjecture that the higher $a$-degrees can be obtained from looking at the ``incidence varieties" discussed in Remark \ref{rmk:incidence}. The work of Hogancamp-Mellit shows that this conjecture (together with its generalization to higher $a$-degree) is true for torus knots \cite{HM}. 

The subsequent work of Gorsky-Oblomkov-Rasmussen-Shende \cite{GORS} reframes the Oblomkov-Rasmussen-Shende conjecture  for the $(n,k)$ torus \emph{knot} $T_{(n,k)}$ in terms of the representation theory of the rational Cherednik algebra $\cH_n$ for $\fs\fl_n$, at parameter $k/n$. The algebra $\cH_n$ is identified with the subalgebra of $\overline{\cH}_n$ generated by the differences $x_i - x_j$, $y_i-y_j$, and the permutations $\sigma \in \fS_n$, and there is an isomporphism $\overline{\cH}_n \cong \cH_n \otimes_\C \mathcal{D}$, where $\mathcal{D}$ is the algebra of differential operators on $\C$ and is generated by $X = \sum_i x_i$ and $Y = \sum_i y_i$ \cite{GORS}. When $\gcd(n,k) = 1$, the algebra $\cH_n$ has a unique (non-trivial) finite-dimensional simple module denoted $L_{k/n}$ \cite{BEG}. Since $\overline{\cH}_n \cong \cH_n \otimes_\C \mathcal{D}$, it follows that it does not have any (non-trivial) finite dimensional simple modules but does have a natural class of simple modules of the form $M \otimes_\C \C[X]$ for $M$ a simple module of $\cH_n$; in particular, there is the simple module $\overline{L}_{k/n} \cong L_{k/n} \otimes_\C \C[X].$

The algebras $\overline{\cH}_n$ and $\cH_n$ are graded with $|x_i| = - |y_i| = 1$ and $|\sigma| = 0$, which is actually an inner grading with respect to $H = \tfrac{1}{2} \sum_i (x_i y_i + y_i x_i)$. Moreover, these algebras are filtered by total polynomial degree, \emph{i.e.}, such that the $i$-th filtered component is spanned by elements that can be written by products of at most $i$ $x$'s and $y$'s. Gorsky-Oblomkov-Rasmussen-Shende conjecture that the (reduced) HOMFLY-PT homology of $T_{(n,k)}$ can be obtained from the representation $L_{k/n}$; we only state the part relevant for minimal $a$-degree.

\begin{conjecture}[\cite{GORS}]
	\label{conj:GORS}
	The simple module $L_{k/n}$ admits a filtration $\cF_\bullet$ compatible with the filtration on the rational Cherednik algebra and the grading induced by $H$ such that $$\mathscr{H}_{\text{min}}(T_{(n,k)}) \cong \Hom_{\fS_n}(\C, \gr^\cF L_{k/n}) \cong \gr^\cF e L_{k/n}$$ as graded vector spaces. 
\end{conjecture}

This conjecture can be equivalently formulated in terms of the unreduced homology $\overline{\mathscr{H}}_{\text{min}}$ after replacing $e L_{k/n}$ with $e \overline{L}_{k/n}$ \cite{GORS}.

The work of Oblomkov-Yun shows that the action of $\cH_n$ on $L_{k/n}$ can be realized geometrically via affine Springer theory \cite{OY}. They consider the equivariant (co)homology of the parabolic affine Springer fiber over an $\mathfrak{sl}_n(\cO)$ matrix with characteristic polynomial $x^n - t^k$, and identify the module $L_{k/n}$ with the associated graded with respect to the perverse filtration \cite{YunThesis} on the Springer fiber after specializing the equivariant parameter to $1$. The perverse filtration itself yields a filtration $\cF^{\text{geom}}$ on the module $L_{k/n}$ that is compatible in the sense of the above conjecture with the filtration on $\cH_n$ (we do not discuss the filtrations $\cF^{ind}$ and $\cF^{alg}$ in \cite{GORS} as they do not appear in our setup any more naturally as they do in \cite{OY}). Thus, the conjecture of Gorsky-Oblomkov-Rasmussen-Shende with $\cF = \cF^{\text{geom}}$ can be seen as a consequence of the Oblomkov-Rasmussen-Shende.

All of these ingredients appeared in the previous sections: we realized $\Hilb^\bullet(\hC_{(n,k)})$ and its parabolic generalization $\PHilb^\bullet(\hC_{(n,k)})$ as generalized affine Springer fibers in Section \ref{sec:hilb} and in Section \ref{sec:sca} we constructed actions of the spherical rational Cherednik algebra $e \overline{\cH}_n e$ and full rational Cherednik algebra $\overline{\cH}_n$ on their equivariant homologies, respectively. Moreover, the perverse filtration described above comes from the grading by number of points on $\Hilb^\bullet(\hC_{(n,k)})$ and $\PHilb^\bullet(\hC_{(n,k)})$ via an Abel-Jacobi map \cite{MS,MY}, in particular the filtration is split in the Hilbert scheme case as follows e.g. from \cite{Ren}. 

By the above discussion, it is natural to expect that the equivariant homologies of $\Hilb^\bullet(\hC_{(n,k)})$ and $\PHilb^\bullet(\hC_{(n,k)})$ realize the modules $e \overline{L}_{k/n}$ and $\overline{L}_{k/n}$, respectively; we will see below that this is exactly the case.

\begin{remark}
It is still unclear whether the rational Cherednik algebra acts naturally on the triply graded homologies of algebraic links. Of course, assuming the ORS conjecture's validity (which we have in the toric cases), one has such an action {\em par transport de structure}. It would be interesting to know what this action means in terms of knot homology. For some speculations one can consult \cite{GORS}.
\end{remark}

\begin{remark}[For the physically minded reader]
As described in the Introduction, the physical setup of \cite{DGHOR} realizes each $a$-degree of the HOMFLY-PT homology of a link $L$ (identified with the closure of an $n$-strand braid) in two different ways (exchanged by 3d mirror symmetry) as a supersymmetric Hilbert space (subject to a boundary condition $\mathcal{B}_L$ that encodes the link and a line operator $\cL_\ell$ determining the $a$-degree) in the topological $A$- or $B$-twists the 3d $\cN = 4$ $U(n)$ gauge theory with hypermultiplets in $T^*N = T^*(\Ad \oplus V)$.

In either construction, there is a natural action on the supersymmetric Hilbert spaces for the minimal $a$-degree by the algebra of local operators, identified in this case with the algebra of functions on $\Hilb^n(\C^2)$. For general links, however, the boundary condition is not compatible with an Omega-background, \emph{i.e.}, it is not compatible with the quantization of $\Hilb^n(\C^2)$ by $e \overline{\cH}_n e$. Links associated to quasi-homogeneous singularities, including the torus links $T_{(n,k)}$, are exceptions to this rule.

For higher $a$-degrees, there similarly should be an action of the algebra of local operators bound to the line operator $\cL_\ell$ but this algebra is not well understood. One could instead consider the algebra of local operators bound to the line operator $\cL_{RCA}$ used by Braverman-Etingof-Finkelberg \cite{BEF}. This algebra of local operators realizes the endomorphisms of the Procesi bundle $P \to \Hilb^n(\C^2)$ and its quantization is the full rational Cherednik algebra $\overline{\cH}_n$. For the same reason as above, there should be an action of these bundle endomorphisms on the corresponding supersymmetric Hilbert space for any link but \emph{not} necessarily its quantization. Nonetheless, it is natural to expect that the action of local operators bound to the line operator $\cL_\ell$ on the $\ell$-th $a$-degree of HOMFLY-PT homology of a link $L$ to arise as a partial symmetrization of the the action of $\cL_{RCA}$ on the supersymmetric Hilbert space in the presence of the boundary condition $\mathcal{B}_L$.
\end{remark}

\subsection{Computation of the convolution action}
The assumptions of Lemma \ref{lemma:monopoleAction} hold for $(n,k)$ torus knots due to Lemma \ref{lemma:nkstab}. The $L_v$-fixed points are labeled by cocharacters as described in Proposition \ref{prop:fixedpoints}. Moreover, the results of \cite{GKM} apply to show the Hilbert schemes are equivariantly formal (see in particular \cite[Corollary 7.25]{GSV}) so we are in a setup to apply Proposition \ref{prop:torusloc}. We will label the fixed point classes inside $\C[\hbar^{\pm1}]H^{\C^\times}_*(\cM_{(n,k)})$ by $\ket{A}.$ The map $L_v \to G_F \times \C^\times_{rot} \cong \C^\times_{dil} \times \C^\times_{rot}$ is realized by $\nu \mapsto (\nu^{-k}, \nu^n)$. This implies the relation $(n m + k \hbar) \ket{A} = 0$ for all $A$. We explicitly solve this by replacing $m \ket{A} = -\tfrac{k}{n}\hbar \ket{A}$. Let $\varphi_a$, $a = 1, ..., n$ be the components of $\varphi$ in the standard basis.
\begin{lemma}
	The action of $\C[\ft]$ is given by $$\varphi_a\ket{A} = \bigg((n - a)\tfrac{k}{n}-A_a\bigg)\hbar \ket{A}$$ and the action of $[t^\lambda]$ is given by $$[t^\lambda]\ket{A} = \bigg(\prod_{\lambda_a < 0} \prod\limits_{\alpha = 0}^{|\lambda_a|-1}((n-a)\frac{k}{n} -A_a +\alpha)\hbar\bigg)\bigg(\prod\limits_{\lambda_a > \lambda_b}\prod\limits_{\beta=0}^{\lambda_a-\lambda_b-1}((b-a+1)\frac{k}{n} -A_a+A_b +\beta)\hbar\bigg)\ket{A + \lambda}.$$
\end{lemma}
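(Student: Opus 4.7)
The plan is to reduce everything to the abelianized setting and then apply the localization machinery developed in Section \ref{sec:fixedpts}. By Lemma \ref{lemma:nkstab} the stabilizer $L_v \cong \C^\times$ is contained in $\tT_\cO \rtimes \C^\times_{rot}$, and by Proposition \ref{prop:fixedpoints} the action on $\cM_{(n,k)}$ has isolated fixed points (since $\gcd(n,k)=1$ for a torus knot). These are precisely the hypotheses required by Proposition \ref{prop:torusloc} and Lemma \ref{lemma:monopoleAction}, so the action of $\tilde{\cA}^\hbar$ on $H^{\C^\times}_*(\cM_{(n,k)})$ in the fixed-point basis is computed from the abelianized monopole formula upon substituting the eigenvalues of $\varphi_a$ on $\ket{A}$ and the specialization relations coming from $L_v \hookrightarrow G_F\times\C^\times_{rot}$.

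First I would pin down the equivariant parameters. From Lemma \ref{lemma:nkstab} the map $L_v\to G_F\times\C^\times_{rot}$ is $\nu\mapsto(\nu^{-k},\nu^n)$, giving the relation $n m + k\hbar = 0$ on every fixed-point class; this is the substitution $m = -\tfrac{k}{n}\hbar$ already indicated in the text. For the eigenvalue of $\varphi_a$ on $\ket{A}$, I would take the representative $t^{-A}$ in $T_\cK$ from Proposition \ref{prop:fixedpoints} and read off the unique compensating $T\subset T_\cO$ transformation required to make it $L_v$-fixed: acting by $\nu$ rescales $t \mapsto \nu^n t$ and multiplies on the left by $\diag(1,\nu^k,\ldots,\nu^{(n-1)k})$, so the $a$-th entry $t^{-A_a}$ acquires a factor $\nu^{(a-1)k-nA_a}$. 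The compensating element in $T$ has $a$-th component $\nu^{nA_a-(a-1)k}$, so $\varphi_a$ acts as $\big((a-1)\tfrac{k}{n} - A_a\big)\hbar$ after accounting for the fact that $\hbar$ corresponds to weight $n$ of $\nu$. This gives the first formula; the sign convention is fixed by compatibility with the shift $\ket{A}\mapsto\ket{A+\lambda}$.

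Next I would compute $[t^\lambda]\ket{A}$ directly from Lemma \ref{lemma:monopoleAction}. Since $[t^\lambda]$ is an abelianized class (not yet summed over the Weyl group), the factor $e(T_{w.\lambda}\Gr_G^{\leq\lambda})$ does not appear here — it enters only when assembling the $W$-invariant combination of Eq. \eqref{eq:abelianmonompole}. The excess intersection factor $e(E_{A,\lambda})$ then splits according to the $T$-weight decomposition $N = \Ad \oplus V$. For $V$ the weights are the standard basis vectors $e_a$ with trivial flavor weight, so the contributing factors are those with $\langle e_a,\lambda\rangle = \lambda_a < 0$; plugging in $\varphi_a(A) = \big((a-1)\tfrac{k}{n}-A_a\big)\hbar$ and reading off $[\varphi_a(A)]^{-\lambda_a}$ produces exactly the first product in the lemma. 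For $\Ad$ the weights are $e_a-e_b$ with flavor weight $1$, contributing whenever $\lambda_a - \lambda_b < 0$; substituting $\varphi_a(A)-\varphi_b(A)+m$, using the specialization $m = -\tfrac{k}{n}\hbar$, and reindexing the pair $(a,b)\leftrightarrow(b,a)$ so as to match the convention $\lambda_a > \lambda_b$ in the statement produces the second product.

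The only real work is bookkeeping. The main obstacle will be reconciling the three sign conventions in play: the parametrization $h = t^{-A}$ of Proposition \ref{prop:fixedpoints}, the direction of the shift $\ket{A}\mapsto\ket{A+\lambda}$ built into the statement, and the orientation of the pairing $\langle\mu,\lambda\rangle$ in the excess intersection formula. Once these are aligned consistently, each product in the stated formula appears by direct substitution into Lemma \ref{lemma:monopoleAction}, and no further geometric input is required beyond the weight decomposition of $\Ad\oplus V$.
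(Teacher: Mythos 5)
Your proposal is correct and follows exactly the paper's route: the paper's entire proof is the sentence ``This is a direct application of Lemma \ref{lemma:monopoleAction},'' and you carry out precisely that application — verifying the hypotheses via Lemma \ref{lemma:nkstab} and Proposition \ref{prop:fixedpoints}, extracting $\varphi_a(A)$ from the compensating torus transformation, and reading off $e(E_{A,\lambda})$ from the weight decomposition of $\Ad\oplus V$. The only caveat is the one you already flag: the overall sign of the abelianized class $[t^\lambda]$ (as opposed to the ratio appearing in $[\cR_{\leq\lambda}]$) is a normalization convention, and one can check against the explicit $(2,2\ell+1)$ computations in \ref{appendixB} that the stated formula is the consistent choice.
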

\begin{proof}
	This is a direct application of Lemma \ref{lemma:monopoleAction}.
\end{proof}
Using these ingredients and equation \eqref{eq:monopoleAction} one can obtain an expression for the action of any $[\cR_{\leq\lambda}][f]$.	
Therefore, for $\lambda$ a minuscule cocharacter we have
\begin{equation}\label{eq:SCAaction}
[\cR_{\leq \lambda}][1] \ket{A} = \sum\limits_{\lambda' \in W \cdot \lambda} \frac{\bigg(\prod\limits_{\lambda_a' < 0} \prod\limits_{\alpha = 1}^{|\lambda_a'|}(\varphi_a - \alpha \hbar)\bigg)\bigg(\prod\limits_{\lambda_a' > \lambda_b'}\prod\limits_{\beta=1}^{\lambda_a'-\lambda_b'}(\varphi_b - \varphi_a + m-\beta \hbar)\bigg)}{\bigg(\prod\limits_{\lambda_a' > \lambda_b'}\prod\limits_{\gamma=1}^{\lambda_a'-\lambda_b'}(\varphi_b - \varphi_a-\gamma \hbar)\bigg)}\ket{A + \lambda'}.
\end{equation} 
There is a similar expression for the action of $[\cR_{\leq\lambda}][f]$ for $f(\varphi, m, \hbar)$ a $W_\lambda$-invariant function:
\begin{equation}\label{eq:SCAaction2}
[\cR_{\leq \lambda}][f] \ket{A} = \sum\limits_{\lambda' \in W \cdot \lambda} \frac{(w.f)(\varphi_a, m , \hbar)\bigg(\prod\limits_{\lambda_a' < 0} \prod\limits_{\alpha = 1}^{|\lambda_a'|}(\varphi_a - \alpha \hbar)\bigg)\bigg(\prod\limits_{\lambda_a' > \lambda_b'}\prod\limits_{\beta=1}^{\lambda_a'-\lambda_b'}(\varphi_b - \varphi_a + m-\beta \hbar)\bigg)}{\bigg(\prod\limits_{\lambda_a' > \lambda_b'}\prod\limits_{\gamma=1}^{\lambda_a'-\lambda_b'}(\varphi_b - \varphi_a-\gamma \hbar)\bigg)}\ket{A + \lambda'},
\end{equation}
where $w \in W$ is such that $w.\lambda = \lambda'$; since $f$ is assumed to be invariant under $W_\lambda$, the above expression is independent of the choice of such a $w$.

\begin{proposition}
	Comparing to \cite[A(iii)]{BFNSlices}, we have an identification (up to numerical factors) $E_r[f] = [\cR_{\leq\lambda_r}][f]$ and $F_r[f] = [\cR_{\leq-\lambda_r}][\tilde f]$	 where $\lambda_r = (1,1,...,1,0,0...,0)$ with $r$ 1's and $\tilde f(\varphi) = f(\varphi-\hbar)$.
\end{proposition}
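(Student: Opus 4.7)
The plan is to match the two sides of each identification by comparing their actions on the abelianized side of the localization square. By Theorem \ref{thm:rca} the quantized Coulomb branch $\tilde{\cA}^\hbar_{GL_n,\Ad\oplus V}$ is isomorphic to $e\cH_n e$, and after the abelianization embedding of Proposition \ref{prop:torusloc} (or equivalently \cite[\S A]{BFNSlices}) both algebras embed as subalgebras of an algebra of difference operators on $\ft$. Since the minuscule dressed monopoles $[\cR_{\leq\lambda}][f]$ generate $\tilde{\cA}^\hbar$ by Weekes' result \cite{WeekesGens}, and the $E_r[f], F_r[f]$ generate $e\cH_n e$ by \cite[A(iii)]{BFNSlices}, it suffices to check that the abelianized expressions agree up to an overall numerical factor.

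First I would compute the abelianized expression for $[\cR_{\leq \lambda_r}][f]$ using Eq.~\eqref{eq:dressedmonopole} and Eq.~\eqref{eq:SCAaction}. For $\lambda_r=(1^r,0^{n-r})$ the Weyl orbit is parametrized by $r$-subsets $I\subset\{1,\dots,n\}$, and the tangent space $T_{w.\lambda_r}\Gr_{GL_n}^{\leq\lambda_r}$ contributes a denominator $\prod_{a\in I,b\notin I}(\varphi_a-\varphi_b)$ (up to $\hbar$-shifts that vanish for minuscule $\lambda$), while the excess-intersection factor $e(E_{p,\lambda_r})$ from $\Ad\oplus V$ produces only the numerator contributions from weights $\tilde\mu$ of $\Ad$ satisfying $\langle\mu,w.\lambda_r\rangle<0$ (there are no contributions from $V$ since all of its weights have $\langle\mu,\lambda_r\rangle\geq 0$). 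Matching this with the formula in \cite[A(iii)]{BFNSlices} for the abelianized $E_r[f]$ gives the first identification.

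Next I would treat the case $\lambda=-\lambda_r$ analogously. The key difference is that now the fundamental weights of $V$ contribute a nontrivial factor to $e(E_{p,-\lambda_r})$: for each $a\in I$ the weight $e_a^*$ of $V^*$ (equivalently, the weight of $V$ restricted to $T$) produces a factor $(\varphi_a-\hbar)$, which is precisely the origin of the shift $\tilde f(\varphi)=f(\varphi-\hbar)$ appearing in the statement. Together with the appropriate numerator from $\Ad$, this matches the abelianized formula for $F_r[f]$ up to a numerical factor.

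The main obstacle is bookkeeping the shifts and signs that enter when passing between the convention $[x]^r=\prod_{j=0}^{r-1}(x+j\hbar)$ of Lemma \ref{lemma:monopoleAction} and the conventions in \cite{BFNSlices}: one must carefully track the contribution of the $V$-part of $N$ for negative cocharacters (which produces the shift $f\mapsto\tilde f$), and verify that the dressing polynomials $f(\varphi,m,\hbar)$ transform correctly under the Weyl group orbit. Once these conventions are aligned, the identification follows from the fact that two algebra homomorphisms from $e\cH_n e$ to the algebra of difference operators on $\ft$ which agree on a generating set must coincide.
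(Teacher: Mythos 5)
The paper does not actually print a proof of this proposition: it is asserted immediately after \eqref{eq:SCAaction} as a direct comparison of the localization formula \eqref{eq:dressedmonopole} with the explicit difference-operator presentation of $E_r[f]$, $F_r[f]$ in \cite[A(iii)]{BFNSlices}, so your sketch is essentially the intended argument, spelled out in more detail than the paper gives. Two points in your write-up need correction, though neither derails the verification. First, the $\hbar$-shifts in the denominator do \emph{not} vanish for minuscule $\lambda$: in \eqref{eq:SCAaction} the denominator for $\lambda_r$ is $\prod_{a\in I,\,b\notin I}(\varphi_b-\varphi_a-\hbar)$, the index $\gamma$ simply running over the single value $1$; it is not $\prod(\varphi_a-\varphi_b)$ on the nose. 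Second, and more substantively, attributing the shift $\tilde f(\varphi)=f(\varphi-\hbar)$ to the $V$-part of the excess intersection factor is off: the weights of $V$ with $\langle\mu,-\lambda_r\rangle<0$ contribute the \emph{multiplicative} factor $\prod_{a\in I}(\varphi_a-\hbar)$ to $[\cR_{\leq-\lambda_r}]$, and that factor appears identically in the formula for $F_r[f]$ in \cite{BFNSlices}, so it cannot account for a reparametrization of the dressing polynomial. The natural source of the shift is the evaluation convention for dressed monopoles: by Lemma \ref{lemma:monopoleAction} the dressing factor is evaluated at the shifted fixed point, $f(\varphi(t^{w.\lambda}p))$, and since $\varphi_a(t^{-\lambda_r}p)=\varphi_a(p)+\hbar$ for $a\in I$, matching a convention that evaluates $f$ at $\varphi(p)$ forces the substitution $f\mapsto f(\varphi-\hbar)$ (this is also the content of the footnote to \eqref{eq:abelianmonompole} about ordering $\varphi$-dependent factors relative to $[t^\lambda]$). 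Once these conventions are aligned, the identification is a term-by-term match of the two explicit formulas, as you say; the closing appeal to two homomorphisms agreeing on generators is unnecessary, since the statement is just an equality of specific elements.
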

Using this presentation of the algebra, the following result is straightforward.
\begin{lemma}
	\label{lemma:module}
	For coprime $(n,k)$, $H^{\C^\times}_*(\cM_{(n,k)})$ is irreducible as a module for the spherical rational Cherednik algebra at parameter $m = -\tfrac{k}{n} \hbar.$
\end{lemma}
\begin{proof}
	We show that this module is irreducible by identifying the unique singular vector, namely $\ket{0}$. Recall that being a singular vector for the spherical rational Cherednik algebra corresponds to being in the kernel of all $F_r[f] = [\cR_{\leq-\lambda_r}][f]$. First consider the kernel of $F_n[f]$, or the classes corresponding to the cocharacter $\lambda = (-1,-1,...,-1)$. The choice of $f$ is that of a $W$ invariant polynomial $f(\varphi, m, \hbar)$. From the action given in \eqref{eq:SCAaction}, we find that
	$$F_n[1] \ket{A} = \prod \limits_{b = 1}^n \big( \varphi_b - \hbar\big) \hbar\ket{A - (1,1,..., 1)} = \prod \limits_{b = 1}^n \big((n-b)\frac{k}{n} - A_b\big) \hbar\ket{A - (1,1,..., 1)}.$$
	Since $\gcd(n,k)=1$, the factor $((n-b)\tfrac{k}{n} - A_b)$ can only vanish for $b = n$ and $A_n = 0$. It follows that the kernel of $F_n[1]$ is exactly those classes $\ket{A}$ with $A_n = 0$. The stabilizer of $\lambda = (-1,-1,...,-1)$ is all of $W = S_n$, so the operators $F_n[f]$ are labeled by choices of $W$-invariant polynomials $f(\varphi_a, m, \hbar)$. Using Eq. \eqref{eq:SCAaction2}, we find that $$F_n[f] \ket{A} = f F_n[1]\ket{A}$$ so $\ker F_n[1] \subset \ker F_n[f]$ for all $f$.	
	
	Now consider the action of $F_{n-1}[f]$ on sums of fixed point classes with $A_n = 0$. Using Eq. \eqref{eq:monopoleAction} for we have, after a dramatic simplification following from $A_n = 0$,
	$$F_{n-1}[1] \ket{A_1,...,A_{n-1}, 0} = \bigg(\prod\limits_{b=1}^{n-1}\big((n-1-b)\frac{k}{n} - A_b\big)\hbar\bigg) \ket{A_1 - 1,..., A_{n-2}-1,0}.$$
	Again, since $\gcd(n,k)=1$, the factor $((n-1-b)\frac{k}{n} - A_b)$ can only vanish for $b = n-1$ and $A_{n-1} = 0$. Therefore $\ket{A_1, ..., A_{n-1},0}$ is in the kernel of $F_{n-1}[1]$ if and only if $A_{n-1} = 0$. Thus $\ker F_n[1] \cap \ker F_{n-1}[1]$ only contains classes with $A_n = A_{n-1} = 0$. 
	
	The action of $F_{n-1}[f]$ on $\ket{A_1, ..., A_{n-1},0}$ enjoys an equally dramatic simplification and we find that $(\ker F_{n}[1] \cap \ker F_{n-1}[1]) \subset \ker F_{n-1}[f]$ for any choice of $f$. Continuing this process shows that $$\ker F_n[1] \cap \ker F_{n-1}[1] \cap ... \cap F_{1}[1] = {\rm span}\{\ket{0}\}$$ and that $\ket{0}$ belongs to the kernel of all $F_r[f]$.
\end{proof}
Now we state and prove the main theorem of this section.
\begin{theorem}
	\label{thm:glnirrep}
	For coprime $(n,k)$, $H_*^{\C^\times}(\cM_{(n,k)})$ can be identified with the irreducible 
	representation $e \overline{L}_{k/n}$ of the spherical rational Cherednik algebra of $\fg\fl_n$ at 
	parameter $m = -\tfrac{k}{n} \hbar.$ That is, setting the equivariant parameter $\hbar$ in $H_*^{\C^\times}(\cM_{(n,k)})$ to $-1$, the quotient algebra
	$e\cH_n e/(m-\tfrac{k}{n})$ acts on $H_*^{\C^\times}(\cM_{(n,k)})$.
\end{theorem}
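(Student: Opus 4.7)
The plan is to combine the irreducibility established in Proposition \ref{prop:module} with a count of $L_v$-fixed points and the classification of finite-dimensional modules for the spherical rational Cherednik algebra at the parameter $c = -k/n$.

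First, by Proposition \ref{prop:fixedpoints}, for $\gcd(n,k)=1$ the fixed-point set $\cM_{(n,k)}^{L_v}$ is finite, parametrized by cocharacters $A$ of $T \subset GL_n$ satisfying the three inequalities stated there. These cocharacters are in bijection with rational $(n,k)$-Dyck paths, so their count is the rational Catalan number $C_{n,k} = \tfrac{1}{n+k}\binom{n+k}{n}$. Equivariant localization (Proposition \ref{prop:torusloc}) then identifies $H_*^{\C^\times}(\cM_{(n,k)})$, after inverting $\hbar$, with a free module of rank $C_{n,k}$ over the localized equivariant cohomology of a point. Setting $\hbar=1$, equivalently imposing $m + \tfrac{k}{n}\hbar = 0$, thus yields a finite-dimensional $e\cH_n e / (m + \tfrac{k}{n}\hbar)$-module of dimension $C_{n,k}$.

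Second, the singular vector $\ket{0}$ from Proposition \ref{prop:module} carries weight $\varphi_a \ket{0} = (a-1)\tfrac{k}{n}\hbar \ket{0}$, which is precisely the weight of the spherical generator $e \otimes 1$ in the lowest-weight module $eL_{-k/n}(\mathrm{triv})$. By the Berest-Etingof-Ginzburg/Gordon classification of finite-dimensional modules for rational Cherednik algebras of $\Sn$, when $\gcd(n,k)=1$ the spherical rational Cherednik algebra $e\cH_n e$ at parameter $-k/n$ admits, up to isomorphism, a unique finite-dimensional irreducible representation, namely $eL_{-k/n}(\mathrm{triv})$, whose dimension is exactly $C_{n,k}$. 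Combining the irreducibility from Proposition \ref{prop:module}, the localization dimension count, and this classification uniquely identifies $H_*^{\C^\times}(\cM_{(n,k)})$ with $eL_{-k/n}(\mathrm{triv})$.

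The main obstacle is pinning down the $\Sn$-isotype of the singular vector $\ket{0}$, which \emph{a priori} could correspond to any irreducible of $\Sn$; since we work inside the spherical subalgebra throughout, however, this is automatically the trivial isotype, and the dimension match $C_{n,k}$ further forces the identification. For $(2,2\ell+1)$ torus knots the result can be verified directly, as carried out in \ref{appendixB}; for general coprime $(n,k)$ the direct fixed-point analysis becomes cumbersome, and one equivalently invokes the comparison between $\Hilb^\bullet(\hC_{n,k})$ and the corresponding compactified Jacobian, together with the affine Springer-theoretic computations of \cite{OY, Kiv1, Ren}, as indicated in the remark preceding the theorem.
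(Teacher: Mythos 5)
There is a genuine gap, and it sits at the heart of your argument: the $L_v$-fixed point set of $\cM_{(n,k)}=\Hilb^\bullet(\hC_{n,k})$ is \emph{not} finite. By Proposition \ref{prop:fixedpoints} the fixed points are cocharacters $A$ with $A_1\geq 0$, $A_1\leq A_2\leq\cdots\leq A_n$ and $A_n-A_1\leq k$; there is no upper bound on $A_1$ (e.g.\ $(c,c,\dots,c)$ qualifies for every $c\geq 0$), reflecting the fact that $\Hilb^\bullet$ has one projective component for each number of points. Consequently $H_*^{\C^\times}(\cM_{(n,k)})$ is infinite-dimensional, your claimed identification of it with a rank-$C_{n,k}$ free module fails, and ``setting $\hbar=1$'' cannot produce a finite-dimensional module. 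Relatedly, you invoke the classification of finite-dimensional irreducibles for the spherical Cherednik algebra of $\Sn$, but the module in the theorem is $eL_{-k/n}(\mathrm{triv})$ for $\fg\fl_n$, which is infinite-dimensional: it contains a polynomial algebra coming from the $\fg\fl_1$ center. The finite-dimensionality statement you want only holds after this center is split off, and your proposal never performs that step.

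The paper's proof supplies exactly the missing reduction. It first observes (from \cite{KN} or equation \eqref{eq:SCAaction}) that $X=[\cR_{\leq(1,0,\dots,0)}]$ and $Y=[\cR_{\leq(-1,0,\dots,0)}]$ satisfy $[X,Y]=n\hbar$, so by the irreducibility of Proposition \ref{prop:module} the module factors as $\C[X]\otimes M$ with $M$ an irreducible module for the spherical rational Cherednik algebra of $\fs\fl_n$. It then computes the graded Euler characteristic over the (infinite) fixed-point set, $\chi(\cM_{(n,k)})=\frac{1}{1-q^n}\left[\begin{smallmatrix}n-1+k\\ n-1\end{smallmatrix}\right]_q$, multiplies by $1-q$ to strip off the $\C[X]$ factor, and sets $q=1$ to find $\dim_\C M=\tfrac1n\binom{n+k-1}{n-1}$, which is finite (and equals your $C_{n,k}=\tfrac{1}{n+k}\binom{n+k}{n}$). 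Uniqueness of the finite-dimensional irreducible for the $\fs\fl_n$ spherical algebra at parameter $-k/n$ then completes the identification. Your overall skeleton (irreducibility plus a dimension count plus uniqueness of the finite-dimensional irreducible) is the right one, but you must route the dimension count through the quotient by the Weyl-algebra direction rather than through the raw fixed-point count, which diverges.
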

\begin{proof}
	From \cite{KN}, or a direct computation using \eqref{eq:SCAaction}, it follows that for all $n$ the operators $X = [\cR_{\leq (1,0,...,0)}] = E_1[1]$ and $Y = [\cR_{\leq (-1,0,...,0)}] = F_1[1]$ generate an appropriately scaled copy of the Heisenberg algebra: $[X, Y] = n \hbar.$ Since we have shown that $H_*^{\C^\times}(\cM_{(n,k)})$ is irreducible as a module for the spherical rational Cherednik algebra of $\fg\fl_n$ at parameter $m = -\tfrac{k}{n} \hbar$ it follows that it must decompose as a product $\C[X] \otimes M$, where $M$ is some irreducible module for the spherical rational Cherednik algebra of $\fs\fl_n$ \cite{GORS}. Finally, noting that the spherical rational Cherednik algebra of $\fs\fl_n$ at parameter $m = -\tfrac{k}{n} \hbar$ has a unique finite dimensional, irreducible module, it suffices to show that $\ker Y \simeq M$ is finite dimensional.
	
	The equivariant homology $H^{\C^\times}_*(\cM_{(n,k)})$ above is infinite dimensional. Nonetheless, it has finite dimensional graded components, where we grade by the number of points $d$ on the Hilbert scheme, or, equivalently, the degree in affine Grassmannian $\Gr_{G}$. Consider the corresponding graded Euler character:
	$$\chi_q(\cM_{(n,k)}) := \sum_{d \geq 0} q^d \chi(\cM^d_{(n,k)}),$$
	which can easily be computed from counting fixed points.
	
	Recall that the fixed points in $\cM_{(n,k)}$ are labeled by cocharacters $A$ as in Prop. \ref{prop:fixedpoints}, denote the set of such $A$ by $\mathfrak{A}_{(n,k)}$. The fixed point labeled by $A$ belongs to the component of the Hilbert scheme with $$d(A) = \sum \limits_{a = 1}^n A_a$$ and one finds
	$$\chi_q(\cM_{(n,k)}) = \sum\limits_{A \in \mathfrak{A}_{(n,k)}} q^{d(A)}.$$
	From Prop. \ref{prop:fixedpoints}, we have $$\mathfrak{A}_{(n,k)} = \bigsqcup_{i \geq 0} \mathfrak{A}^i_{(n,k)},$$
	where $$\mathfrak{A}^i_{(n,k)} = \{A = (A_1', ..., A_{n-1}',0)+ i(1,1,...,1) | 0 \leq A_{n-1}' \leq ... \leq A_1' \leq k\}.$$
	In particular, 
	$$\chi_q(\cM_{(n,k)}) = \frac{1}{1-q^n} \sum \limits_{A \in \mathfrak{A}^0_{(n,k)}} q^{d(A)}$$
	
	We can identify an element $A = (A_1', ..., A_{n-1}',0) \in \mathfrak{A}^0_{(n,k)}$ with a partition of $d(A)$ that fits into a rectangle of size $(n-1) \times k$. The partitions are exactly counted by the $q$-binomial coefficient
	$$\sum \limits_{A \in \mathfrak{A}^0_{(n,k)}} q^{d(A)} = \left[\begin{array}{c} n + k - 1\\ n-1\\ \end{array}\right]_q := \prod \limits_{j=0}^{k} \frac{1-q^{n-1+j-j}}{1-q^{j+1}},$$
	from which we conclude
	$$\chi_q(\cM_{(n,k)}) = \frac{1}{1-q^n}\left[\begin{array}{c} n + k - 1\\ n-1\\ \end{array}\right]_q.$$
	
	Noting that $X$ changes $q$-degree by 1, we can determine the dimension of $M$ by multiplying the above by $1-q$, the reciprocal of the graded dimension of $\C[X]$, and setting $q = 1$. One finds $$\dim_\C M = \frac{1}{n}\binom{n+k-1}{n-1} = \dim_\C H^*({\overline \cJ_{n,k}}),$$ where ${\overline \cJ_{n,k}}$ is the compactified Jacobian of the curve $\hC_{n,k}$. Indeed, this is the dimension of $L_{k/n}$ \cite{GORS}.
\end{proof}

\begin{remark}
	It is worth noting that $\tilde{\cA}^\hbar$ is bigraded by the degree in $\Gr_G$, called ``monopole number" in the physics literature, and by the action induced by scaling $\C[\ft,\hbar]^W$ with weight 2, called ``R-charge" in the physics literature. In particular, we assign the degree $(\pm r, r+2\deg f)$ to $[\cR_{\leq\pm\lambda_r}][f]$. The spherical rational Cherednik algebra of $\fg\fl_n$ is also bigraded by the difference in degree of $x$'s and $y$'s and the total polynomial degree. That the respective filtrations agree follows from \cite{KN}. Note that after specializing $\hbar$ the latter of these gradings becomes merely a filtration.
\end{remark}

\subsection{Comparison to results of Gorsky-Simental-Vazirani}
\label{sec:gsv}
In the recent preprint \cite{GSV}, when $\hC=\{x^n=t^k\},\, \gcd(n,k)=1$, another action of the rational Cherednik algebra of $\fg\fl_n$ (see Definition \ref{def:cherednikalg})
is defined on the localized equivariant (Borel-Moore) homology of the parabolic flag Hilbert schemes $\PHilb_{m,m+n}(\hC)$. (In the above notation, this would correspond to $\PHilb^{m,m+(1,1,...,1)}(\hC)$.)
By Definition \ref{def:parabolic} and Theorem \ref{thm:maincomparison}, $$\PHilb^\bullet(\hC):=\bigsqcup_{m\geq 0}\PHilb_{m,m+n}(\hC) \cong \tM_v := M_v^{\bI,\Lie(\bI)\oplus \cO^n}$$ where $\bI$ is the standard Iwahori of $G_\cK$ and $v$ is associated to $\hC$ as in Theorem \ref{thm:maincomparison}.

The action in {\em loc. cit.} is defined using the fixed point basis and stems from combinatorial and representation-theoretic considerations. It was motivated by both the results of \cite{WebDO} as well as this work, which was in preparation at the time. In what follows, we show that the actions defined in Theorem \ref{thm:convolution} and \cite[Theorem 7.14]{GSV} coincide. We set $\bN_\bI = \Lie(\bI) \oplus \cO^n$.
\begin{theorem}
	\label{thm:GSVcompare}
	The action in \cite[Theorem 7.14]{GSV} on the module $$H^{\C^\times}_*(\bigsqcup_{m\geq 0}\text{PHilb}_{m, m+n}(\hC))[\hbar^{-1}]$$ agrees with the action defined by Theorem \ref{thm:convolution} on 
	$H^{\C^\times}_*(\tM_v)[\hbar^{-1}]$.
\end{theorem}
\begin{proof}
	After inverting $\hbar$, the GKM localization formula implies the fixed point classes are a {\em basis} for the equivariant BM homology. 
	As proven in \cite{WebDO, WebLP, GSV}, the rational Cherednik algebra $\cH_n$ is generated by the Dunkl-Opdam subalgebra, the finite symmetric group $\Sn$, as well as two elements $\tau, \lambda$, which can be identified with
	$\pi,\pi^{-1} \in \Sn^{aff}$ under Suzuki's embedding of $\cH_n$ to the trigonometric Cherednik algebra (see \cite{KN,GSV}). We only need to identify these generators on both sides - the relations they satisfy are proved in \cite{WebDO,WebLP,GSV}.
	
	The Springer action is induced by the following diagram 
	\begin{equation}
	\label{eq:SpringerCoulombDiagram}
	\begin{aligned}
\includegraphics{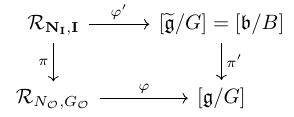}
	\end{aligned}
	\end{equation} and the action of the simple reflections $s_1,\ldots, s_{n-1}$ comes from convolution with $[\cR^{\leq s_i}_{\bN_\bI, \bI}]$, which come about via pullback from classical correspondences on the Steinberg variety. The Springer action of \cite{GSV} is the usual one coming from projections to smaller affine flag varieties which is also defined via pullback from the right. The coincidence of the two is a classical result.
	
	The equivariant cohomology classes $u_i\in H^*_{T}(pt)$ are identified with cap product by the Chern classes $c(\cL_i)$ of the natural line bundles on the affine flag variety. The identification of these two is e.g. \cite[Lemma 5.1.6]{OY}.

	Finally, we need the $\pi=\tau$ and $\pi^{-1}=\lambda$ operators. 
	In \cite[Theorem 5.2]{WebLP} and \cite[Lemma 4.2]{WebDO}, $\tau$ is identified with convolution by the correspondence in Section \ref{sec:corr} corresponding to the space
	$$X_\tau:=\{(V_\bullet, V_\bullet')|V_i=V_{i+1}'\}=\{(g\bI,g'\bI)|g\textbf{t}=g'\}$$ and similarly $\sigma$ is identified with convolution by the correspondence 
	$$X_\sigma:=\{(V_\bullet,V_\bullet')|V_i=V_{i-1}'\}=\{(g\bI,g'\bI)|g'\textbf{t}=g\},$$ where $\textbf{t}$ is the matrix sending  $ e_i\mapsto e_{i+1}, i=1,\ldots, n-1$ and $e_n\mapsto t e_1$ in the standard basis of $\cK^n$. It is immediate that these coincide with the maps $T, \Lambda$ on $H^{\C^\times}_*(\bigsqcup_{m\geq 0}\text{PHilb}_{m, m+n}(\hC))[\hbar^{-1}]$ in \cite[Theorem 7.14]{GSV}.
\end{proof}

\subsection{Example: $(2,2\ell+1)$ torus knots}
\label{sec:SCA2}
We end with the explicit example for the case of $T_{(2,2\ell+1)}$ torus knots. We discuss the module structure of $H^{L_v}_*(\cM_{(2,2\ell+1)})$ and $H^{L_v}_*(\tilde{\cM}_{(2,2\ell+1)}).$

As described in Definition \ref{def:cherednikalg}, the rational Cherednik algebra of $\fg\fl_2$ is the quotient algebra
$$\cH_n=\frac{\C[\hbar,m]\langle x_1,x_2,y_1,y_2\rangle\rtimes\C\fS_2}{\sim}$$
where $\sim$ consists of the relations $[x_i,x_j]=[y_i,y_j]=0$ for all $i, j$, and 
$$[y_i,x_j]=\begin{cases}
-\hbar+m (12) & \text{ if } i=j,\\
-m (12)& \text{ if } i \neq j.
\end{cases}$$
The symmetrizing element is given by $e = \tfrac{1}{2}(1 + (12))$.

The spherical subalgebra has generators given by an $\mathfrak{sl}_2$ triple $$E = -\tfrac{1}{2}e (x_1^2 + x_2^2) e \qquad F = \tfrac{1}{2} e (y_1^2 + y_2^2) e \qquad H = \tfrac{1}{2}e (x_1 y_1 +y_1 x_1 + x_2 y_2 + y_2 x_2) e$$ and a Weyl pair $$X = e (x_1 +x_2)e \qquad Y = e (y_1 + y_2) e$$ transforming in the defining representation of that $\mathfrak{sl}_2$. In particular, the non-zero commutation relations between these generators are those defining $\mathfrak{sl}_2$ and the Weyl algebra $$[E,F] = \hbar H \hspace{1 cm} [H,E] = 2 \hbar E \hspace{1cm} [H,F] = - 2 \hbar F \hspace{1cm} [X,Y] = 2\hbar,$$ and those describing the way $X,Y$ transform under $\mathfrak{sl}_2$ $$[E,X] = [F,Y] = 0 \hspace{1cm} [H,X] = [E,Y] = \hbar X \hspace{1cm} [H,Y] = -[F,X] = -\hbar Y.$$
Denote $W^+ = \tfrac{1}{2}X^2, W^0=-\tfrac{1}{2}(X Y + Y X), W^-=-\tfrac{1}{2} Y^2$, so that the $W^{\pm}, W^0$ transform in the adjoint representation of the above $\mathfrak{sl}_2$. There is one additional relation amongst these operators: $$C_2 = 2(E W^- + F W^+) + H W^0 +m(m-\hbar),$$ where $C_2 = 2(E F + F E) + H^2$ is the quadratic Casimir of the $\mathfrak{sl}_2$ triple and $m$ is a complex parameter.

\begin{theorem}
	The spherical subalgebra, realized as the quantized BFN algebra $\tilde{\cA}_{G,N}^\hbar$ for $G = GL_2$, $N = \Ad\oplus \C^2$, acts via convolution on $H^{\C^\times}_*(\cM_{(2,2\ell+1)})$ for $m = -\tfrac{2\ell+1}{2}\hbar$.  As a module for the spherical rational Cherednik algebra of $\fg\fl_2$, we have $$H^{\C^\times}_*(\cM_{(2,2\ell+1)}) \simeq  e \overline{L}_{(2\ell+1)/2},$$ where $e$ is the $\fS_2$ symmetrizer in rational Cherednik algebra of $\fg\fl_2$ and $\overline{L}_{(2\ell+1)/2}$ is the simple rational Cherednik algebra (at parameter $m = -\tfrac{2\ell+1}{2} \hbar$) module induced from the trivial representation of $\fS_2$.
\end{theorem}
To simplify the expressions below, we will simply write $k$ instead of $2 \ell + 1$. The below does \emph{not} apply when $k$ is even.
\begin{proof}
	First consider the monopole operator $X:= [\cR_{(1,0)}]$. This arises from the orbit $\Gr^{(1,0)}_{GL_2}$, which form a copy of $\P^1$ parameterized by two affine charts given by $$\begin{pmatrix} t&0\\ a_1&1 \end{pmatrix} \hspace{1cm} \begin{pmatrix} 1&a_2\\ 0& t\end{pmatrix}$$
	with transition function $a_2 = \tfrac{1}{a_1}$. There are $G(\cO)$ torus fixed points at the origins of these affine charts, and the coordinate $a_1$ (resp. $a_2$) transforms with weight $\varphi_2-\varphi_1$ (resp. $\varphi_1 - \varphi_2$). Applying Eq. \eqref{eq:SCAaction} yields $$X\ket{A_1, A_2} = \frac{A_1-A_2 - k}{A_1-A_2 - \tfrac{k}{2}}\ket{A_1+1,A_2} + \frac{A_1-A_2}{A_1-A_2 - \tfrac{k}{2}}\ket{A_1,A_2+1}.$$
	
	Similarly, there is the monopole operator $Y:= [\cR_{(0,-1)}]$ coming from the orbit $\Gr^{(0,-1)}_{GL_2}$, which forms a copy of $\P^1$ parameterized by two affine charts $$\begin{pmatrix} 1&0\\ a_1 t^{-1} &t^{-1} \end{pmatrix} \hspace{1cm} \begin{pmatrix} t^{-1}&a_2 t^{-1}\\ 0& 1\end{pmatrix}$$ with transition function $a_2 = \tfrac{1}{a_1}$. The coordinate $a_1$ again transforms with weight $\varphi_1 - \varphi_2$. We find that
	$$Y \ket{A_1, A_2} = \frac{(A_1-A_2)(\tfrac{k}{2} - A_1) \hbar}{A_1-A_2 - \tfrac{k}{2}}\ket{A_1-1,A_2}+\frac{A_2(k-A_1+A_2)\hbar}{A_1-A_2-\tfrac{k}{2}}\ket{A_1, A_2-1}.$$
	
	There are two other monopole operators we will be interested in, namely $E= [\cR_{(1,1)}]$ and $F= -[\cR_{(-1,-1)}]$. They come from $\Gr_{GL_2}^{(1,1)}$ and $\Gr_{GL_2}^{(-1,-1)}$ respectively, both of which are single points. Applying Eq. \eqref{eq:SCAaction} gives $$\begin{array}{c c}
	E \ket{A_1, A_2} = \ket{A_1+1, A_2+1} & F \ket{A_1, A_2} = (\tfrac{k}{2}-A_1)A_2\hbar^2 \ket{A_1-1, A_2-1}
	\end{array}$$
	from which it is straightforward to compute that $H = \hbar - \varphi_1 - \varphi_2$ acts as
	$$H\ket{A_1, A_2} = (A_1+A_2+1-\tfrac{k}{2})\hbar \ket{A_1, A_2}$$ and makes $(E,F,H)$ an $\mathfrak{sl}_2$ triple. The quadratic Casimir $C_2 = 2(E F+F E)+H^2$ acts as $$C_2 \ket{A_1, A_2} = \bigg((A_1-A_2 - \tfrac{k}{2})^2-1\bigg)\hbar^2 \ket{A_1, A_2}.$$ It is straightforward to check that the desired relations are indeed satisfied with $m = -\tfrac{k}{2}\hbar$.
	
	From the action of $\mathfrak{sl}_2$, we see that the classes $\ket{A_1, 0}$ are lowest weight vectors with weights $\nu = (A_1+1-\tfrac{k}{2})\hbar$. Therefore, the homology of this GASF can be expressed as an $\mathfrak{sl}_2$ module as $$H_*^{\C^\times} (\cM_{(2,k)}) = \bigoplus_{A_1 = 0}^{k} \Lambda_{\big(A_1+1-\tfrac{k}{2}\big)\hbar},$$ where $\Lambda_{\nu}$ is the $\mathfrak{sl}_2$ Verma module generated by a lowest weight vector of weight $\nu$. It is also worth noting that $\ket{0, 0}$ is a vacuum vector for the Heisenberg algebra generated by $X,Y$; hence it is the unique spherical rational Cherednik algebra singular vector. We can therefore identify this with the SCA module:$$H_*^{\C^\times} (\cM_{(2,k)}) \simeq e \overline{L}_{k/2},$$ where $e$ is the $\fS_2$ symmetrizer in the rational Cherednik algebra and $\overline{L}_{k/2}$ is the simple rational Cherednik algebra module induced from the trivial representation of $\fS_2$.
\end{proof}

\begin{remark}
	It is worth noting that there is another presentation of the spherical rational Cherednik algebra for $\fg\fl_2$ given by a (different) $\fsl_2$-triple $(\tilde E, \tilde F, \tilde H)$ and the Weyl pair $X,Y$.\footnote{The change of variables is given by $\tilde E = E - \tfrac{1}{4} X^2, \tilde F = F + \tfrac{1}{4} Y^2, \tilde H = H + \tfrac{1}{4}(X Y + Y X)$.} In this presentation, $X,Y$ transform trivially under $\fsl_2$ and the quadratic Casimir of the $\fsl_2$-triple is given by $$
	\tilde C_2 = (m-\tfrac{3}{2}\varepsilon)(m+\tfrac{1}{2}\varepsilon)$$ with no other constraints. In this presentation we find that the homology of our generalized affine Springer fiber is given by $$H_*^{\C^\times} (\cM_{(2,2\ell+1)}) \simeq \C[X] \otimes \Sym^\ell \square,$$ where $\Sym^\ell \square$ is the $\ell+1$ dimensional representation of $\fsl_2$. We can identify $\Sym^\ell \square$ as the cohomology of $\P^{\ell}$, the compactified Jacobian for the $(2,2\ell+1)$ torus knots. This feature was predicted in \cite{ORS}. 
\end{remark}

We now move to the action of the rational Cherednik algebra on the homology of parabolic Hilbert schemes. In particular, we spell out the comparison in Theorem \ref{thm:GSVcompare} between the action given by Theorem \ref{thm:convolution} and \cite{GSV}.
\begin{theorem}
	The action of the rational Cherednik algebra on the homology of $\PHilb^\bullet(\hC)$ given in Theorem \ref{thm:convolution} agrees with the action of \cite[Theorem 7.14]{GSV}.
\end{theorem}
\begin{proof}
	As discussed at the end of Section \ref{sec:hilb}, we describe the action on classes $\ket{A,\sigma}$ associated to the fixed points $\sigma t^A p$ and match the action of the rational Cherednik algebra given in \cite{GSV} by identifying these fixed points with their ``renormalized basis." We start by identifying
	$$\ket{A_1, A_2,()} = \tilde{v}_{(A_1,A_2)} \qquad \ket{A_1,A_2,(12)} = \tilde{v}_{(A_2,A_1)}.$$
	
	The action of the equivariant parameters $\varphi_a$ on the class $\ket{A,\sigma}$ can be easily seen to be
	$$\varphi_1 \ket{A,()} = (\tfrac{k}{2}-A_1) \hbar \ket{A,()} \qquad \varphi_1 \ket{A,(12)} = -A_2\hbar \ket{A,(12)}$$
	and
	$$\varphi_2 \ket{A,()} = -A_2 \hbar \ket{A,()} \qquad \varphi_2 \ket{A,(12)} = (\tfrac{k}{2} - A_1) \hbar \ket{A,(12)},$$
	which translates to (for $A_2 \leq A_1$)
	$$\varphi_1 \tilde{v}_{(A_2, A_1)} = (\tfrac{k}{2}-A_1) \hbar \tilde{v}_{(A_1, A_2)} \qquad \varphi_2 \tilde{v}_{(A_1, A_2)} = - A_2 \hbar \tilde{v}_{(A_2, A_1)}$$
	and (for $A_1 > A_2$)
	$$\varphi_1 v_{(A_2, A_1)} = -A_2 \hbar v_{(A_2, A_1)} \qquad \varphi_2 v_{(A_2, A_1)} = (\tfrac{k}{2}-A_1) \hbar v_{(A_2, A_1)}$$
	we can thus identify $u_1 = \varphi_1$ and $u_2 = \varphi_2$ in \cite[Theorem 7.14]{GSV}.
	
	The action of the transposition $s$ on $\ket{A,\sigma}$ is given by
	$$
	\begin{aligned}
	s \ket{A_1,A_2,()} = \frac{k}{2(A_2-A_1)-k}\ket{A_1,A_2,()}+\frac{2(A_2-A_1)}{2(A_2-A_1)-k}\ket{A_1,A_2,(12)}\\
	s \ket{A_1,A_2,(12)} = \frac{2(A_2-A_1+k)}{2(A_2-A_1)-k}\ket{A_1,A_2,()}-\frac{k}{2(A_2-A_1)-k}\ket{A_1,A_2,(12)}\\
	\end{aligned}.
	$$
	from which it follows that $1-s$ acts as
	$$
	\begin{aligned}
	(1-s) \ket{A_1, A_2,()} = \frac{2(A_1-A_2)}{2(A_2-A_1)-k}(\ket{A_1,A_2,()}-\ket{A_1,A_2,(12)})\\
	(1-s) \ket{A_1,A_2,(12)} = \frac{2(A_1-A_2-k)}{2(A_2-A_1)-k}(\ket{A_1,A_2,(12)}-\ket{A_1,A_2,()})\\
	\end{aligned}
	$$
	In agreement with the action of $1-s$ in \cite[Theorem 7.14]{GSV}.
	
	Finally, the actions of $T$ and $\Lambda$ do not require a fancy localization formula as they correspond to point classes in the affine flag variety. In particular, we find that the excess intersection factors are trivial for $T$:
	$$
	T \ket{A_1,A_2,()} = \ket{A_1+1,A_2,(12)} \qquad T \ket{A_1,A_2,(12)} = \ket{A_1,A_2+1,()}
	$$
	and they are $-A_2\hbar$ (resp. $(\tfrac{k}{2}-A_1) \hbar$) for $\Lambda$ on $\ket{A_1,A_2,()}$ (resp. $\ket{A_1,A_2,(12)}$):
	$$
	\Lambda \ket{A_1,A_2,()} = (\tfrac{k}{2}-A_2)\hbar \ket{A_1,A_2-1,(12)} \qquad \Lambda \ket{A_1,A_2,(12)} = (\tfrac{k}{2}-A_1)\hbar \ket{A_1,A_2-1,()}
	$$
	in agreement with the action of $T,\Lambda$ on $\tilde{v}_{\textbf{a}}$ from \cite[Theorem 7.14]{GSV}.
\end{proof}

\subsection*{Acknowledgements}
The authors thank Tudor Dimofte and Eugene Gorsky for discussions that initiated this project as well as for comments and for urging us to publish our results. We also thank Justin Hilburn, Joel Kamnitzer, and Alex Weekes for sharing their preliminary results in \cite{HKW}, and Jos\'e Simental Rodriguez and Minh-Tam Trinh for comments on a draft of this paper. N.G. would like to thank Ingmar Saberi and Jos\'e Simental Rodriguez for useful conversations.

Part of this work was carried out during the KITP program Quantum Knot Invariants and Supersymmetric Gauge Theories (fall 2018), supported by NSF Grant PHY-1748958.

\newpage
\appendix
\section{}
\label{appendix}
\subsection{Restriction with supports}
\label{sec:rws}
In this section, we define the restriction with support homomorphisms used in the definition of $p^*$ in Theorem \ref{thm:convolution}. We follow \cite{BFN}. 
\begin{definition}
	\label{def:rws}
	Suppose we have a Cartesian diagram of ind-varieties
	\begin{center}
\includegraphics{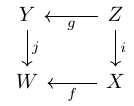}
	\end{center}
	and let $A,B$ be (possibly unbounded) complexes of constructible sheaves on $W,X$. Then 
	suppose we are given $\varphi\in \Hom(A,f_*B)\cong \Hom(f^*A,B)$. Define the morphism of complexes 
	$$j^! A\to j^!f_*f^* A\cong g_*i^!f^* A\to g_*i^! B$$ as the composition of the adjunction map and $\varphi$.
	This induces a map on hypercohomology: $$H^*(Y,j^!A)\to H^*(Z,i^!B).$$ We will call this map ``restriction with supports". 
\end{definition}

\begin{remark}
	Suppose we have a Cartesian diagram of varieties
	\begin{center}
\includegraphics{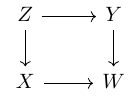}
	\end{center}
	If the first arrow is a regular embedding, let $N$ be the pullback to $Z$ of the normal bundle $N_{X/W}$. There is a specialization map 
	$$\sigma: H_*(Y)\to H_*(N), \; [V]\mapsto [C_{(C\cap Z)/V}].$$ The usual refined intersection map/pullback with support is defined  as the composition $H_*(Y)\to H_*(N)\to H_*(Z)$.
\end{remark}

\subsection{Finite-dimensional approximation}
In many parts of this paper, we consider equivariant complexes on infinite-dimensional ind-varieties, in particular $\cR, \cT$ and $N_\cO$ and their substacks. We refer the reader to \cite[Section 2]{BFN} for more precise definitions in the first two cases, and in the latter case define $$D^b_{\tG_\cO}(N_\cO)$$ to be the direct limit over the finite-dimensional approximations to $N_\cO$ given by $N_\cO/t^iN_\cO$. The degree shifts such as $[-2\dim N_\cO]$ we use, are also to be understood as in \cite[Section 2]{BFN}.
\subsection{Associativity}
In this section, we prove that the convolution product defined in Theorem \ref{thm:convolution} is associative. We follow the proof of associativity of the convolution product of $\tilde{\cA}^\hbar_{G,N, \bP, \bN}:= \tilde{\cA}^\hbar_{\bP, \bN}$ in \cite[Section 3]{BFN} and the rough outline in the preprint \cite{HKW} .

\begin{lemma}
	\label{lemma:associativity}
	The convolution product defined in Theorem \ref{thm:convolution} is associative.
\end{lemma}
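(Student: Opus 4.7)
The plan is to adapt the proof of the associativity of the convolution product on $\tilde{\cA}^\hbar$ itself, as given in \cite[Section 3]{BFN}, to the module-level associativity $r_1 \star (r_2 \star \alpha) = (r_1 \cdot r_2) \star \alpha$ for $r_1, r_2 \in \tilde{\cA}^\hbar$ and $\alpha \in H^{L_v}_*(M_v)$. The strategy is to realize both sides as a single pullback--pushforward performed on an auxiliary ``double convolution'' ind-scheme sitting above both iterated constructions, so that the associativity becomes a formal consequence of base change and the functoriality of the restriction-with-supports map of Section \ref{sec:rws}.

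Concretely, I would introduce
$$\cP^{(2)} := \bigl\{(g_1, g_2, s) \in (\tG^\cO_\cK \rtimes \C^\times_{rot})^{\times 2} \times N_\cO : g_2.s \in N_\cO,\ (g_1 g_2).s \in N_\cO\bigr\},$$
equipped with a projection $p^{(2)} : \cP^{(2)} \to \cR \times \cR \times N_\cO$ sending $(g_1, g_2, s) \mapsto ([g_1], g_1 g_2 . s, [g_2], g_2 . s, s)$, a quotient map $q^{(2)}$ by the appropriate two-step action of $(\tG_\cO \rtimes \C^\times_{rot})^{\times 2}$, and a multiplication map $m^{(2)} : [g_1, g_2, s] \mapsto g_1 g_2 . s \in N_\cO$. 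There are natural forgetful/multiplication maps from $\cP^{(2)}$ down to $\cP$ (used to iterate the module action of Theorem \ref{thm:convolution}) as well as to a Beilinson--Drinfeld-style twisted product $(\cR \tilde{\times} \cR) \times N_\cO$ (underlying the BFN convolution product on $\tilde{\cA}^\hbar$). The key geometric input is that both of the iterated constructions --- first act by $r_2$ then by $r_1$, versus first take the product $r_1 \cdot r_2$ in $\tilde{\cA}^\hbar$ and then act --- fit into diagrams of Cartesian squares over $\cP^{(2)}$. Using the functoriality of $p^*$ under composition of Cartesian squares together with proper base change applied to each pushforward $(m \circ q)_*$, both $r_1 \star (r_2 \star \alpha)$ and $(r_1 \cdot r_2) \star \alpha$ can then be identified with $(m^{(2)} \circ q^{(2)})_* (p^{(2)})^* (r_1 \otimes r_2 \otimes \alpha)$, yielding the claim.

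The main obstacle is the careful bookkeeping of the equivariance structures on $\cP^{(2)}$: the two copies of $\tG_\cO \rtimes \C^\times_{rot}$ act on $(g_1, g_2)$ in the twisted way familiar from \cite{BFN}, and these must be matched with both the module action on $\cV^v$ (through the $L_v$-equivariance) and the convolution action on $\cR$ used to define the product on $\tilde{\cA}^\hbar$. Moreover, each of these constructions must take place at the level of the finite-dimensional approximations of $N_\cO$ recalled above, so one has to verify that the Cartesian squares and base-change isomorphisms descend compatibly across the inductive limit. Once this setup is in place the associativity is purely diagrammatic, in direct parallel with the BFN argument for the algebra product itself.
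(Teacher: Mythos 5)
Your proposal is correct and follows essentially the same route as the paper: the space $\cP^{(2)}$ you introduce is exactly the object $\boxed{1}$ in the paper's proof (with the same twisted $(\tG_\cO\rtimes\C^\times_{rot})^{\times 2}$-action), and the identification of both iterated operations with a single push--pull through this double convolution space is precisely what the paper establishes by checking commutativity of each square in its large diagram via Cartesian cubes, compatibility of the two restriction-with-supports maps, and proper base change.
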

\begin{proof}
	We consider the following commutative diagram, which is a
	`product' of the upper row of \eqref{eq:convolutiondiagram} and the appropriate version of \cite[(3.2)]{BFN}:
\begin{equation}
	\label{eq:associativitydiagram}
	\begin{aligned}
\includegraphics{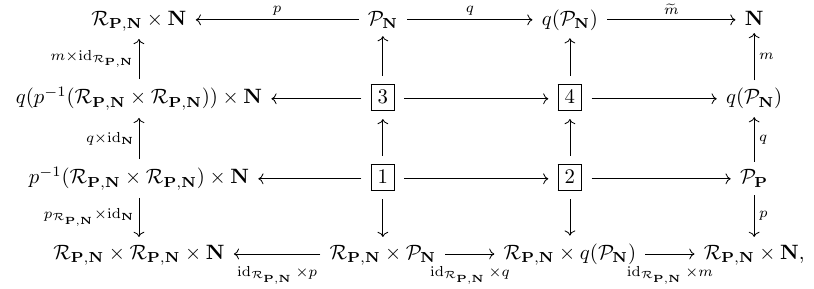}
	\end{aligned}
	\end{equation}
	where we have defined
	\begin{equation*}
	\boxed{1} = \{ (g_1,g_2,v')\in \tG_{\cK}^{\cO}\rtimes \C^\times_{rot}\times \tG_{\cK}^{\cO}\rtimes \C^\times_{rot}\times  \bN \mid
	g_2v', g_1g_2v' \in \bN \},
	\end{equation*}
	and $\boxed{2}$, $\boxed{3}$, $\boxed{4}$ are quotients of
	$\boxed{1}$ by $1\times \tbP\rtimes \C^\times_{rot}$, $\tbP \rtimes \C^\times_{rot}\times 1$, $\tbP \rtimes \C^\times_{rot}\times
	\tbP\rtimes \C^\times_{rot}$ respectively. Here $\tbP \rtimes \C^\times_{rot}\times \tbP \rtimes \C^\times_{rot}$ acts on $\boxed{1}$ by
	\begin{equation*}
	(h_1,h_2)\cdot (g_1,g_2, v') = (g_1 h_1^{-1}, h_1 g_2 h_2^{-1},
	h_2v')\quad\text{for $(h_1,h_2)\in \tbP\times \tbP$}.
	\end{equation*}
	The horizontal and vertical arrows from $\boxed{1}$, $\boxed{4}$ are
	given by
	\begin{equation}\label{eq:14}
	%       \footnotesize
	\xymatrix@C=10pt{(g_1, [g_2, v'], v') &
		(g_1,g_2, v')\in\boxed{1} \ar@{|->}[l]^{p_1}
		\ar@{|->}[d]^{p_2} \\
		& ([g_1,g_1g_2v'],g_2,v'),
	} \quad
	\xymatrix@C=10pt{
		[g_1g_2, v'] &
		\\
		\boxed{4}\ni [g_1, [g_2,v']] \ar@{|->}[u] \ar@{|->}[r] &
		[g_1, g_2v'].
	}
	\end{equation}
	Arrows from $\boxed{2}$, $\boxed{3}$ are given by the obvious
	modification of above ones, as $\boxed{1}\to\boxed{3}$, etc.\ are
	fiber bundles. Also, $p_{\cR_{\bP, \bN}}$ is as defined in \cite{BFN}, i.e. 
	$$(g_1,[g_2,s])\mapsto ([g_1,g_2s],[g_2,s]).$$
	
	Let $\alpha\in H^{L_v}(M^{\bP, \bN}_v)$ and $c_1, c_2\in \tilde{\cA}_{\bP, \bN}^{\hbar}$. The convolution product $c_2\star \alpha$ is given by
	applying the construction in Theorem \ref{thm:convolution} (i.e. induced homomorphisms in BM homology) to the bottom row from left to
	right, and $c_1\star (c_2\star \alpha)$ is then  obtained by going up in the rightmost column. Similarly
	$(c_1\star c_2)\star \alpha$ is given by going up the leftmost column using the construction in \cite{BFN} and
	then from left to right along the top row.
	
	Therefore the associativity of the convolution product is the statement that the induced morphisms $$-\star(-\star -), (-\star -)\star -: \tilde{\cA}_{\bP, \bN}^{\hbar}\otimes \tilde{\cA}_{\bP, \bN}^{\hbar} \otimes H_*^{L_v}(M_v^{\bP, \bN})\to H_*^{L_v}(M_v^{\bP, \bN})$$ are equal. 
	This would follow commutativity of the associated ``large square" in BM homology. (It might be helpful for the reader to recall the usual diagram for associativity of an algebra action).

	We will in fact prove that each square is commutative after applying BM homology.

	%%%%%%%%%%%%%%%%%%%%%%%%%%%%%%%%%%%%%%%%%%%%%%%%    
	
	Let us first look at the bottom left square. We can extend the
	square to a cube as
	\begin{equation*}
	%    \footnotesize
	\xymatrix@!C=100pt{
		& \tG_{\cK}^{\cO}\rtimes \C^\times_{rot}\times\cR_{\bP, \bN}\times \bN \ar@{->}'[d]^{p'\times\id_{ \bN}}[dd]
		& & \tG_{\cK}^{\cO}\rtimes \C^\times_{rot}\times \cP_{\bN} \ar@{->}[dd]^P\ar@{->}[ll]_{\id_{\tG_{\cK}^{\cO}\rtimes \C^\times_{rot}}\times p}
		\\
		p^{-1}(\cR_{\bP, \bN}\times\cR_{\bP, \bN})\times \bN \ar@{->}[ur]\ar@{->}[dd]
		& & \boxed{1} \ar@{->}[ur]\ar@{->}[ll]\ar@{->}[dd]
		\\
		& \cT_{\bP, \bN}\times\cR_{\bP, \bN}\times \bN
		& & \cT_{\bP, \bN} \times \cP_{\bN} \ar@{->}'[l]_{\id_{\cT_{\bP, \bN}}\times p}[ll]
		\\
		\cR_{\bP, \bN}\times\cR_{\bP, \bN}\times \bN\ar@{->}[ur]
		& & \cR_{\bP, \bN}\times \cP_{\bN} \ar@{->}[ll]\ar@{->}[ur]
	}
	\end{equation*}
	Arrows from spaces in the front square to those in the rear square are
	closed embeddings. Arrows in the rear square are as indicated, where we have defined $P\colon \tG_{\cK}^{\cO}\rtimes \C^\times_{rot}\times \cP_{\bN}\to \cT_{\bP, \bN} \times
	\cP_{\bN}$ by $(g_1,g_2,v')\mapsto ([g_1,g_1g_2v'],g_2,v')$, just as the downward arrow from $\boxed{1}$ above.
	
	The top, right, left and bottom faces of the cube are Cartesian and we have the isomorphisms
	\begin{equation*}
	P^*(\omega_{\cT_{\bP, \bN}}\boxtimes \pi_1^!\cF^v_{\bP, \bN})\cong
	\omega_{\tG_\cK^{\cO}\rtimes \C^\times_{rot}}\boxtimes \pi_1^!\cF^v_{\bP, \bN} 
	\end{equation*}
	\begin{equation*}
	(p'\times \id_{\bN})^*\omega_{\cT_{\bP, \bN}}\boxtimes \omega_{\cR_{\bP, \bN}}\boxtimes \cF^v_{\bP, \bN}\cong
	\omega_{\tG_\cK^\cO\rtimes \C^\times_{rot}}\boxtimes \omega_{\cR_{\bP, \bN}}\boxtimes \cF^v_{\bP, \bN}.
	\end{equation*}
	This gives us two pullbacks with supports 
	$$H^*_{\tbP \rtimes \C^\times_{rot} \times \tbP \rtimes \C^\times_{rot}}(\cR_{\bP, \bN}\times \cP_{\bN},\omega_{\cR_{\bP, \bN}}\boxtimes \pi_1^!\cF^v_{\bP, \bN})\to
	H^*_{\tbP\rtimes \C^\times_{rot}\times \tbP\rtimes \C^\times_{rot}}(\boxed{1},\omega_{\tG_\cK^\cO\rtimes \C^\times_{rot}}\boxtimes \pi_1^!\cF^v_{\bP, \bN})$$ 
	and
	$$H^*_{\tbP \rtimes \C^\times_{rot} \times \tbP\rtimes \C^\times_{rot}}(p_{\cR_{\bP, \bN}}^{-1}(\cR_{\bP, \bN}\times \cR_{\bP, \bN})\times \bN,\omega_{\cR_{\bP, \bN}}\boxtimes \omega_{\cR_{\bP, \bN}}\boxtimes \cF^v_{\bP, \bN})\to$$
	$$H^*_{\tbP \rtimes \C^\times_{rot}\times \tbP\rtimes \C^\times_{rot}}(\boxed{1},\omega_{\tG_\cK^\cO}\boxtimes \pi_1^!\cF^v_{\bP, \bN}).$$

	We claim that these are the same homomorphism. Consider
	$\omega_{\cT_\bP}\boxtimes\omega_{\cR_\bP}\boxtimes \cF^v_{\bP, \bN}$ on $\cT_{\bP, \bN}\times \cR_{\bP, \bN}\times \bN$, and consider the
	pull-backs of $\omega_{\cT_{\bP, \bN}}$ and $\omega_{\cR_{\bP, \bN}} \boxtimes \cF^v_{\bP, \bN}$ separately.
	Let us first consider $\omega_{\cR_{\bP, \bN}}\boxtimes \cF^v_{\bP, \bN}$. We have
	
	\begin{center}
\includegraphics{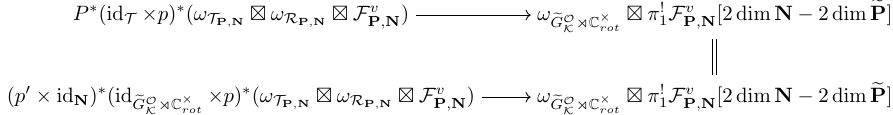}
	\end{center}
	
	by following left, top arrows and bottom, right arrows in the rear square. They are the
	same, as both are essentially given by the homomorphism
	$$p^*\omega_{\cR_{\bP, \bN}}\boxtimes \cF^v_{\bP, \bN} \to \pi_1^!\cF^v_{\bP, \bN}.$$
	Next consider $\omega_{\cT_{\bP, \bN}}$. The $\cT_{\bP, \bN}$-component of
	\(
	(\id_{\cT_{\bP, \bN}}\times p)\circ P =
	(\id_{\tG_{\cK}^{\cO}\rtimes \C^\times_{rot}}\times p)\circ(p'\times\id_{\cR_{\bP, \bN}})
	\)
	(which is $(g_1,g_2, s)\mapsto [g_1, g_1g_2.s]$)
	factors as
	\[
	\tG_{\cK}^{\cO}\rtimes \C^\times_{rot}\times \cP_{\bN}
	\xrightarrow{\id_{\tG_{\cK}^{\cO}\rtimes \C^\times_{rot}}\times\Pi'}
	\tG_{\cK}^{\cO}\rtimes \C^\times_{rot} \times \bN \xrightarrow{p'_{\cT_{\bP, \bN}}}\cT_{\bP, \bN},
	\]
	where $\Pi'\colon \cP_{\bN}\to \bN$ is $(g_2, s)\mapsto g_2.s$. So we have
	\begin{equation*}
	((\id_{\cT_{\bP, \bN}}\times p)\circ P)^*(\omega_{\cT_{\bP, \bN}}\boxtimes\omega_{\cR_{\bP, \bN}}\boxtimes \cF^v_{\bP, \bN})
	\cong \omega_{\tG_{\cK}^{\cO}\rtimes \C^\times_{rot}}\boxtimes \pi_1^!\cF^v_{\bP, \bN}
	[2\dim \bN-2\dim \tbP].
	\end{equation*}
	The two restriction with supports homomorphisms  from above constructed by going along left, top arrows and
	bottom, right arrows in the rear square are thus identical. This completes the proof of the commutativity of the bottom left
	square.
	
	Since $\tilde q\colon \cP_{\bN}\to q(\cP_{\bN})$
	is a fiber bundle with fibers $\tbP\rtimes \C^\times_{rot}$, commutativity for squares
	involving $q$ is obvious. 
	
	%%%%%%%%%%%%%%%%%%%%%%%%%%%%%%%%%%%%%%%%%%%%

	Let us finally consider the right bottom
	square. We extend it to a cube:
	\begin{equation*}
	%    \footnotesize
	\xymatrix@!C=100pt{
		& \tG_{\cK}^{\cO}\rtimes \C^\times_{rot}\times q(\cP_{\bN}) \ar@{->}'[d]^{P'}[dd]
		\ar@{->}[rr]_{\id_{\tG_{\cK}^{\cO}\rtimes \C^\times_{rot}}\times\tilde m}
		&& \tG_{\cK}^{\cO}\rtimes \C^\times_{rot}\times \bN \ar@{->}[dd]^p
		\\
		\boxed{2} \ar@{->}[ur]\ar@{->}[rr]\ar@{->}[dd] && \cP_{\bN}
		\ar@{->}[dd] \ar@{->}[ur]
		&
		\\
		& \cT_{\bP, \bN}\times q(\cP_{\bN}) \ar@{->}'[r][rr]_{\id_{\cT_{\bP, \bN}}\times\tilde m}
		&& \cT_{\bP, \bN}\times \bN
		\\
		\cR_{\bP, \bN}\times q(\cP_{\bN}) \ar@{->}[rr]\ar@{->}[ur]
		&& \cR_{\bP, \bN}\times \bN\ar@{->}[ur]
	}
	\end{equation*}
	Arrows from the front to rear are closed embeddings. The map $P'\colon  \tG_{\cK}^{\cO}\rtimes \C^\times_{rot}\times q(\cP_{\bN}) \to \cT_{\bP, \bN}\times q(\cP_{\bN})$ is given by 
	$$(g_1, [g_2,s])\mapsto ([g_1, g_1g_2.s], [g_2, s]).$$
	
	The left and right faces of the cube are cartesian, and the commutativity of the rear square in the cube is enough to conclude that  the corresponding proper pushforwards give the same map.
	
	Finally, the commutativity of the induced maps in the right top square is clear, as it involves
	only pushforward homomorphisms. In particular, the whole large square is commutative.
\end{proof}

\begin{lemma}
	\label{lemma:unit}
	The class of $[1]\in H_*^{\tbP\rtimes \C^\times}(\cR_{\bP, \bN})$ acts by the identity on $H^{L_v}_*(M_v^{\bP, \bN})$.
\end{lemma}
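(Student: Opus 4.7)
The plan is to unpack the convolution diagram from Theorem \ref{thm:convolution} in the special case $c = [\cR^{\leq 0}]$ and verify that every geometric map in the composition collapses to an identity on the relevant copy of $\cV^v$. The class $[1]$ is, by definition, the fundamental class of $\cR^{\leq 0} = \pi^{-1}([1])$, where $\pi: \cR \to \Gr_G$; because the condition $g^{-1}.s \in N_\cO$ is automatic when $g \in \tG_\cO \rtimes \C^\times_{rot}$, this fiber is canonically $N_\cO$ with its natural $\tG_\cO \rtimes \C^\times_{rot}$-action, and $[1]$ is the fundamental class of this copy of $N_\cO$.

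First I would compute $p^{-1}(\cR^{\leq 0} \times \cV^v)$ inside $\tG_{\cK}^\cO \rtimes \C^\times_{rot} \times N_\cO$. Since $p(g,s) = ([g], g.s, s)$, the constraint $([g], g.s) \in \cR^{\leq 0}$ forces $g \in \tG_\cO \rtimes \C^\times_{rot}$, and intersecting with the support of $\cF_v$ in the second factor cuts out $s \in \cV^v$, yielding
\[ p^{-1}(\cR^{\leq 0} \times \cV^v) \;\cong\; (\tG_\cO \rtimes \C^\times_{rot}) \times \cV^v. \]
Since $\cV^v$ is $(\tG_\cO \rtimes \C^\times_{rot})$-stable, the quotient $q$ (with convention $h.(g,s) = (gh^{-1}, h.s)$) identifies $q(p^{-1}(\cR^{\leq 0} \times \cV^v))$ with $\cV^v$ via the canonical representative $[1, g.s]$, and the multiplication map $m: [g,s] \mapsto g.s$ becomes the identity on $\cV^v$ under this identification.

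Next I would verify that the restriction-with-supports map $p^*$ from Definition \ref{def:rws}, applied to $[\cR^{\leq 0}] \otimes \alpha$, produces the class on $p^{-1}(\cR^{\leq 0} \times \cV^v)$ that corresponds to $\alpha$ under the above identification. This is essentially tautological: the fundamental class $[\cR^{\leq 0}]$ corresponds to the unit in $H^0$ of the fiber $N_\cO$, so its pullback along $p$ restricted to the trivial stratum introduces no nontrivial factor, and the external product with $\alpha$ descends under the $\tG_\cO \rtimes \C^\times_{rot}$-quotient to a cycle representing $\alpha$ on $\cV^v$. Proper pushforward along $m \circ q \cong \id_{\cV^v}$ then gives $[\cR^{\leq 0}] \star \alpha = \alpha$.

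The main potential obstacle is the bookkeeping of the degree shifts by $2 \dim N_\cO$ and $2 \dim(\tG_\cO \rtimes \C^\times_{rot})$ in the conventions of Theorem \ref{thm:convolution}, together with checking that the isomorphism $\pi_1^! \cF_v \cong \pi_2^! \cF_v$ restricts to the identity on the trivial stratum; both reduce to the fact that on $\{1\} \times \cV^v \subset (\tG_\cO \rtimes \C^\times_{rot}) \times \cV^v$ the two projections agree, so no substantive intersection-theoretic content enters beyond the collapse of each cartesian square in the convolution diagram when $g$ is forced into $\tG_\cO \rtimes \C^\times_{rot}$.
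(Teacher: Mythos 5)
Your proposal is correct and follows essentially the same route as the paper: both arguments restrict to the fiber of $\cR$ over $\Gr_G^{\leq 0}$, where the support condition forces $g\in \tG_\cO\rtimes\C^\times_{rot}$, so that $p$, $q$, and $m$ all collapse to identities on (a copy of) $N_\cO$ resp.\ $\cV^v$; the paper packages the tautological behaviour of the restriction-with-supports step via proper base change applied to the commutative diagram of inclusions, which is exactly the rigorous form of your ``no substantive intersection-theoretic content enters'' remark.
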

\begin{proof}
	Consider the following diagram.
	\begin{center}
\includegraphics{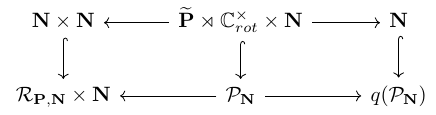}
	\end{center}
	The vertical maps are the natural inclusions (where we include $\bN\hookrightarrow \cR_{\bP, \bN}$ as the fiber over $\Fl_\bP^{\leq 1}$).
	Since $[1]\otimes  c$ is the pushforward of $1\otimes c$ along the left inclusion, by proper base change, $q_*p^*([1]\otimes  c)$ is given by the pushforward along right vertical embedding $$ \bN \to q(\cP_{\bN}).$$ Composing with $m: q(\cP_{\bN})\to \bN$, this embedding becomes the identity map on $ \bN$, so we must have 
	$m_*q_*p^*([1]\otimes  c)=c$.
\end{proof}

\bibliographystyle{plainnat}

\end{document}